\documentclass{amsart}

\usepackage{amsmath}
\usepackage{amssymb}
\usepackage{MnSymbol}
\usepackage{dsfont}
\usepackage{epsfig}  		
\usepackage{xypic}
\usepackage{bbm}
\usepackage{hyperref}
\usepackage{eucal}
\usepackage{mathrsfs}

\usepackage{xcolor}

\newtheorem{mthm}{Theorem}

\newtheorem{mcor}[mthm]{Corollary}

\newtheorem{thm}{Theorem}[section]
\newtheorem{prop}[thm]{Proposition}
\newtheorem{lem}[thm]{Lemma}
\newtheorem{cor}[thm]{Corollary}

\theoremstyle{definition}
\newtheorem{definition}[thm]{Definition}

\theoremstyle{remark}

\newtheorem{remark}[thm]{Remark}

\numberwithin{equation}{section}

\hyphenation{mani-fold}
\hyphenation{homo-geneous}
\hyphenation{homo-mor-phism}
\hyphenation{Rie-mannian}
\hyphenation{in-variant}
\hyphenation{to-po-lo-gy}
\hyphenation{in-variant}
\hyphenation{a-sphe-ri-cal}
\hyphenation{multi-pli-cation}

\newcommand{\CC}{\mathds{C}}
\newcommand{\RR}{\mathds{R}}

\newcommand{\PP}{\mathds{P}} 

\newcommand{\id}{\mathrm{id}}
\newcommand{\ad}{\mathrm{ad}}
\newcommand{\Ad}{\mathrm{Ad}}

\newcommand{\Iso}{\group{Iso}}

\newcommand{\GL}{\group{GL}}
\newcommand{\PGL}{\group{PGL}}
\newcommand{\SL}{\group{SL}}

\newcommand{\SO}{\group{SO}}

\newcommand{\Uni}{\mathrm{U}}
\newcommand{\uni}{\mathrm{u}}

\newcommand{\group}{\mathrm} 

\newcommand{\ag}[1]{\boldsymbol{#1}} 
\newcommand{\ra}[1]{\text{\textsf{\itshape #1}}} 


\newcommand{\aH}{\ag{H}}
\newcommand{\aU}{\ag{U}}
\newcommand{\aT}{\ag{T}}
\newcommand{\aG}{\ag{G}}


\newcommand{\raA}{\ra{A}}

\newcommand{\raT}{\ra{T}}
\newcommand{\raG}{\ra{G}}

\newcommand{\ac}[1]{\overline{#1}^{\rm z}}

\renewcommand{\rho}{\varrho}
\renewcommand{\tilde}{\widetilde}
\renewcommand{\bar}{\overline}

\renewcommand{\epsilon}{\varepsilon}

\newcommand{\Z}{\mathrm{Z}}

\newcommand{\zsp}{\mathbf{0}} 
\newcommand{\met}{\langle\cdot,\cdot\rangle}
\newcommand{\one}{\{e\}}

\newcommand{\spl}{\mathrm{s}}

\DeclareMathOperator{\im}{\mathrm{im}}

\newcommand{\frg}{\mathfrak{g}}
\newcommand{\frh}{\mathfrak{h}}
\newcommand{\fra}{\mathfrak{a}}
\newcommand{\frb}{\mathfrak{b}}

\newcommand{\frn}{\mathfrak{n}}
\newcommand{\frz}{\mathfrak{z}}

\newcommand{\frw}{\mathfrak{w}}

\newcommand{\frj}{\mathfrak{j}}

\newcommand{\fru}{\mathfrak{u}}
\newcommand{\frs}{\mathfrak{s}}
\newcommand{\frl}{\mathfrak{l}}

\newcommand{\frr}{\mathfrak{r}}
\newcommand{\frv}{\mathfrak{v}}

\newcommand{\zen}{\mathfrak{z}}

\newcommand{\inv}{\mathrm{inv}}

\newcommand{\p}{\mathsf{p}}

\newcommand{\g}{\mathrm{g}}
\newcommand{\I}{\mathrm{I}}




\newcommand{\wolf}[1]{}
\newcommand{\oliver}[1]{}

\setcounter{tocdepth}{1}

\begin{document}

\title[Rigidity of compact pseudo-Riemannian homogeneous spaces]{Rigidity of compact pseudo-Riemannian homogeneous spaces for solvable Lie groups}

\author[Baues]{Oliver Baues}
\address{Oliver Baues, Mathematisches Institut,
Georg-August-Universit\"at G\"ottingen,
Bunsenstr.~3-5, 
37073 G\"ottingen, Germany}
\email{obaues@uni-math.gwdg.de}

\author[Globke]{Wolfgang Globke}
\address{Wolfgang Globke, School of Mathematical Sciences, The University of Adelaide, SA 5005, Australia}
\email{wolfgang.globke@adelaide.edu.au}


\subjclass[2010]{Primary 53C50; Secondary 53C30, 22E25, 57S20, 53C24}

\begin{abstract}
Let $M$ be a compact connected pseudo-Riemannian manifold
on which a solvable connected Lie group $G$ of isometries acts
transitively.
We show that $G$ acts almost freely on $M$ and 
that the metric on $M$ is induced by a bi-invariant
pseudo-Riemannian metric on $G$.
Furthermore, we show that the identity
component of the isometry group of $M$ coincides with $G$.
\end{abstract}

\maketitle

%
%

\tableofcontents


\section{Introduction and main results}

%

As exemplified by D'Ambra and Gromov's programmatic survey
\cite{DAG},
there has been a considerable interest in transformation groups
of manifolds with rigid geometric structures, among which
pseudo-Riemannian metrics, along with conformal and affine
structures, feature prominently.
In this context, isometry groups are typically assumed to be
non-compact in order to allow for sufficiently rich geometric and
dynamical properties,
whereas the manifolds are compact to ensure the geometries are
``almost classifiable'' in the words of \cite{DAG}.

Beside the Riemannian case, the \emph{Lorentzian} manifolds
(of metric signature $1$) constitute the most prominent class
of pseudo-Riemannian manifolds.
Zimmer \cite{zimmer4} studied semisimple Lie groups acting on
compact Lorentzian manifolds.
Adams and Stuck \cite{AS1} and Zeghib \cite{zeghib}
independently refined Zimmer's results into a classification
of the isometry groups of compact Lorentzian manifolds.
The case of higher signature pseudo-Riemannian metrics seems
much more difficult.

In this context, the most fundamental geometric objects are
\emph{homogeneous} manifolds, that is,
those admitting a transitive action by a group of isometries.
A classification of compact Lorentzian homogeneous spaces
was given by Zeghib \cite{zeghib}.
In a recent article, Quiroga-Barranco \cite{quiroga}
investigated transitive simple Lie groups
of isometries on compact pseudo-Riemannian manifolds
of arbitrary signature.
In the present article, we study transitive isometric
actions of \emph{solvable} Lie groups.

\subsection{The main results}

Let $M$ be a compact pseudo-Riemannian manifold,
and let $G$ be a connected solvable Lie group of isometries acting
transitively on $M$.

\begin{mthm}\label{mthm_lattice}
$G$ acts almost freely on $M$.
\end{mthm}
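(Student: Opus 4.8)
The plan is to prove that $\frh := \mathrm{Lie}(H) = 0$, where $H = G_o$ is the (closed) stabilizer of a point $o \in M$. Three facts are available from the start. Since $G$ acts effectively by isometries, an isometry fixing $o$ whose differential at $o$ is the identity of $T_oM$ must be $\mathrm{id}_M$, so the isotropy representation $\lambda \colon H \to \OO(T_oM) \cong \OO(p,q)$ is injective; hence $\d\lambda \colon \frh \hookrightarrow \sso(p,q)$ is injective and $\frh$ contains no nonzero ideal of $\frg$. Since $M$ is compact, the metric volume is a finite $G$-invariant measure, so every one-parameter subgroup of $G$ acts on $M$ with almost every point recurrent (Poincar\'e). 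Finally, replacing $G$ by its closure in $\Iso(M)$ — again connected, solvable, transitive, and an almost free action of the closure restricts to one of $G$ — we may assume $G$ is closed in $\Iso(M)$; then $G$ acts properly and freely on the bundle $F(M) \to M$ of pseudo-orthonormal frames, the orbit map $g \mapsto g\cdot f_0$ identifies $G$ with the closed submanifold $G\cdot f_0$, and this orbit fibres over the compact base $M$ with fibre the subgroup $\lambda(H) \subseteq \OO(p,q)$, which is therefore closed, with identity component $\lambda(H^0)$ (also closed).

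Assume $\frh \ne 0$. Then $\lambda(H^0)$ is a nontrivial closed connected solvable subgroup of $\OO(p,q)$, and I split into two cases. If $\lambda(H^0)$ is not compact, then, being closed, connected and solvable, it contains a \emph{non-elliptic} one-parameter subgroup, i.e. there is $Z \in \frh\setminus\{0\}$ with $A := \d\lambda(Z)$ not elliptic, so that $\{\exp tA : t\in\RR\}$ is a closed, non-compact subgroup of $\OO(p,q)$. (When $\frh$ meets the nilradical $\frn$ of $\frg$ one may take $0\ne Z \in \frh\cap\frn$; then $\ad_Z$ is nilpotent on $\frg$ and $A$ is a nonzero nilpotent in $\sso(p,q)$.) Since $\exp(tZ)\in H$ fixes $o$, the $\{\exp(tZ)\}$-orbit of the base frame $f_0$ is the curve $t\mapsto f_0\cdot\exp(tA)$, which lies in the single fibre $F(M)_o$ and is closed and non-compact: the flow of $Z$ on $F(M)$ runs off to infinity in the fibre over $o$.

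On the other hand, the flow of $Z$ on $M$ preserves the finite volume, so I may choose $x = gH\in M$ and $t_n\to\infty$ with $\exp(t_nZ)\cdot x\to x$. Since $G$ acts properly and freely on $F(M)$, the orbit map $G\to G\cdot f_x$ is a homeomorphism onto the closed orbit through a frame $f_x$ over $x$; hence, if $\exp(t_nZ)\cdot f_x$ had a subsequential limit in $F(M)$, the corresponding subsequence of $\exp(t_nZ)$ would converge in $G$, which — as $t_n\to\infty$ — forces $\overline{\{\exp(tZ)\}}$ to be compact, so $\ad_Z$ is semisimple with purely imaginary spectrum, contradicting $A = \d\lambda(Z)\ne 0$ non-elliptic. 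Thus the contradiction is complete once one knows that $\exp(t_nZ)\cdot f_x$ stays in a compact subset of $F(M)$, equivalently that the derivative cocycle $D(\exp(t_nZ))|_x \in \OO(p,q)$ remains bounded along the recurrent orbit of $x$.

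This boundedness is the hard part, and it is a genuinely pseudo-Riemannian difficulty: a finite-volume isometric flow may have an unbounded derivative cocycle, so recurrence on $M$ need not lift to $F(M)$. Since the flow is of algebraic origin — left translation by $\{\exp(tZ)\}$ on $G/H$, with the cocycle read off from $\Ad(\exp t_nZ)$ acting on $\frg/\frh$ and recurrence of $x = gH$ meaning $\exp(t_n\Ad(g^{-1})Z)\to H$ in $G/H$ — I would control it directly: the operators $\Ad(\exp t_nZ)$ are simultaneously triangulable by Lie's theorem, and the finite invariant measure together with the symmetry about $0$ of the Lyapunov spectrum of an $\OO(p,q)$-valued cocycle should force all exponents to vanish and the cocycle to be tempered, hence bounded on a full-measure set — this is where solvability and compactness genuinely enter. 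It remains to treat the case $\lambda(H^0)$ compact, i.e. $H^0$ a nontrivial torus: here I would induct on $\dim M$, passing to the fixed-point submanifold $\mathrm{Fix}(H^0)$ — a compact pseudo-Riemannian submanifold of strictly smaller dimension on which the connected solvable group $\mathrm{N}_G(H^0)^0$ acts transitively with $H^0$ in its kernel — after first using effectiveness (no nonzero ideal of $\frg$ inside $\frh$) to exclude the possibility that $H^0$ is normal in $G$.
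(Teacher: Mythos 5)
Your strategy (recurrence on $M$ plus properness of the isometric action on the pseudo-orthonormal frame bundle, reduced to boundedness of the derivative cocycle) is genuinely different from the paper's, but it has a real gap exactly at the step you flag as ``the hard part''. The heuristic you offer --- that the finite invariant measure together with the symmetry of the Lyapunov spectrum of an $\OO(p,q)$-valued cocycle ``should force all exponents to vanish and the cocycle to be tempered, hence bounded'' --- does not work as stated. Symmetry of the spectrum about $0$ never forces vanishing: the flow by right translations of the diagonal one-parameter group on $\SL_2(\RR)/\Gamma$ with the bi-invariant (Lorentzian) metric is an isometric, finite-volume-preserving flow whose derivative cocycle is $\Ad(a_t)$, with exponents $\pm 1$. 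So the solvability of $G$ must enter in an essential way that your sketch does not supply; and even if all exponents did vanish, zero Lyapunov exponents do not imply that the cocycle is bounded (not even on a full-measure set), which is what your properness argument needs along the specific recurrent times $t_n$. In effect you have reduced Theorem~A to an unproved cocycle-rigidity statement, whereas the paper resolves precisely this difficulty algebraically: the density theorem for uniform subgroups of solvable groups (proved via Furstenberg's lemma and the finite invariant measure on $G/H$) shows that the pulled-back form $\met$ on $\frg$, whose radical is exactly $\frh$ and which contains no nontrivial ideal of $\frg$ in its radical by effectiveness, is nil-invariant; the structure theory of nil-invariant symmetric forms on solvable Lie algebras (Proposition \ref{prop_radicalzero}) then forces $\frh=\frr=\zsp$.

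The compact case is also incomplete. Passing to $\mathrm{Fix}(H^{\circ})$ with the transitive action of $\Nor_G(H^{\circ})^{\circ}$ is fine (the fixed set is nondegenerate and of strictly smaller dimension since the isotropy representation is faithful), but $H^{\circ}$ lies in the kernel of the new action, so the inductive hypothesis --- discreteness of the stabilizer for the induced effective action on the smaller manifold --- gives no information about $H^{\circ}$ itself; and ruling out ``$H^{\circ}$ normal in $G$'' by effectiveness does not by itself produce a contradiction. You would need an additional argument extracting a contradiction from the kernel of the restricted action, or a different reduction. As it stands, both branches of your case distinction terminate in plausible but unproved claims, so the proposal does not yet constitute a proof of Theorem \ref{mthm_lattice}.
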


Theorem \ref{mthm_lattice} states that the stabilizer $\Gamma = G_x$ of any point $x\in M$ is a discrete subgroup in $G$.
Therefore, the orbit map
\[
o_{x}:  G  \to M ,\quad g \mapsto g \cdot x
\]
is a covering map. Since $o_{x}$ is a local diffeomorphism, 
the pseudo-Riemannian metric $\g$ on $M$ pulls back to a left-invariant non-degenerate metric tensor, and thus defines a pseudo-Riemannian metric $\g_G$ on $G$.
By construction, $\g_G$ is also invariant under conjugation by
$\Gamma$. This  subgroup is uniform in $G$ since $M$ is compact. 
We prove that the invariance under the uniform subgroup $\Gamma$ extends to all of $G$:

\begin{mthm}\label{mthm_invariant}
Let $\g_G$ be
the pulled-back left-invariant pseudo-Riemannian metric on $G$ as above. Then $\g_G$ is a bi-invariant pseudo-Riemannian metric.
\end{mthm}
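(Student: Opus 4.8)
The plan is to argue throughout with the symmetric bilinear form $\beta:=(\g_G)_e$, the value of $\g_G$ at the identity, viewed as a bilinear form on $\frg\cong T_eG$. It is non-degenerate, and since the right translations $R_\gamma$ with $\gamma\in\Gamma$ are deck transformations of $o_x$, hence isometries of $\g_G$, while $\g_G$ is left-invariant, $\beta$ is invariant under $\Ad(\Gamma)$; equivalently $\Ad(\Gamma)\subseteq\OO(\beta)$. Because $\g_G$ is already left-invariant, it is bi-invariant exactly when it is also right-invariant, i.e. when $\Ad(G)\subseteq\OO(\beta)$, i.e. when $\ad(X)\in\sso(\beta)$ for every $X\in\frg$; this is what must be proved. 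I would first pass to the universal cover of $G$: this changes neither $\frg$, $\ad$, nor $\Ad(G)$, and the preimage of $\Gamma$ is again a uniform lattice, so from now on $G$ is simply connected solvable and $\Gamma$ a torsion-free uniform lattice.

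First I would handle the nilradical. Let $\frn$ be the nilradical of $\frg$ and $\Nil(G)$ the corresponding connected subgroup. Then $\Gamma_N:=\Gamma\cap\Nil(G)$ is a uniform lattice in the simply connected nilpotent group $\Nil(G)$; since $\ad(Y)$ is a nilpotent operator on $\frg$ for every $Y\in\frn$, the group $\Ad(\Nil(G))$ is unipotent, and by Mal'cev's density theorem $\Ad(\Gamma_N)$ is Zariski dense in it. As $\OO(\beta)\subseteq\GL(\frg)$ is Zariski closed and contains $\Ad(\Gamma_N)$, it contains $\Ad(\Nil(G))$, whence $\ad(\frn)\subseteq\sso(\beta)$. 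Consequently the set $\frh:=\{X\in\frg:\ad(X)\in\sso(\beta)\}$ is a Lie subalgebra containing $\frn\supseteq[\frg,\frg]$ --- hence an ideal of $\frg$ with $\frg/\frh$ abelian --- and it is $\Ad(\Gamma)$-invariant, since conjugating a $\beta$-skew operator by an element of $\OO(\beta)$ produces a $\beta$-skew operator. The task reduces to proving $\frh=\frg$.

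Next I would use a recurrence argument. Since $\g_G$ is left-invariant it descends to a pseudo-Riemannian metric on the compact manifold $\Gamma\backslash G$, and its volume density is the descent of a left Haar measure on $G$; the latter is also right-invariant because $G$, carrying a lattice, is unimodular, so every right translation acts on $\Gamma\backslash G$ preserving this density. Fix $X\in\frg$ and apply Poincar\'e recurrence to the density-preserving flow $t\mapsto R_{\exp(tX)}$ on $\Gamma\backslash G$: at a recurrent point $\Gamma g$ one obtains $t_n\to+\infty$ and $\gamma_n\in\Gamma$ with $\gamma_n\exp(t_nY)\to e$ in $G$, where $Y:=\Ad(g)X$, whence $\Ad(\gamma_n)\,\Ad(\exp(t_nY))\to\id$. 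Since the $\Ad(\gamma_n)$ preserve $\beta$, it follows that $\beta\bigl(\Ad(\exp(t_nY))\,\cdot\,,\,\Ad(\exp(t_nY))\,\cdot\,\bigr)\to\beta$. Decomposing the complexification $\frg_\CC$ into the generalized eigenspaces $\frg_\lambda$ of $\ad(Y)$ and recalling that $\Ad(\exp(tY))$ grows on $\frg_\lambda$ like $\e^{t\,\mathrm{Re}\,\lambda}(1+|t|)^{d_\lambda}$, one reads off from this limit that $\beta(\frg_\lambda,\frg_\mu)=0$ whenever $\mathrm{Re}(\lambda+\mu)\neq0$ and that the nilpotent part of $\ad(Y)$ is $\beta$-skew; together these say that the hyperbolic and nilpotent Jordan components of $\ad(Y)$ lie in $\sso(\beta)$, so $\ad(Y)$ differs from its \emph{elliptic} (compact-semisimple) component by a $\beta$-skew operator. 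As the recurrent points form a dense set and $X$ was arbitrary, the only possible obstruction to $\frh=\frg$ is carried by the elliptic components of the adjoint operators.

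The remaining, and hardest, task is the elliptic part. The elliptic component of $\ad(X)$ depends only on the image of $X$ in the abelian quotient $\frg/\frn$, and --- using the Auslander--Mostow semisimple splitting of $G$, in which $\Gamma$ maps onto a lattice of the vector group $G/\Nil(G)\cong\RR^k$ --- the totality of these elliptic components exponentiates to a compact torus $\mathcal T\subseteq\Aut(\frg)$ acting trivially on $\frg/\frn$. The elliptic component of every $\Ad(\gamma)$, $\gamma\in\Gamma$, lies in the Zariski closure $\ac{\Ad(\Gamma)}\subseteq\OO(\beta)$, so the image of $\Gamma$ inside $\mathcal T$ is contained in $\OO(\beta)$, and the whole theorem reduces to showing $\mathcal T\subseteq\OO(\beta)$. \textbf{This is the main obstacle}: the image of the lattice $\Gamma$ in the compact torus $\mathcal T$ need not be dense --- it can even be finite --- so invariance under it does not formally imply invariance under $\mathcal T$. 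What has to be exploited instead is the interplay with the two facts already in hand, namely $\ad(\frn)\subseteq\sso(\beta)$ and the non-degeneracy of $\beta$: decomposing $\frg_\CC$ into $\mathcal T$-weight spaces (all the non-zero weights occurring inside $\frn$), one must show that a non-degenerate form which is $\ad(\frn)$-skew and invariant under the image of $\Gamma$ in $\mathcal T$ necessarily pairs weight spaces by opposite weights --- which is precisely $\mathcal T$-invariance. Carrying this out, through a careful analysis of the weights of $\frg$ and of how uniform lattices sit in $G/\Nil(G)\cong\RR^k$ and in the semisimple splitting, is the technical core of the argument; once it is done, $\ad(\frg)\subseteq\sso(\beta)$, so $\g_G$ is right-invariant, hence bi-invariant.
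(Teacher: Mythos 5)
There is a genuine gap, and you have named it yourself: the elliptic part is not handled, only declared to be ``the technical core.'' Your first two steps (Mostow plus Malcev density on the nilradical, and the Poincar\'e-recurrence/unimodularity argument) are a plausible dynamical substitute for the paper's density theorem (Theorem \ref{thm:density}, proved there via Chevalley's theorem, the invariant measure on $G/H$ and Furstenberg's lemma), whose whole purpose is exactly what you extract: that $\met$ is skew under the split and nilpotent Jordan parts of the adjoint operators, i.e.\ nil-invariant in the sense of Definition \ref{def_nilinvariant}. Even in this half there is a loose end: recurrence gives skewness of the hyperbolic and nilpotent parts of $\ad(Y)$ only for $Y=\Ad(g)X$ with $\Gamma g$ a recurrent point, and since $\Ad(g)$ is not yet known to lie in $\OO(\beta)$ and the Jordan decomposition is not continuous, the sentence ``recurrent points are dense and $X$ was arbitrary, so the only obstruction is elliptic'' does not follow as stated; some additional argument (e.g.\ working with the Zariski closure of $\Ad(\Gamma)$, as the paper does) is needed to get a statement valid for every $X\in\frg$.

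The decisive missing piece, however, is the one you flag in bold. You correctly observe that the image of $\Gamma$ in the compact torus $\mathcal{T}$ of elliptic parts need not be dense, so $\Gamma$-invariance does not formally yield $\mathcal{T}$-invariance, and you then assert that a weight-space analysis ``must'' close this gap without carrying it out. That closing argument is precisely where the paper invests its real effort: Theorem \ref{thm_nilinvariant} asserts that a nil-invariant symmetric bilinear form on a solvable Lie algebra is automatically invariant (no lattice, no density in the compact directions is used), and its proof occupies Sections \ref{sec_modules}--\ref{sec_reduction}: the skew-pairing lemma, the characteristic totally isotropic central ideal $\frj_0=\zen(\frn)\cap[\frg,\frn]$, Proposition \ref{prop_radicalzero} on the metric radical, and an induction by reduction along one-dimensional totally isotropic central ideals (a double-extension argument, Propositions \ref{prop_A_and_O}--\ref{prop_completereduction} and Corollary \ref{cor_invariant}). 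The symmetry of $\met$ is essential here -- the paper points out a skew-symmetric counterexample of Zwart--Boothby -- so the step cannot be dismissed as routine. As it stands, your proposal is a correct reduction of Theorem \ref{mthm_invariant} to this algebraic statement, by a partly different (dynamical) route, but not a proof of it.
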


Here,  a left-invariant metric $\g_G$ on $G$ is called \emph{bi-invariant} if the right multi\-plication map
$G \to G$, $h \mapsto h g$ is an isometry for all $g\in G$.\\


The above two theorems exhibit strong restrictions on
transitive isometric actions which are
imposed by the pseudo-Riemannian structure.
As Johnson \cite{johnson} showed,
every compact homogeneous space for a solvable Lie group
(except the circle) admits transitive solvable actions of
arbi\-trarily large dimensions.
Therefore, such actions cannot preserve a pseudo-Riemannian metric.
In addition, uniform subgroups in simply connected solvable Lie
groups are not always Zariski-dense in the adjoint representation,
so there is no apparent reason for a $\Gamma$-invariant metric
to be bi-invariant.
Such types of lattices appear already in the Lorentzian case
(see Medina and Revoy \cite{MR2}).

Let us further remark that, contras\-ting Theorems
\ref{mthm_lattice} and \ref{mthm_invariant},
Zwart and Boothby \cite[Section 7]{ZB} constructed 
transitive solvable actions with non-discrete stabilizer on compact symplectic manifolds which do not pull back to bi-invariant 
skew forms.
\\

Theorems \ref{mthm_lattice} and \ref{mthm_invariant} partially
generalize the results of Zeghib \cite[Th\'eor\`eme 1.7]{zeghib} on compact
Lorentzian homogeneous spaces with non-compact isometry groups.

Another special case are \emph{flat} compact
pseudo-Riemannian homogeneous manifolds.
It was noted in Baues \cite[Chapter 4]{baues2} that these
are precisely the quotients of two-step nilpotent Lie groups with
bi-invariant pseudo-Riemannian metrics by lattice subgroups.\\


%
Since every Lie group with bi-invariant metric is
a symmetric space (O'Neill \cite[Chapter 11]{oneill}), 
we obtain:

\begin{mcor}\label{mcor_symmetric}
The universal cover of $M$ is a
pseudo-Riemannian symmetric space.
In particular, $M$ is a locally symmetric space.
\end{mcor}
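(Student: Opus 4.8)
The plan is to read the corollary off from Theorems \ref{mthm_lattice} and \ref{mthm_invariant}, combined with the standard fact (O'Neill \cite[Chapter 11]{oneill}) that a Lie group equipped with a bi-invariant pseudo-Riemannian metric is a pseudo-Riemannian symmetric space. Since both main theorems are already available, the argument is essentially one of assembly; the only point that requires (routine) care is the passage from $G$ to its universal cover, which need not be $G$ itself.

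First, by Theorem \ref{mthm_lattice} the orbit map $o_x\colon G\to M$ is a covering map, and its fibres are exactly the cosets $g\Gamma$ with $\Gamma=G_x$ discrete and uniform, so $M$ is the quotient $G/\Gamma$ of $G$ by the free, properly discontinuous right $\Gamma$-action. Let $\pi\colon\tilde G\to G$ be the universal covering homomorphism and set $\tilde\Gamma=\pi^{-1}(\Gamma)$. Since $\pi$ is a local homeomorphism, $\tilde\Gamma$ is a discrete subgroup of $\tilde G$, fitting into an extension $1\to\pi_1(G)\to\tilde\Gamma\to\Gamma\to1$. Because $\pi$ is a homomorphism, one checks that the fibres of $o_x\circ\pi$ are precisely the right $\tilde\Gamma$-cosets, so $M\cong\tilde G/\tilde\Gamma$; as $M$ is a manifold this $\tilde\Gamma$-action is free and properly discontinuous, hence $\tilde G\to M$ is a covering map, and since $\tilde G$ is simply connected it is the universal cover, i.e.\ $\tilde M=\tilde G$.

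Next I would transport the metric. By Theorem \ref{mthm_invariant} the metric $\g_G$ is bi-invariant on $G$, so its pullback $\tilde\g$ along the covering homomorphism $\pi$ is a bi-invariant pseudo-Riemannian metric on $\tilde G$, and $o_x\circ\pi\colon(\tilde G,\tilde\g)\to(M,\g)$ is a local isometry. Thus $(\tilde M,\tilde\g)=(\tilde G,\tilde\g)$ is a Lie group carrying a bi-invariant pseudo-Riemannian metric. By O'Neill \cite[Chapter 11]{oneill} — the inversion $g\mapsto g^{-1}$ fixes $e$, acts as $-\id$ on $T_e\tilde G$, and is an isometry for a bi-invariant metric, hence is the geodesic symmetry at $e$, and conjugating it by left translations produces the geodesic symmetry at any other point — the space $\tilde M$ is a pseudo-Riemannian symmetric space. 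Finally, since $M$ is locally isometric (via $o_x\circ\pi$) to the symmetric space $\tilde M$, it is locally symmetric.

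The main obstacle, such as it is, is not conceptual but bookkeeping: one must verify that $\tilde G$ really is the universal cover of $M$ (the covering-space manipulation above) and that pulling a bi-invariant metric back along a covering homomorphism yields a bi-invariant metric; everything else is immediate from Theorems \ref{mthm_lattice} and \ref{mthm_invariant} and the cited structure of bi-invariant metrics.
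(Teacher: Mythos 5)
Your argument is correct and is essentially the paper's own: the corollary is deduced directly from Theorems \ref{mthm_lattice} and \ref{mthm_invariant} together with the fact from O'Neill \cite[Chapter 11]{oneill} that a Lie group with bi-invariant metric is a pseudo-Riemannian symmetric space. The paper leaves the passage from $G$ to its universal cover $\tilde G$ (and the pullback of the bi-invariant metric along the covering homomorphism) implicit, and your covering-space bookkeeping simply fills in that routine step.
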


%

Recall that a manifold is called \emph{aspherical} if its
universal covering space is contractible. In particular,
every homogeneous space $M$ for a solvable Lie group is aspherical.
Such $M$ are often referred to as \emph{solvmanifolds}.
For comparison, note that any simple Lie group that acts
on a compact homogeneous aspherical manifold
is locally isomorphic to $\SL_2(\RR)$.
Note also that $\SL_2(\RR)$ can act locally effectively
on compact solvmanifolds, for example on the two-torus.

\begin{mcor}\label{mcor_isoM}
Let $M$ be a compact aspherical homogeneous pseudo-Riemannian
manifold with solvable fundamental group.
Then the connected component of\/ $\Iso(M)$ is solvable and acts
almost freely on $M$.
\end{mcor}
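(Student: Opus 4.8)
The plan is to reduce the statement to Theorem~\ref{mthm_lattice}. Put $G := \Iso(M)_0$. Since $M$ is connected and $\Iso(M)$ acts transitively, the orbit $G\cdot x$ is open (the orbit map $\Iso(M)\to M$ is open and $G$ is open in $\Iso(M)$), hence also closed, hence equal to $M$; thus $G$ acts transitively on $M$. Consequently, once we know that $G$ is solvable, Theorem~\ref{mthm_lattice} applied to the transitive isometric $G$-action shows that $G$ acts almost freely, which is the assertion. So the whole content is to prove that $G = \Iso(M)_0$ is solvable.

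For this, take a Levi decomposition $G = S\ltimes R$ with solvable radical $R$ and maximal semisimple subgroup $S$; it suffices to show $S = \one$. Each simple factor $S_0$ of $S$ lies in $G = \Iso(M)_0$, hence acts effectively on the compact homogeneous aspherical manifold $M$, so by the fact recalled in the introduction it is locally isomorphic to $\SL_2(\RR)$. (Alternatively: by Mostow's structure theorem for homogeneous spaces, the compact homogeneous space $G/H$ is equivariantly diffeomorphic to $K/(K\cap H)$ for a maximal compact subgroup $K$ of $G$, so the compact group $K$ acts transitively on the closed aspherical manifold $M$, effectively through some quotient; since a compact connected Lie group acting effectively on a closed aspherical manifold is a torus, the maximal compact subgroup of $S_0$ is abelian, which forces $S_0$ to be locally $\SL_2(\RR)$.) Thus $S$ is an almost-direct product of copies of groups locally isomorphic to $\SL_2(\RR)$, and the task reduces to excluding even one such factor; this is where the pseudo-Riemannian metric must enter, because topologically such a factor is not excluded --- $\PSL_2(\RR)$ acts effectively on the torus $T^2$.

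The heart of the proof, and the step I expect to be the main obstacle, is therefore to rule out an $\SL_2(\RR)$-factor. Suppose $S_0 \subseteq G$ is locally isomorphic to $\SL_2(\RR)$; fix a standard basis $h,e,f$ of $\sl_2(\RR)$ (so $[h,e]=2e$, $[h,f]=-2f$, $[e,f]=h$) and let $X_Z$ denote the fundamental Killing vector field on $M$ of $Z\in\sl_2(\RR)$, so $(g)_*X_Z = X_{\Ad(g)Z}$ for $g\in S_0$. The one-parameter subgroup $a(s)=\exp(sh)$ acts by isometries $\varphi_s$ of $M$ with $(\varphi_s)_*X_e = \e^{2s}X_e$, so the smooth function $\g(X_e,X_e)$ is multiplied by $\e^{4s}$ under $\varphi_s$; as $M$ is compact this function is bounded, forcing $\g(X_e,X_e)\equiv 0$, and likewise $\g(X_f,X_f)\equiv\g(X_e,X_h)\equiv\g(X_f,X_h)\equiv 0$. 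Hence at every point of $M$ the symmetric form $(Z,W)\mapsto\g(X_Z,X_W)$ on $\sl_2(\RR)$ is a (point-dependent, possibly zero) multiple of the Killing form. One then has to extract a contradiction: if this form vanishes identically the $S_0$-orbits are totally isotropic submanifolds, and a computation in a local normal form for the metric near such an orbit shows that isometries fixing it act along the null directions only by translations, whence $S_0$ --- being perfect --- acts trivially, contradicting effectiveness; if the form is somewhere non-degenerate, then on a nonempty open $S_0$-invariant set the orbits are three-dimensional and isometric to $\SL_2(\RR)$ with a constant multiple of the bi-invariant Lorentzian (Killing) metric, and --- passing to the contractible universal cover $\tilde M$, on which $\pi_1(M)$ acts freely, cocompactly, and centralizing the lifted $S_0$-action, and using that a homogeneous pseudo-Riemannian manifold is complete --- one is forced to find inside $\pi_1(M)$ a subgroup that is not virtually solvable (for instance a lattice in $\SL_2(\RR)$), contradicting the solvability of $\pi_1(M)$. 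Making this dichotomy and the last two implications precise is the delicate technical core; one may alternatively appeal to the known structure theory of isometric actions of simple Lie groups on compact pseudo-Riemannian manifolds.

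In either case $S=\one$, so $G=R$ is solvable; Theorem~\ref{mthm_lattice} then shows that $G=\Iso(M)_0$ acts almost freely on $M$, which completes the proof.
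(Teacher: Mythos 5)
Your reduction is sound as far as it goes: the identity component $G=\Iso(M)_\circ$ does act transitively, the Conner--Raymond/Montgomery torus argument does force every simple factor of a Levi subgroup to be locally isomorphic to $\SL_2(\RR)$, and once solvability of $G$ is known, Theorem~\ref{mthm_lattice} gives almost freeness exactly as you say. The scaling computation is also correct: conjugating by $\exp(sh)$ and using compactness does force $\g(X_e,X_e)\equiv\g(X_f,X_f)\equiv\g(X_h,X_e)\equiv\g(X_h,X_f)\equiv 0$ (and, after noting that every nilpotent element of $\sl_2(\RR)$ lies in such a triple and that a quadratic form vanishing on the null cone of the Killing form is proportional to it, one gets your pointwise proportionality claim). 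But the heart of the matter --- excluding even one local $\SL_2(\RR)$ factor --- is precisely what you leave unproved. Neither branch of your dichotomy is an argument: in the totally isotropic case, the assertion that a ``local normal form near the orbit'' shows isometries fixing the orbit act only by null translations, hence that the perfect group $S_0$ acts trivially, is unsubstantiated (and it is unclear why solvability of $\pi_1(M)$ would not be needed here); in the nondegenerate case, the orbits in your open set need not be closed or compact, so producing a lattice of $\SL_2(\RR)$ inside $\pi_1(M)$ requires substantial machinery you do not supply. The closing remark that one may ``appeal to the known structure theory of isometric actions of simple Lie groups'' is not a proof either; note that $\SL_2(\RR)$ does act isometrically and transitively on compact aspherical pseudo-Riemannian manifolds (quotients by cocompact lattices with the Killing metric), so any exclusion must genuinely use solvability of $\pi_1(M)$, and your sketch never brings that hypothesis to bear in a checkable way.

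The paper closes exactly this gap by a different and metric-free mechanism: it only uses that $\Iso(M)_\circ$ preserves a finite Borel measure (Theorem~\ref{thm:invarvolume}, proved via Theorem~\ref{thm:asph_invarvol}). Writing $L=R\rtimes S$ with $S\cong\tilde\SL_2(\RR)^\ell$ and $H$ the stabilizer, one shows with modular-character and invariant-measure arguments (Lemma~\ref{lem:SL2_dec}, Lemma~\ref{lem:SmodC}, Proposition~\ref{prop:pHisdiscrete}), together with the algebraic Lemma~\ref{lem:algebraiclemma2} to dispose of a possible Levi factor inside $H_\circ$, that the projection of $H$ to $S$ is a \emph{discrete uniform} subgroup of $S$; solvability of $\pi_1(M)=H/H_\circ$ then makes this a solvable cocompact lattice in $\tilde\SL_2(\RR)^\ell$, which forces $S=\one$. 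If you want to salvage your metric approach, you would have to prove both branches of your dichotomy rigorously; as written, the proposal establishes the easy reductions but not the decisive step.
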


Corollary \ref{mcor_isoM} can be viewed as a
consequence of Gromov's Centralizer Theorem,
which implies that no group locally isomorphic to $\SL_2(\RR)$
can act on a compact analytic manifold with solvable fundamental
group (compare Gromov \cite[0.7.A]{gromov}).
Instead, we base our proof of Corollary \ref{mcor_isoM}
on the more general Theorem \ref{thm:invarvolume} below,
which concerns measure preserving transitive
actions on aspherical manifolds.

Moreover,
Corollary \ref{mcor_isoM} shows that in the homogeneous case\footnote{Results by An \cite{an} also indicate a relation
in the non-homogeneous case.}
the fundamental group
determines the structure of the isometry group to a large extent.
Indeed, a simply connected solvable Lie group is determined by
a lattice up to a compact deformation, see Baues and Klopsch
\cite{BK} (compare also Theorem \ref{thm:density} below).\\

We turn now to the problem of determining the isometry types
with given fundamental group:
Let $G$ be a simply connected  Lie group, $\g_{G}$ a bi-invariant 
pseudo-Riemannian metric on $G$, and $\Gamma \leq G$ a lattice. 
This turns $G/\,\Gamma$ into a pseudo-Riemannian manifold with 
metric inherited from $\g_{G}$, and $G$ acts on $G/\,\Gamma$ by 
isometries.  
A set of data $\mathcal{P}_{M} = (G, \g_{G}, \Gamma, \phi)$, where \[
\phi: G/\,\Gamma\,\to\, M
\]
is an isometry, is called a \emph{presentation for $M$} by the Lie group $G$ with bi-invariant metric $\g_G$. 
Let  $\mathcal{P}_{i} = (G_i, \g_{G_{i}}, \Gamma_i, \phi_{i})$ 
be presentations for $M_{1}$, $M_{2}$ respectively.  
An \emph{isometry of presentations}
\[
\Psi: \mathcal{P}_{1}  \to \mathcal{P}_{2}
\]
is an isomorphism of Lie groups $ \Psi: G_{1} \to G_{2}$,  which satisfies  \begin{enumerate} \item $\Psi(\Gamma_{1}) = \Gamma_{2}$. 
\item
$\Psi$ is an isometry with respect to the metrics 
$\g_{G_{1}}$ and $\g_{G_{2}}$. 
\end{enumerate} 
In particular, $\Psi$ defines induced isometries 
of pseudo-Riemannian manifolds
\[
\bar{\Psi}: G_1/\,\Gamma_{1} \to G_2/\,\Gamma_{2}
\quad \text{ and } \quad
\psi = \phi_{2} \bar{\Psi}  \phi_{1}^{-1} : M_{1} \to M_{2} \; .
\] 
By Theorem A and Theorem B, every compact pseudo-Riemannian
manifold $M$ with solvable isometry group has a presentation 
$\mathcal{P}$ by a Lie group with bi-invariant metric.
With these preliminaries in place 
we can show in Section \ref{sec_rigidity}: 

\begin{mcor}\label{mcor_rigidity}
Let $M_1$ and $M_2$ be compact pseudo-Riemannian manifolds with presentations $\mathcal{P}_{1}$ and 
$\mathcal{P}_{2}$ by Lie groups with bi-invariant metrics. Let 
$x_{i} = \phi_{i}(e \, \Gamma_{i})$ be the base points. 
Then every isometry  $\psi: M_{1} \to M_{2}$ with $\psi(x_{1}) = x_{2}$ 
is induced by an isometry of presentations 
$ \Psi: \mathcal{P}_{1}  \to \mathcal{P}_{2}$.
In particular, any two presentations of $M$ by Lie groups with bi-invariant metric are isometric. 
\end{mcor}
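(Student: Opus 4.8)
The plan is to promote the isometry $\psi$ to an isomorphism of the presenting Lie groups by exploiting the identification of the isometry group of a compact pseudo-Riemannian solvmanifold. Write $a_i\colon G_i\to\Iso(M_i)$ for the isometric action obtained from the left translation action on $G_i/\,\Gamma_i$ via $\phi_i$, and let $o_i\colon G_i\to M_i$, $o_i(g)=a_i(g)\cdot x_i=\phi_i(g\,\Gamma_i)$, be the orbit map at the base point. Since $\phi_i$ is an isometry and $G_i\to G_i/\,\Gamma_i$ is a local isometry (the bi-invariant $\g_{G_i}$ being right $\Gamma_i$-invariant), one has $o_i^{*}\g_{M_i}=\g_{G_i}$, while $(G_i)_{x_i}=\Gamma_i$. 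By Theorems~\ref{mthm_lattice} and~\ref{mthm_invariant}, and the fact---part of the main results of the paper---that the identity component $\Iso(M_i)^{0}$ coincides with $G_i$, the action $a_i$ is an isomorphism of $G_i$ onto $\Iso(M_i)^{0}$.

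First I would construct $\Psi$. Conjugation by $\psi$ is a Lie group isomorphism $c_\psi\colon\Iso(M_1)\to\Iso(M_2)$, $f\mapsto\psi f\psi^{-1}$, which carries $\Iso(M_1)^{0}$ onto $\Iso(M_2)^{0}$. Through the identifications above it transports to a Lie group isomorphism $\Psi\colon G_1\to G_2$, determined by $a_2(\Psi(g))=\psi\circ a_1(g)\circ\psi^{-1}$, equivalently by
\[
\Psi(g)\cdot\psi(p)=\psi(g\cdot p)\qquad(g\in G_1,\ p\in M_1).
\]
Evaluating at $p=x_1$ and using $\psi(x_1)=x_2$ gives the intertwining identity $o_2\circ\Psi=\psi\circ o_1$, which drives the rest of the argument.

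Next I would verify that $\Psi$ is an isometry of presentations inducing $\psi$. For $\gamma\in\Gamma_1=(G_1)_{x_1}$ one has $\Psi(\gamma)\cdot x_2=\Psi(\gamma)\cdot\psi(x_1)=\psi(\gamma\cdot x_1)=x_2$, so $\Psi(\Gamma_1)\subseteq(G_2)_{x_2}=\Gamma_2$; the same reasoning applied to $\psi^{-1}$ gives $\Psi(\Gamma_1)=\Gamma_2$, which is condition~(1). For condition~(2),
\[
\Psi^{*}\g_{G_2}=\Psi^{*}o_2^{*}\g_{M_2}=(o_2\circ\Psi)^{*}\g_{M_2}=(\psi\circ o_1)^{*}\g_{M_2}=o_1^{*}\psi^{*}\g_{M_2}=o_1^{*}\g_{M_1}=\g_{G_1},
\]
since $\psi$ is an isometry. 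Finally, because $\Psi(\Gamma_1)=\Gamma_2$, the induced map $\bar\Psi\colon G_1/\,\Gamma_1\to G_2/\,\Gamma_2$ is well defined, and for $p=\phi_1(g\,\Gamma_1)=o_1(g)$ the intertwining identity gives $\phi_2(\bar\Psi(g\,\Gamma_1))=o_2(\Psi(g))=\psi(o_1(g))=\psi(p)$, so $\phi_2\bar\Psi\phi_1^{-1}=\psi$. For the last assertion, given two presentations of a single $M$ with base points $x_1,x_2$, an element of $a_1(G_1)$ provides an isometry of $M$ taking $x_1$ to $x_2$, and the statement just proved upgrades it to an isometry of presentations.

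The step that really carries the theorem is obtaining $\Psi$ \emph{as a Lie group isomorphism}: the obvious approach, lifting $\psi$ through the covering maps $o_i\colon G_i\to M_i$, produces a priori only an isometry of pseudo-Riemannian manifolds fixing $e$, and such isometries need not be homomorphisms (inversion on a non-abelian group is one). Here the obstacle is removed by invoking the already-established identification $\Iso(M_i)^{0}=G_i$, which turns conjugation by $\psi$ into a genuine isomorphism $G_1\to G_2$; the remainder is bookkeeping with the presenting data. Alternatively, one could argue directly that the differential $\d\Psi_{e}$ is a Lie algebra isomorphism---it is automatically a linear isometry preserving the triple bracket $[[\,\cdot\,,\,\cdot\,],\,\cdot\,]$ by the symmetric space structure of a bi-invariant metric---using that the lift intertwines an isomorphism $\Gamma_1\to\Gamma_2$ of the uniform lattices together with the density properties of such lattices (cf.~\cite{BK} and Theorem~\ref{thm:density}); this route is more computational, and I expect the isometry-group argument to be the cleaner one to write down.
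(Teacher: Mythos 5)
Your overall strategy is the same as the paper's: conjugate by $\psi$ inside the isometry groups and transport the resulting isomorphism $J_\psi\colon\Iso(M_1)\to\Iso(M_2)$ back to the presenting groups, then check preservation of the lattices and of the bi-invariant metrics and verify that the induced map on quotients is $\psi$. However, the step on which you hang the construction of $\Psi$ contains a false assertion: the action homomorphism $a_i\colon G_i\to\Iso(M_i)_{\circ}$ is in general \emph{not} an isomorphism. Its kernel is the normal core of $\Gamma_i$ (the largest normal subgroup of $G_i$ contained in $\Gamma_i$), which is discrete but may be infinite; for the flat torus, $G=\RR^n$, $\Gamma=\ZZ^n$, one has $\Iso(M)_{\circ}\cong T^n\not\cong\RR^n$. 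The statement in the paper that ``the identity component of the isometry group coincides with $G$'' refers to a group $G$ already sitting inside $\Iso(M)$ and acting effectively, not to the abstract simply connected group of a presentation. Consequently the relation $a_2(\Psi(g))=\psi\,a_1(g)\,\psi^{-1}$ determines $\Psi(g)$ only modulo $\ker a_2$, and ``$\Psi=a_2^{-1}\circ J_\psi\circ a_1$'' is not defined as written.

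The gap is repairable, and the repair is exactly what the paper does: $a_i$ is a covering homomorphism onto $\Iso(M_i)_{\circ}$ (surjective by Corollary~\ref{mcor_isoM}, with discrete kernel because the action is almost free), and since the $G_i$ are simply connected, $J_\psi|_{\Iso(M_1)_{\circ}}$ lifts \emph{uniquely} to an isomorphism $\Psi\colon G_1\to G_2$ with $a_2\circ\Psi=J_\psi\circ a_1$. With $\Psi$ so defined, your intertwining identity $\Psi(g)\cdot\psi(p)=\psi(g\cdot p)$ still holds, and everything downstream — the stabilizer argument giving $\Psi(\Gamma_1)=\Gamma_2$, the pullback computation $\Psi^{*}\g_{G_2}=\g_{G_1}$, the identity $\phi_2\bar\Psi\phi_1^{-1}=\psi$ — goes through unchanged and coincides with the paper's verification. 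Your handling of the final assertion (apply the first part to the isometry $\lambda_g$ moving $x_1$ to $x_2$) is a harmless variant of the paper's route, which instead changes the base point of $\mathcal{P}_2$ by the conjugation isometry of Lemma~\ref{lem:changebase} and applies the first part to $\psi=\id_M$.
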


Corollary \ref{mcor_rigidity} provides us with an effective
procedure to classify compact homo\-geneous pseudo-Riemannian
manifolds $M$ with a transitive solvable isometry group,
by classifying simply connected Lie groups with bi-invariant metrics
and their lattices up to equivalence under Lie group
automorphisms.


\subsection{Further results and applications}


The proofs of Theorems \ref{mthm_lattice} and \ref{mthm_invariant},
given in Section \ref{sec_proofs},
rest on a careful
analysis of the symmetric bilinear form $\met$ induced by
$\g_G$ on the Lie algebra $\frg$ of $G$.
A priori, $\met$ is $\Ad(\Gamma)$-invariant, so
by continuity it is also invariant under the Zariski closure
$\ac{\Ad(\Gamma)}$.
However, in general a uniform subgroup of a
solvable Lie group $G$ is not Zariski-dense in $G$.\footnote{Baues and Klopsch exhibit
examples of lattices which are not Zariski-dense in
\cite[Chapter 2]{BK}.}
The analogous situation for semisimple Lie groups
is by comparison well understood through Borel's density
theorem \cite{borel0},
which states that a lattice in a semisimple Lie group $S$ without
compact factors is Zariski-dense in any linear representation
of $S$.
For solvable Lie groups there is a collection of density results
in special cases, see for example
Malcev \cite{malcev},
Baues and Klopsch \cite[Lemma 3.5]{BK},
Raghunathan \cite[Theorem 3.2]{raghunathan}
or Saito \cite[Th\'eor\`eme 3]{saito}.
These special cases are subsumed in the following density theorem:

\begin{thm}\label{thm:density}
Let $G$ be a connected solvable Lie group and $H$ a
uniform subgroup,
and let $\rho:G\to\GL(V)$ be a representation on a
finite-dimensional real vector space $V$. 
Let $\ra{A}$ denote the Zariski closure of $\rho(G)$ in $\GL(V)$. 
Then
\[
\ac{\rho(H)} \supseteq \raA_\spl,
\]
where $\raA_\spl$ is the maximal trigonalizable subgroup of $\raA$.
\end{thm}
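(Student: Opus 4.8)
The plan is to reduce to the known case where $G$ is simply connected and then argue by induction on $\dim G$. First I would observe that the statement only concerns the image $\rho(G)$ and the behaviour of $\rho(H)$ inside it, so I may replace $G$ by $\rho(G)$ and then pass to its Zariski closure $\raA$; a uniform subgroup $H$ of $G$ maps to a subgroup $\rho(H)$ whose closure in $G$ (in the ordinary topology) need not be all of $G$, but its image is still cocompact in $\rho(G)$, which suffices. Next, lifting to the universal cover $\tilde G$, one has a uniform subgroup $\tilde H$ (the preimage of $H$, which is a lattice in the simply connected solvable group $\tilde G$ since $M=G/H$ is compact), and it is enough to prove $\ac{\rho(\tilde H)} \supseteq \raA_\spl$ because the projection $\tilde G \to G$ induces a surjection on Zariski closures. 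So I reduce to: $G$ simply connected solvable, $\Gamma \le G$ a lattice, $\rho: G \to \GL(V)$ a representation, $\raA = \ac{\rho(G)}$, and the goal is $\ac{\rho(\Gamma)} \supseteq \raA_\spl$.

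The core of the argument is to decompose $\raA$ using its structure as a solvable algebraic group. Write $\raA = \raA_\spl \cdot \raT$ where $\raT$ is a maximal torus (the reductive part), so that $\raA_\spl$ is the unique maximal connected trigonalizable normal subgroup, containing the unipotent radical $\raA_\uni$ and meeting every maximal torus; in fact $\raA/\raA_\uni$ is abelian of the form (torus)$\times$(split torus part), and $\raA_\spl$ is the preimage of the split torus part. The claim $\ac{\rho(\Gamma)} \supseteq \raA_\spl$ then splits into two tasks: (i) the unipotent radical $\raA_\uni$ lies in $\ac{\rho(\Gamma)}$, and (ii) the image of $\ac{\rho(\Gamma)}$ in the torus quotient $\raA/\raA_\uni$ contains the full split (i.e.\ $\RR^\times$-type) part. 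For (i) I would use that the exponential map identifies the nilpotent Lie algebra of $\raA_\uni$ with the unipotent radical, and invoke the classical Mal'cev-type density statement that a lattice in a simply connected nilpotent group is Zariski-dense — applied to the preimage in $G$ of the maximal normal nilpotent part, using that $\Gamma$ intersects the nilradical in a lattice (a standard fact about lattices in solvable Lie groups). For (ii) I would use that on the quotient by the unipotent radical the representation is a sum of one-dimensional real or two-dimensional rotational pieces; the two-dimensional rotational pieces generate compact tori and are irrelevant to $\raA_\spl$, while on the real eigencharacters the cocompactness of $\Gamma$ forces the character values $\chi(\Gamma)$ to be Zariski-dense in $\RR^\times$ — this is because if $\chi(\Gamma)$ were contained in a proper algebraic subgroup of $\RR^\times$ it would be finite, hence $\chi$ would factor through $G/\Gamma'$ for a finite-index subgroup and be bounded on a cocompact set, forcing $\chi$ trivial, contradicting that $\chi$ is part of the split character lattice of $\raA = \ac{\rho(G)}$.

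The main obstacle is step (ii): controlling the image of the lattice in the reductive quotient when that quotient has both a compact torus part and a split part intertwined. The delicate point is that $\rho(\Gamma)$ projected to the maximal torus $\raT$ of $\raA$ may genuinely fail to be dense in the compact directions — indeed this is exactly why the theorem only claims containment of $\raA_\spl$ and not of all of $\raA$ — so I must carefully separate the split torus directions (where cocompactness of $M$ forces density via the character argument above) from the anisotropic directions (where no such control exists), and verify that $\raA_\spl$ is precisely the subgroup generated by $\raA_\uni$ together with the preimages of the split torus. Making this separation rigorous requires knowing that the split part of the reductive quotient of $\raA$ is detected already by $\rho(\Gamma)$, which ultimately comes down to the fact that $\raA$ is generated by $\rho(G)$ and $G/\Gamma$ is compact, so no nontrivial real character of $\raA$ can be trivial on $\Gamma$ without being trivial on all of $G$. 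I would then assemble (i) and (ii): $\ac{\rho(\Gamma)}$ is a Zariski-closed subgroup containing $\raA_\uni$ whose image in $\raA/\raA_\uni$ contains the split torus, hence it contains the preimage of the split torus, which is exactly $\raA_\spl$.
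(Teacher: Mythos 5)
Your opening reduction already loses the generality the theorem needs: in this paper a \emph{uniform subgroup} is a closed cocompact subgroup, not necessarily discrete, and the preimage of $H$ in the universal cover $\tilde G$ is again uniform but is a lattice only when $H$ itself is discrete. This matters because the theorem is applied in Section \ref{sec_proofs} to the stabilizer $H=G_x$ \emph{before} it is known to be discrete (discreteness is Theorem \ref{mthm_lattice}, whose proof uses this density theorem), so the positive-dimensional case cannot be discarded; and your subsequent steps (Mal'cev density, ``$\Gamma$ intersects the nilradical in a lattice'') are formulated specifically for lattices.

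The more serious gap is step (i). The unipotent radical of $\raA=\ac{\rho(G)}$ is in general strictly larger than $\ac{\rho(N)}$, where $N$ is the nilradical of $G$: take $G=\RR\ltimes_{\phi}\RR^{2}$ with $\phi(t)=\mathrm{diag}(e^{t},e^{-t})$ (the group $\mathrm{Sol}$, which has lattices, with $N=\RR^{2}$), and let $\rho$ be the composition of $G\to G/N\cong\RR$ with $t\mapsto\left(\begin{smallmatrix}1 & t\\ 0 & 1\end{smallmatrix}\right)$; then $\rho(N)=\{\I\}$ while $\raA=\Uni(\raA)$ is one-dimensional. So Mal'cev density for the lattice $\Gamma\cap N$ in $N$ only yields $\ac{\rho(\Gamma\cap N)}=\ac{\rho(N)}$ and says nothing about the unipotent directions of $\raA$ coming from the non-nilpotent part of $G$ (in the example one must instead use that $\Gamma$ projects to a lattice in $G/N$). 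If instead ``preimage of the maximal normal nilpotent part'' means $\rho^{-1}(\Uni(\raA))$, that closed normal subgroup is in general not nilpotent and $\Gamma$ need not meet it in a lattice, so Mal'cev does not apply there either. In fact the inclusion $\Uni(\raA)\subseteq\ac{\rho(\Gamma)}$ is precisely the hard half of the theorem --- it is Raghunathan's Theorem 3.2 / a Mostow-type density statement, listed in the introduction among the special cases this theorem subsumes --- so step (i) as written assumes what must be proved. Your character argument in step (ii) for the split part of the reductive quotient is sound in spirit (with only minor real-points issues in the final assembly), but the proof stands or falls with (i). For comparison, the paper avoids the split into (i)/(ii) altogether: it uses the finite invariant measure on $G/H$ (Lemma \ref{lem:probmeasure}), Chevalley's theorem to realize $\ac{\rho(H)}$ as the stabilizer of a line, and Furstenberg's lemma to force the image of $\ag{A}$ in $\PGL(W)$ to be an anisotropic torus, so that the kernel --- which lies in $\ac{\rho(H)}$ --- already contains $\raA_\spl$; this treats the unipotent and split-torus directions simultaneously and works verbatim for non-discrete uniform $H$.
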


Applied in the context of pseudo-Riemannian solvmanifolds,
this density theorem implies the following property:
The scalar product $\met$ induced by $\g_G$ on the Lie algebra
$\frg$ is \emph{nil-invariant}.
This means if
$\ad(X)_{\rm n}$ is the nilpotent part of the Jordan decomposition
of $\ad(X)$ for $X\in\frg$,
then $\ad(X)_{\rm n}$ is a skew operator with respect to $\met$.
In Sections \ref{sec_h_0} and \ref{sec_reduction} we study
the properties of nil-invariant scalar products.
The main result is:

\begin{thm}\label{thm_nilinvariant}
Let $\frg$ be a solvable Lie algebra and  $\met$ a 
nil-invariant symmetric bilinear form on $\frg$.
Then $\langle\cdot,\cdot\rangle$ is invariant.
\end{thm}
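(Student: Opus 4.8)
The plan is to prove that, in the Jordan decomposition $\ad(X)=\ad(X)_{\rm s}+\ad(X)_{\rm n}$, the semisimple part $\ad(X)_{\rm s}$ is already skew with respect to $\met$; since $\ad(X)_{\rm n}$ is skew by hypothesis, every $\ad(X)$ is then skew, which is exactly the invariance of $\met$.

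I start with two reductions. Since $[\frg,\frg]$ acts nilpotently on $\frg$ (Lie's theorem), for $X\in[\frg,\frg]$ we have $\ad(X)=\ad(X)_{\rm n}$, which is skew. Hence $\frs:=\{X\in\frg:\ad(X)\ \text{is $\met$-skew}\}$ is a linear subspace containing $[\frg,\frg]$, and it is a subalgebra because the commutator of two skew operators is again skew; thus $\frs$ is an ideal with $\frg/\frs$ abelian. Next fix a Cartan subalgebra $\frh\le\frg$. In the Fitting decomposition of $\frg$ with respect to $\ad(\frh)$, the sum of the generalized weight spaces for nonzero weights lies in $[\frg,\frg]$, so $\frg=\frh+[\frg,\frg]$; as $[\frg,\frg]\subseteq\frs$, it remains only to show that $\ad(H)$ — equivalently $\ad(H)_{\rm s}$ — is $\met$-skew for each $H\in\frh$.

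Passing to the complexification if necessary, decompose $\frg=\bigoplus_\alpha\frg^\alpha$ into generalized $\ad(\frh)$-weight spaces; then $\ad(H)_{\rm s}$ acts on $\frg^\alpha$ by the scalar $\alpha(H)$, so for $Y\in\frg^\alpha$ and $Z\in\frg^\beta$,
\[
\langle\ad(H)_{\rm s}Y,Z\rangle+\langle Y,\ad(H)_{\rm s}Z\rangle=(\alpha(H)+\beta(H))\,\langle Y,Z\rangle .
\]
Consequently every $\ad(H)_{\rm s}$ is $\met$-skew if and only if the \emph{weight-space orthogonality} $\langle\frg^\alpha,\frg^\beta\rangle=0$ holds whenever $\alpha+\beta\neq 0$. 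This orthogonality is the heart of the matter, and it is delicate precisely because a priori $\met$ need not be $\ad(\frh)$-invariant. The available leverage is: $\ad(H)_{\rm n}$ is skew and preserves each weight space; for $Y\in\frg^\gamma$ with $\gamma\neq 0$ the operator $\ad(Y)$ is nilpotent, hence skew, giving the raising/lowering relations $\langle[Y,A],B\rangle+\langle A,[Y,B]\rangle=0$ with $[Y,A]\in\frg^{\gamma+\alpha}$ and $[Y,B]\in\frg^{\gamma+\beta}$; and $\ad(X)_{\rm s}(\frg)\subseteq[\frg,\frg]$ for every $X$, because $0$ is an eigenvalue of $\ad(X)$, so $\ad(X)_{\rm s}$ is a polynomial in $\ad(X)$ with vanishing constant term. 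Combining the skewness of these nilpotent operators with the weight relations — organized either as an induction along a suitable total order on the weights, or, more structurally, by passing to the algebraic hull $\fra\subseteq\gl(\frg)$ of $\ad(\frg)$, writing $\fra=\frt\ltimes\fru$ with $\fru$ the set of nilpotent elements, first upgrading the hypothesis to the statement that \emph{all} of $\fru$ acts by $\met$-skew operators, and then analyzing $\met$ as a morphism of $\frt$-modules — one forces $\langle\frg^\alpha,\frg^\beta\rangle=0$ for $\alpha+\beta\neq0$.

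A final technical issue is that $\met$ is only assumed symmetric bilinear, hence possibly degenerate, so its radical $\frg^\perp$ must be accommodated; this calls for an outer induction on $\dim\frg$, split according to whether $\frg^\perp$ contains a nonzero ideal $\frm$ of $\frg$. If it does, $\met$ descends to a nil-invariant form on $\frg/\frm$ — the Jordan parts of $\ad_{\frg/\frm}(\bar X)$ are induced by those of $\ad_\frg(X)$, as the latter preserve $\frm$ — which is invariant by induction, and invariance pulls back to $\frg$. If it does not, then $\met$ restricts nondegenerately to every minimal ideal, which supplies enough structure to run the weight-orthogonality argument directly. I expect this weight-space orthogonality step to be the main obstacle: the point is to exploit the skewness of \emph{all} the nilpotent operators that arise — not merely the Jordan parts $\ad(X)_{\rm n}$ singled out by the definition of nil-invariance — in order to control the purely semisimple, toral directions, about which the hypothesis gives no information a priori.
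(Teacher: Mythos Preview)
Your approach via a Cartan subalgebra and weight-space orthogonality is genuinely different from the paper's, and it can be made to work, but as written it has a gap at exactly the point you yourself flag as ``the main obstacle'': you assert $\langle\frg^\alpha,\frg^\beta\rangle=0$ for $\alpha+\beta\neq0$ without proving it. Neither of your two proposed mechanisms is carried out, and the algebraic-hull variant is not convincing as stated --- nil-invariance directly controls only the unipotent radical $\fru$ of the hull, and ``analyzing $\met$ as a morphism of $\frt$-modules'' does not by itself force the toral directions to act skew; that is precisely what is at stake. The good news is that the orthogonality follows from the very ingredients you list. First prove $\langle\frh,\frg^\gamma\rangle=0$ for $\gamma\neq0$: skewness of $\ad(Y)$ for $Y\in\frg^\gamma$ gives $\langle[H,Y],H'\rangle=-\langle[H',Y],H\rangle$ for $H,H'\in\frh$; taking $H=H'=H_0$ with $\gamma(H_0)\neq0$ and using bijectivity of $\ad(H_0)|_{\frg^\gamma}$ yields $\langle\frg^\gamma,H_0\rangle=0$, and then for arbitrary $H'$ the same identity gives $\langle[H_0,Y],H'\rangle=-\langle[H',Y],H_0\rangle=0$, hence $\langle\frg^\gamma,\frh\rangle=0$. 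Second, for $\alpha\neq0$ and $\alpha+\beta\neq0$, write $Y\in\frg^\alpha$ as $[H_0,Y']$ and use skewness of $\ad(Y')$ to get $\langle Y,Z\rangle=\langle H_0,[Y',Z]\rangle\in\langle\frh,\frg^{\alpha+\beta}\rangle=0$. With this in hand, your outer induction handling degeneracy is unnecessary: the argument never uses nondegeneracy of $\met$.

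For comparison, the paper proceeds along an entirely different route. It first shows (Sections~\ref{sec_modules}--\ref{sec_h_0}) that the metric radical $\frr=\frg^\perp$ is an ideal, by analyzing the characteristic ideal $\frj_0=\zen(\frn)\cap[\frg,\frn]$ through a ``skew pairing'' lemma --- essentially the same relation $\langle\rho(A)v,B\rangle=-\langle\rho(B)v,A\rangle$ that fills your gap above. It then passes to the nondegenerate quotient $\frg/\frr$ and runs an induction (Section~\ref{sec_reduction}) by successively reducing along one-dimensional totally isotropic central ideals $\frj\subset\zen(\frg)$, replacing $(\frg,\met)$ by $(\frj^\perp/\frj,\met)$, until one reaches an abelian algebra with positive-definite form; invariance is then propagated back up via Proposition~\ref{prop_A_and_O}, which identifies each step with a Medina--Revoy double extension. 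Your weight-space argument, once completed, is more elementary in that it avoids both the double-extension machinery and the separate treatment of the radical; the paper's approach, on the other hand, yields structural byproducts (Corollaries~\ref{cor_znzg} and~\ref{cor_definite_base}) about the center and the complete reduction that your method does not see.
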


The assumption that $\met$ is symmetric is crucial for
Theorem \ref{thm_nilinvariant},
since, in general, nil-invariance of a
bilinear form on $\frg$ does not imply its invariance.
Zwart and Boothby \cite[Section 7]{ZB} provide an example
of a skew-symmetric nil-invariant form on a solvable Lie algebra
which is not invariant.\\

An application of Theorem \ref{thm:density} and
Theorem \ref{thm_nilinvariant} is the following:

\begin{cor}\label{cor:AdHinvariant}
Let $G$ be a solvable Lie group, $H$ a uniform subgroup of $G$
and $\g$ a left-invariant pseudo-Riemannian metric on $G$ which is
right-invariant under $H$.
Then $\g$ is bi-invariant.
\end{cor}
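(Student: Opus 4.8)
The plan is to pass to the Lie algebra, feed the right hypothesis into Theorem~\ref{thm_nilinvariant} by means of the density theorem, and read off the conclusion. Write $\met$ for the scalar product that $\g$ induces on the Lie algebra $\frg$ of $G$. The left-invariant metric $\g$ is recovered from $\met$, and — as $G$ is connected — $\g$ is bi-invariant if and only if $\met$ is $\Ad(G)$-invariant, which in turn is equivalent to $\ad(X)$ being skew with respect to $\met$ for every $X\in\frg$, i.e.\ to $\met$ being invariant in the sense of Theorem~\ref{thm_nilinvariant}. The hypothesis that $\g$ is right-invariant under $H$ translates to: $\met$ is $\Ad(h)$-invariant for every $h\in H$. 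So everything reduces to showing that an $\Ad(H)$-invariant symmetric bilinear form $\met$ on $\frg$ is invariant, and by Theorem~\ref{thm_nilinvariant} it suffices to prove that $\met$ is nil-invariant.

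First I would note that the group $\OO(\met)\le\GL(\frg)$ preserving $\met$ is Zariski-closed, so $\Ad(H)\subseteq\OO(\met)$ forces $\ac{\Ad(H)}\subseteq\OO(\met)$. Applying Theorem~\ref{thm:density} to the adjoint representation $\Ad\colon G\to\GL(\frg)$ and writing $\raA=\ac{\Ad(G)}$, we obtain
\[
\raA_\spl \;\subseteq\; \ac{\Ad(H)} \;\subseteq\; \OO(\met),
\]
so the maximal trigonalizable subgroup $\raA_\spl$ already acts by isometries of $\met$.

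Next I would locate the nilpotent parts of the operators $\ad(X)$ inside $\raA_\spl$. For $X\in\frg$, the additive Jordan decomposition $\ad(X)=\ad(X)_{\rm s}+\ad(X)_{\rm n}$ yields the multiplicative Jordan decomposition
\[
\Ad(\exp tX)=\exp\bigl(t\,\ad(X)_{\rm s}\bigr)\,\exp\bigl(t\,\ad(X)_{\rm n}\bigr),
\]
whose unipotent factor is $\exp(t\,\ad(X)_{\rm n})$. Since $\raA$ is an algebraic group containing $\Ad(\exp tX)$, it contains this unipotent part for every $t$. Because $G$ is connected and solvable, $\raA$ is a connected solvable linear algebraic group, so its unipotent elements form a closed connected normal subgroup — the unipotent radical — which, being a unipotent group in characteristic zero, is trigonalizable and hence contained in the maximal trigonalizable subgroup $\raA_\spl$. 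Therefore $\exp(t\,\ad(X)_{\rm n})\in\raA_\spl\subseteq\OO(\met)$ for all $t$; differentiating the identity $\langle\exp(t\,\ad(X)_{\rm n})Y,\exp(t\,\ad(X)_{\rm n})Z\rangle=\langle Y,Z\rangle$ at $t=0$ shows that $\ad(X)_{\rm n}$ is skew with respect to $\met$. Thus $\met$ is nil-invariant, Theorem~\ref{thm_nilinvariant} yields that $\met$ is invariant, and hence $\g$ is bi-invariant.

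The translation between invariance of $\met$ and bi-invariance of $\g$, and the Zariski-closedness of $\OO(\met)$, are routine. The one genuinely structural point — and the step I expect to be the main obstacle to state cleanly — is the inclusion of the unipotent one-parameter groups $\{\exp(t\,\ad(X)_{\rm n})\}$ in $\raA_\spl$: this rests on the fact that in a connected solvable linear algebraic group the unipotent elements constitute the unipotent radical, together with the fact that this (a unipotent group in characteristic zero) is trigonalizable and hence lies inside the unique maximal trigonalizable subgroup produced by Theorem~\ref{thm:density}. Once this inclusion is secured, the proof is the short chain of containments displayed above.
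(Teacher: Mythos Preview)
Your proof is correct and follows essentially the same route as the paper's: translate to the Lie algebra, use the density theorem to conclude nil-invariance of $\met$, then invoke Theorem~\ref{thm_nilinvariant}. The paper's argument for the corollary is terse and simply asserts that Theorem~\ref{thm:density} yields nil-invariance; the details you spell out (Zariski-closedness of $\OO(\met)$, the unipotent part of $\Ad(\exp tX)$ lying in the unipotent radical of $\raA$, and $\Uni(\raA)\subseteq\raA_\spl$) are exactly what the paper unpacks later in Section~\ref{sec_proofs} when proving Theorems~\ref{mthm_lattice} and~\ref{mthm_invariant}.
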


Indeed, the left-invariant metric $\g$ induces an inner product
$\met$ on the Lie algebra $\frg$ of $G$.
The right-invariance under $H$ of $\g$ implies that $\met$
is $\Ad(H)$-invariant. As $H$ is uniform in $G$, the density
Theorem \ref{thm:density} implies that $\met$ is nil-invariant.
By Theorem \ref{thm_nilinvariant}, $\met$ is invariant on $\frg$
and thus $\g$ is a bi-invariant metric on $G$.\\

Compact homogeneous spaces for solvable Lie groups are
aspherical manifolds. So as a natural generalization one can
study compact aspherical homogeneous spaces.
In Section \ref{sec:invarvolume} we prove the following theorem:

\begin{thm}\label{thm:invarvolume}
Let $L$ be a connected Lie group that acts almost effectively and transitively on the compact aspherical manifold $M$.  Assume further that $L$ preserves a finite Borel measure on $M$. 
If the fundamental group of $M$ is solvable, then $L$ is solvable.
\end{thm}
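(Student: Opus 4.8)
The plan is to pass to the universal cover and exploit the interplay between the measure-preserving action and the asphericity of $M$. Write $\pi_1(M) = \Delta$, a solvable group by hypothesis, and let $p \colon \tilde M \to M$ be the universal covering. Since $M$ is aspherical, $\tilde M$ is contractible, so $M$ is a $K(\Delta,1)$ and in particular $\H^*(M;\RR) \cong \H^*(\Delta;\RR)$. The transitive action of $L$ makes $M$ diffeomorphic to $L/K$ for a closed subgroup $K$; almost effectiveness means $K$ contains no nontrivial connected normal subgroup of $L$, so the kernel of $L \to \Diff(M)$ is discrete and central, and we may assume $L$ acts effectively after dividing by it (this does not change solvability). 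The existence of an $L$-invariant finite Borel measure on $M = L/K$ is equivalent to unimodularity of the pair, i.e. the modular characters of $L$ and $K$ agree on $K$; in particular it forces $L$ to have no nontrivial characters visible on the relevant quotient, and more usefully it will let us run an averaging/cohomological argument.

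The key steps, in order: (1) Reduce to the case where $L$ is simply connected by lifting the action to the universal cover $\tilde L$ — the stabilizer becomes a possibly disconnected closed subgroup $\tilde K$ with $\tilde L/\tilde K = M$, and $\Delta = \pi_1(M)$ fits into an extension involving $\pi_0(\tilde K)$ and $\pi_1(L)$. (2) Use the Levi decomposition $L = S \ltimes R$ with $S$ semisimple and $R$ the solvable radical; the goal is to show $S$ is trivial. (3) Assuming $S \neq \{e\}$, extract a subgroup locally isomorphic to $\SL_2(\RR)$ acting on $M$; as the excerpt itself notes (in the discussion of Corollary \ref{mcor_isoM} and Gromov's Centralizer Theorem), one expects a contradiction with the solvability of $\pi_1(M)$. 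The honest route avoiding the analyticity hypothesis of Gromov's theorem is to use the invariant measure: restrict the $S$-action (or the $\SL_2$-action) to $M$, and analyze the $S$-orbits. Because $S$ is semisimple with finite center and $M$ carries a finite invariant measure, Borel density–type arguments apply to the stabilizers: a generic stabilizer $S_x$ is a lattice in $S$ up to the amenable radical, and then $\pi_1(S_x \backslash S)$ injects (virtually) into $\pi_1(M) = \Delta$, contradicting solvability of $\Delta$ since a lattice in a semisimple group without compact factors is not virtually solvable. (4) Conclude $S = \{e\}$, so $L = R$ is solvable.

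I expect the main obstacle to be Step (3): making rigorous the claim that a nontrivial semisimple factor acting on a finite-measure aspherical $M$ produces a non-(virtually-)solvable subgroup of $\pi_1(M)$, without invoking analyticity. The cleanest formulation is probably: the $S$-action on $M$ with invariant finite measure has, by the Borel density theorem applied fiberwise (or by a Ratner/Zimmer-type ergodic decomposition), almost every orbit closure of the form $S/S_x$ with $S_x$ of finite covolume; since $M$ is aspherical and $S/S_x \hookrightarrow M$ induces $\pi_1(S/S_x) \to \pi_1(M)$ with image of finite index in $\pi_1(S/S_x)$ (because higher homotopy of $M$ vanishes and that of $S/S_x$ is controlled by the maximal compact), one gets a virtually solvable lattice quotient of $S$, forcing $S$ compact; but a compact semisimple $S$ acting on aspherical $M$ with $\pi_1$ solvable must be trivial because $\H^2(\Delta;\RR)$ would have to carry a nonzero invariant class dual to an $S$-orbit $S^3/\text{(finite)}$, impossible for solvable $\Delta$ of the relevant cohomological dimension. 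Tightening this chain — in particular handling the non-closed orbits and the passage from "generic orbit" to an actual embedded submanifold controlling $\pi_1$ — is where the real work lies; a fallback is to cite Gromov's Centralizer Theorem directly in the analytic case and remark that the general smooth case follows from the measure-theoretic version in Zimmer's program.
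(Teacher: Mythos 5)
Your high-level plan (Levi decomposition $L=S\ltimes R$, show $S=\one$ using the finite invariant measure) agrees with the paper, but the step you yourself flag as ``where the real work lies'' is precisely the step that is missing, and the sketch you give for it does not hold up. First, the assertion that a finite $S$-invariant measure on $M$ forces a generic stabilizer $S_x$ to be a lattice (up to amenable radical) is not a theorem for smooth actions on manifolds: Borel/Furstenberg density arguments apply to algebraic or projective actions, or to homogeneous quotients, and you never exploit the one structural fact that makes the problem tractable here, namely that $M=L/H$ is a single $L$-orbit, so that everything can be read off from the closed subgroup $H$ and its projection to $S$. Second, the claim that the inclusion of an orbit $S/S_x\hookrightarrow M$ is virtually $\pi_1$-injective ``because $M$ is aspherical'' is false in general: asphericity kills the higher homotopy of $M$ but gives no control on the kernel of $\pi_1(S/S_x)\to\pi_1(M)$ (a loop in an orbit can perfectly well bound a disk in $M$). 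Third, your treatment of the compact case (an invariant class in $\H^2(\Delta;\RR)$ dual to an $S^3$-orbit) is not an argument; the correct tool is the Conner--Raymond theorem that a compact connected group acting almost effectively on a compact aspherical manifold is a torus. Finally, the fallback ``cite Gromov's Centralizer Theorem and remark that the smooth case follows from Zimmer's program'' is exactly what the paper is written to avoid, since Gromov's theorem needs analyticity and no precise smooth substitute is cited.

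For comparison, the paper's proof runs as follows. Conner--Raymond is applied first, to conclude that the maximal compact subgroup of the Levi factor is a torus, hence $S\cong\tilde\SL_2(\RR)^\ell$. Then, writing $M=L/H$ and projecting $H$ to $S$, one studies uniform subgroups $H'$ of $\tilde\SL_2(\RR)$ whose identity component is a nontrivial connected solvable normal subgroup: such $H'_\circ$ must be conjugate into $N$ or $AN$, and a modular-character computation (unimodularity of $\tilde\SL_2(\RR)$ versus non-unimodularity of $H'$) shows $\tilde\SL_2(\RR)/H'$ carries no finite invariant measure. Pushing the invariant measure from $M$ to these quotients forces the projection of $H$ to $S$ to be discrete, hence a uniform lattice; a separate Lie-algebra ideal argument (using almost effectiveness) rules out a Levi factor inside $H_\circ$, so $H_\circ\subset R$ and the lattice $\mathsf{p}(H)$ is a quotient of the solvable group $\pi_1(M)=H/H_\circ$. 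A solvable uniform lattice in $\tilde\SL_2(\RR)^\ell$ exists only if $\ell=0$, so $L=R$ is solvable. If you want to salvage your approach, you would need to replace your ``generic stabilizer'' and ``$\pi_1$-injectivity of orbits'' claims by this kind of explicit analysis of $H$ and its projection to the Levi factor.
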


\vspace*{1ex}


\subsection*{Notations and conventions}
The identity element of a group $G$ is denoted by $e$.
If $A$ and $B$ are subsets of $G$,  we put  $A \cdot  B = \{ a b \mid a \in A, b \in B \}$.

Let $H$ be a subgroup of $G$.
We write
$\Ad_{\frg}(H)$ for the
adjoint representation of $H$ on the Lie algebra $\frg$ of $G$,
to distinguish it from the adjoint representation $\Ad(H)$ on
its own Lie algebra $\frh$.

A subgroup $\aG$ of $\GL_n(\CC)$ is called a
\emph{linear algebraic group} if it is the solution set of
a system of polynomial equations. We say $\aG$ is
\emph{$K$-defined},
where $K$ is subfield of $\CC$, if the polynomial equations
defining $\aG$ have coefficients in $K$. The
\emph{$K$-points} of $\aG$ are the elements of
$\aG_K=\aG\cap\GL_n(K)$.
A group $\raG = \aG_{\RR}$ is called a \emph{real algebraic group}
if it consists of the $\RR$-points of an
$\RR$-defined linear algebraic group $\aG$.

We let $\raG^{\circ}$ denote the connected component of the
identity of $\raG$ with respect to the Zariski topology,
and $\raG_{\circ}$ the connected component of the identity
with respect to the natural Euclidean topology.
Note that $\raG_{\circ} \subset \raG^{\circ}$.

If $M \subset \aG$ is a subset, $\ac{M}$ denotes the closure of 
$M$ in the Zariski topology.
If $G$ is a Lie group with subgroup $H$, then
we say $H$ is \emph{Zariski-dense} in $G$ if
$\ac{\Ad_{\frg}(H)}=\ac{\Ad_{\frg}(G)}$.



%

\subsection*{Acknowledgements}
Wolfgang Globke was supported by the Australian Research Council grants {\small DP120104582} and {\small DE150101647}.
He would also like to thank the Mathematical Institute of the
University of G\"ottingen,
where part of this work was carried out, for its hospitality
and support.



\section{Review of Jordan decompositions}
\label{sec_jordan}

In this section we recall some facts on the Jordan decomposition
of endomorphisms and the Jordan decomposition in a linear
algebraic group.
Proofs can be found in Borel \cite[Chapter 4]{borel}.

\subsection{The additive Jordan decomposition}

Let $A$ be an endomorphism of a finite-dimensional real
vector space $V$.
There exist a unique semisimple endomorphism $A_{\rm ss}$
(that is, diagonalizable over $\CC$)
and a unique nilpotent endomorphism $A_{\rm n}$ of $V$ such that
\[
[A_{\rm ss},A_{\rm n}]=0
\quad\text{ and }\quad
A = A_{\rm ss} + A_{\rm n}.
\]
This is the \emph{additive Jordan decomposition} of $A$.

Moreover, there exist polynomials $P,Q\in\RR[x]$ with
constant term $0$ such that
\[
P(A)=A_{\rm ss},\quad Q(A)=A_{\rm n}.
\]
$P$ and $Q$ can be chosen as real polynomials.
The fact that the constant term in $P$ and $Q$ is $0$ implies
\[
\im A_{\rm ss}\subset\im A,\quad
\im A_{\rm n}\subset\im A.
\]
In particluar, any $A$-invariant subspace $U$ of $V$ is also
$A_{\rm ss}$- and $A_{\rm n}$-invariant.
The Jordan decomposition of $A$ induces those of $A|_U$ and
$A_{V/U}$.

Since $A_{\rm ss}$ is semisimple,
\[
V = \ker A_{\rm ss} \oplus \im A_{\rm ss}.
\]

\subsection{The multiplicative Jordan decomposition}

Let $g$ be an automorphism of a finite-dimensional real vector
space $V$. Set
\[
g_{\rm u} = \I - g_{\rm ss}^{-1}g_{\rm n}.
\]
Then $g_{\rm u}$ is unipotent (that is, $\I-g_{\rm u}$ is
nilpotent),
\[
[g_{\rm ss},g_{\rm u}]=0
\quad
\text{ and }
\quad
g = g_{\rm ss}\cdot g_{\rm u}.
\]
This is the \emph{multiplicative Jordan decomposition} of $g$.
The elements $g_{\rm ss}$ and $g_{\rm u}$ are uniquely
determined by these properties.
Any $g$-invariant subspace of $V$ is invariant under
$g_{\rm u}$ as well.

\subsection{The Jordan decomposition in an algebraic group}


\begin{thm}\label{thm_jordan}
Let $\aG$ be a linear algebraic group.
For $g\in\aG$, let $g = g_{\rm u}\cdot g_{\rm ss}$
denote its multiplicative Jordan decomposition.
Then $g_{\rm u},g_{\rm ss}\in\aG$, and
if $g\in\aG_{\RR}$, then also $g_{\rm u},g_{\rm ss}\in\aG_{\RR}$.
If $\phi:\aG\to\aH$ is a morphism of linear algebraic groups,
then $\phi(g_{\rm ss})=\phi(g)_{\rm ss}$ and
$\phi(g_{\rm u})=\phi(g)_{\rm u}$ for all $g\in\aG$.
\end{thm}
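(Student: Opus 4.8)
The plan is to follow the classical argument of Chevalley and Borel, translating the assertion into a statement about the action of the ambient group on its coordinate ring. First I would fix a closed embedding $\aG\subseteq\GL_n(\CC)$ and let $I\subseteq\CC[x_{ij},\det^{-1}]$ be the ideal of polynomial functions vanishing on $\aG$; write $A=\CC[\GL_n(\CC)]$. The group $\GL_n(\CC)$ acts on $A$ by right translations $(\rho_g f)(x)=f(xg)$, and the elementary but crucial observation is the characterization $g\in\aG \iff \rho_g(I)\subseteq I$: if $g\in\aG$ this is clear since $xg\in\aG$ for all $x\in\aG$, and conversely $\rho_g(I)\subseteq I$ forces $f(g)=(\rho_g f)(e)=0$ for every $f\in I$, i.e.\ $g\in\aG$. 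I would also record that $A$ is a directed union of finite-dimensional $\rho$-stable subspaces $W$ (for instance the span of monomials in the $x_{ij}$ of bounded degree times a fixed power of $\det^{-1}$), each $W$ being invariant under $\rho_h$ for all $h\in\GL_n(\CC)$.

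The heart of the argument is a functoriality statement for the multiplicative Jordan decomposition: for $g\in\GL_n(\CC)$ with decomposition $g=g_{\rm ss}g_{\rm u}$ as in the previous subsection, I would show $(\rho_g|_W)_{\rm ss}=\rho_{g_{\rm ss}}|_W$ and $(\rho_g|_W)_{\rm u}=\rho_{g_{\rm u}}|_W$ on every finite-dimensional stable $W$. To prove this I would use that, as a $\GL_n(\CC)$-module, $W$ is a subquotient of a finite direct sum of mixed tensor powers $V^{\otimes a}\otimes(V^{*})^{\otimes b}$ with $V=\CC^{n}$: on such a tensor space $g_{\rm ss}$ acts as a tensor product of commuting operators diagonalizable over $\CC$, hence diagonalizably over $\CC$, while $g_{\rm u}$ acts as a tensor product of commuting unipotent operators, hence unipotently, and the two commute, so the Jordan decomposition of $g$ on the tensor space is the expected one. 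Since $g_{\rm ss}$ and $g_{\rm u}$ are polynomials in $g$ (using that $g$ and $g_{\rm ss}$ are invertible), every $g$-invariant subspace is invariant under both, and the decomposition descends to the subquotient $W$. Given this, if $g\in\aG$ then $\rho_g(I)\subseteq I$; for each $f\in I$ choose a stable $W$ containing $f$, note $\rho_g(W\cap I)\subseteq W\cap I$, and observe that $\rho_{g_{\rm ss}}|_W$ and $\rho_{g_{\rm u}}|_W$, being polynomials in $\rho_g|_W$, also map $W\cap I$ into itself; letting $W$ exhaust $A$ gives $\rho_{g_{\rm ss}}(I)\subseteq I$ and $\rho_{g_{\rm u}}(I)\subseteq I$, so $g_{\rm ss},g_{\rm u}\in\aG$ by the characterization above. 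The $\RR$-rationality is then immediate: if $g\in\aG_\RR$, then $g_{\rm ss}$ and $g_{\rm u}$ are given by \emph{real} polynomials in $g$ (as recalled above), hence lie in $\GL_n(\RR)\cap\aG=\aG_\RR$. For a morphism $\phi:\aG\to\aH$, I would factor it through a closed embedding of $\aH$; the same tensor/subquotient argument applied to the representations defining $\aG$ and $\aH$ shows that $\phi$ sends semisimple elements to semisimple and unipotent elements to unipotent, while $\phi(g_{\rm ss})$ commutes with $\phi(g_{\rm u})$, so by uniqueness of the multiplicative Jordan decomposition of $\phi(g)$ one gets $\phi(g)_{\rm ss}=\phi(g_{\rm ss})$ and $\phi(g)_{\rm u}=\phi(g_{\rm u})$.

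The step I expect to be the main obstacle is exactly this functoriality of the Jordan decomposition — ensuring that the decomposition is preserved not merely under the tautological representation but under every rational representation — which is why the reduction to mixed tensor powers and their subquotients (Chevalley's description of rational $\GL_n(\CC)$-modules) carries the real weight of the proof. A secondary point requiring care is that ``semisimple'' here means diagonalizable over $\CC$, so the tensor-product step should be carried out over $\CC$ and only afterwards combined with the polynomial-in-$g$ expressions to descend to real points. Everything else is bookkeeping with the directed union of finite-dimensional $\rho$-stable subspaces, and the complete details can be found in Borel \cite[Chapter 4]{borel}.
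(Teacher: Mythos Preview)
The paper does not give its own proof of this theorem: it is stated as a background fact in the review section, with proofs deferred to Borel \cite[Chapter 4]{borel}. Your proposal is a correct sketch of precisely the classical Chevalley--Borel argument found there, so there is nothing to compare.
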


For a subset $M\subset\aG$ we write
$M_{\rm u}=\{g_{\rm u}\mid g\in M\}$
and $M_{\rm ss}=\{g_{\rm ss}\mid g\in M\}$.
Let $\uni(\aG)=\{g\in\aG\mid g=g_{\rm u}\}$ denote the
collection of the unipotent elements in $\aG$.
The \emph{unipotent radical} $\Uni(\aG)$ of $\aG$ is the maximal
normal subgroup consisting of unipotent elements.
A connected subgroup $\aT\subset\aG$ consisting of semisimple
elements is called a is called a \emph{torus}.

\section{The density theorem for solvable Lie groups}
\label{sec_density}

For a solvable linear algebraic group $\aG$ defined over $\RR$, 
let $\aG_\spl$ denote the maximal $\RR$-split connected subgroup of $\aG$. This means that $\aG_\spl$ is the maximal connected subgroup trigonalizable over the reals. For  a real algebraic group  $\raA = \aG_{\RR}$ its maximal trigonalizable subgroup  is $  \raA_{\spl} =  \raA \cap \aG_\spl$. 
Let $\aT$ be a torus 
defined over $\RR$. Then $\aT$ is called \emph{anisotropic} 
if $\ag{T}_\spl=\one$. Equivalently,  $\aT$ is anisotropic if its group of real points $\raT=\aT_{\RR}$ is compact. 
Every torus defined over 
$\RR$ 
has a decomposition into subgroups
$ \aT=\aT_\spl\cdot \aT_{\rm c}$, 
where $ \aT_{\rm c}$ is a maximal anisotropic torus defined over
$\RR$ and $\aT_\spl \cap \aT_{\rm c}$ is finite.
Moreover, if $\aT \leq \aG$ is a maximal torus defined over $\RR$
and $\aU$ is the unipotent radical of $\aG$,
then there is a direct product decomposition
\[
\aG_{\spl} = \aU \cdot \aT_{\spl}.
\]
Note also that the split part $\aG_{\spl}$ is preserved under
morphisms of algebraic groups which are defined over $\RR$. 
See Borel \cite[\textsection 15]{borel} for more background.\\

The purpose of this section is to prove:

{
\renewcommand{\thethm}{\ref{thm:density}}
\begin{thm}
Let $G$ be a connected solvable Lie group
and $H$ a uniform subgroup,
and let $\rho:G\to\GL(V)$ be a representation on a
finite-dimensional real vector space $V$. 
Let $\ra{A}$ denote the Zariski closure of $\rho(G)$ in
$\GL(V)$. 
Then 
\[
\ac{\rho(H)} \, \supseteq \,  \raA_\spl \; .
\]
\end{thm}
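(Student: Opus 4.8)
The plan is to reduce the statement to the case where $G$ is simply connected and then to a purely algebraic statement about the Zariski closure of $\rho(G)$, exploiting the structure theory of solvable real algebraic groups recalled above. I would proceed as follows.

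\textbf{Step 1: Reduction to the simply connected case.} Passing to the universal cover $\tilde G$ of $G$, the preimage $\tilde H$ of $H$ is again a uniform subgroup (the covering $\tilde G/\tilde H \to G/H$ has fibers a quotient of $\pi_1(G)$, which is central and finitely generated; one checks compactness is preserved), and $\rho$ lifts to $\tilde\rho$ with $\rho(G)=\tilde\rho(\tilde G)$, hence the same Zariski closure $\raA$. Since $\ac{\tilde\rho(\tilde H)} \supseteq \ac{\rho(H)}$ would not quite suffice, I instead note $\tilde\rho(\tilde H)$ surjects onto $\rho(H)$ up to the central kernel, so the Zariski closures agree. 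Thus we may assume $G$ is simply connected solvable.

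\textbf{Step 2: Structure of $\raA$ and its split part.} Write $\raA = \aA_\RR$ for the real algebraic group that is the Zariski closure, with unipotent radical $\aU = \Uni(\aA)$ and a maximal $\RR$-torus $\aT$, so that $\aA_\spl = \aU \cdot \aT_\spl$. Because $G$ is connected, $\rho(G) \subseteq \raA^\circ$, and $\rho(G)$ is Zariski-dense in $\raA$. The key point is that the split solvable part $\aA_\spl$ of $\aA_\RR$ is \emph{exponential} — a simply connected split solvable real Lie group — and the projection $\raA \to \raA/\aU \cdot \aT_{\mathrm c}$ onto the split torus quotient, composed with the logarithm, linearizes the problem. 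The plan is to show $\rho(H)$ is Zariski-dense in $\aA_\spl$ by showing its image is Zariski-dense in each of the two "halves": the unipotent radical $\aU$ and the split torus $\aT_\spl$.

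\textbf{Step 3: Density in the split torus part.} Consider the composition $\chi: G \xrightarrow{\rho} \raA \to \raA/(\aU \cdot \aT_{\mathrm c})$, whose target is isomorphic to a vector group $\RR^k$ (the split torus in additive coordinates) — more precisely one factors out the unipotent radical and the anisotropic torus, and what remains is the split torus $\aT_\spl \cong (\RR^{>0})^k$ or, after log, $\RR^k$. Then $\chi(G)$ is a connected subgroup of $\RR^k$ that is Zariski-dense, hence all of $\RR^k$; and $\chi(H)$ is the image of a uniform subgroup, so $G/H\chi^{-1}(\text{stuff})$... the relevant fact is that $\chi(H)$ spans $\RR^k$ over $\RR$: if it lay in a proper subspace, or even a proper Zariski-closed subset, then $\chi$ would descend to a nontrivial character $G \to \RR$ vanishing on a cocompact subgroup, contradicting that $H\backslash G$ (or $G/H$) is compact (a nonconstant character is unbounded on $G$ but bounded on the compact $G/H$ composed appropriately — here one uses that characters to $\RR$ of a Lie group are trivial on the commutator and one invokes that a cocompact subgroup must surject, up to finite index, onto $\RR^k$). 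This forces $\ac{\chi(H)} = \RR^k$, i.e. $\rho(H)$ is Zariski-dense in $\raA$ modulo $\aU \cdot \aT_{\mathrm c}$.

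\textbf{Step 4: Density in the unipotent radical.} Let $\aB = \ac{\rho(H)} \subseteq \raA$; it is a real algebraic subgroup, normalized by $\ac{\rho(G)} = \raA$ since $H$ is (up to the action) normal-ish — actually $H$ need not be normal, so instead I argue: $\aB$ contains a cocompact (in the real topology) subgroup of $\raA_\circ$, namely $\rho(H)$ is cocompact in $\rho(G)$ which is dense in $\raA_\circ$... hmm, this needs care. The cleaner route: $\aB \cap \aU$ is a unipotent algebraic subgroup; if it were proper in $\aU$, its normalizer in $\aU$ would be strictly larger, and an argument using that $\rho(H)$ is cocompact in $\rho(G)$ together with the fact that in the exponential group $\aA_\spl$ cocompact subgroups are "large" (Malcev-type: a closed cocompact subgroup of a simply connected solvable Lie group has the same dimension) yields $\dim \aB \geq \dim \aA_\spl$. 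Combined with Step 3 ($\aB$ surjects onto the split torus part) and $\aB \subseteq \ac{\rho(H)}$ being a subgroup, a dimension count forces $\aB \supseteq \aU \cdot \aT_\spl = \aA_\spl$.

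\textbf{Main obstacle.} The hard part is Step 4: controlling the unipotent radical. Cocompactness of $H$ in $G$ passes to $\rho(H)$ in $\rho(G)$, but $\rho(G)$ sits only densely, not closed, in $\raA_\circ$, so one cannot directly apply the clean theory of closed cocompact subgroups of solvable Lie groups. The resolution I would pursue is to replace $\raA$ by the \emph{real} Lie group $\raA_\circ$ and use that $\ac{\rho(H)}$ equals $\ac{\rho(G)}$ intersected appropriately — or, better, to observe that it suffices to prove the theorem after replacing $G$ by its image $\rho(G)$, then take the closure in the Euclidean topology $\overline{\rho(G)}^{\,\mathrm{top}}$, which is a connected solvable Lie group containing $\rho(H)$ cocompactly, apply a genuine structure theorem there (every simply connected solvable Lie group is determined by a cocompact subgroup up to compact deformation, and the exponential/split part is seen by the lattice), and finally note that the Zariski closure of this Euclidean closure is still $\raA$ while its split part still contains $\aA_\spl$. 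Pinning down this last compatibility — that the Euclidean closure's split radical Zariski-closes onto $\aA_\spl$ — is the technical crux and will likely require the decomposition $\aA_\spl = \aU\cdot\aT_\spl$ together with a careful analysis of how the anisotropic torus $\aT_{\mathrm c}$ (which may fail to be Euclidean-closed in $\raA$, wrapping densely) interacts with $\rho(H)$.
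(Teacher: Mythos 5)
Your Step 3 (density modulo $\aU\cdot\aT_{\rm c}$, via the observation that a nontrivial continuous character of $G$ killing $H$ would descend to an unbounded function on the compact space $G/H$) is essentially correct and can be made rigorous. The genuine gap is Step 4, which is exactly the core of the theorem: showing that $\ac{\rho(H)}$ contains the unipotent radical $\aU$ (and then a split torus). The ``Malcev-type'' fact you invoke --- that a closed cocompact subgroup of a simply connected solvable Lie group has the same dimension --- is false: a lattice such as $\ZZ^{n}\subset\RR^{n}$ is cocompact and zero-dimensional. What is true and relevant is a Zariski-density statement for cocompact subgroups, but that is precisely what is being proved; the results you propose to fall back on in your ``resolution'' (Malcev, Mostow, the Baues--Klopsch deformation/density results) are cited in the paper as special cases subsumed by this very theorem, so invoking them wholesale is circular, and in any case you do not explain how ``the split part is seen by the lattice'' would follow. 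Moreover, even granting $\dim\ac{\rho(H)}\geq\dim\raA_{\spl}$, a dimension count does not yield containment: the closure could meet $\aU$ in a proper subgroup while spending its dimension on anisotropic torus directions, and surjectivity onto the split-torus quotient plus a dimension bound does not force $\aU\subseteq\ac{\rho(H)}$.

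For comparison, the paper does not split $\raA_{\spl}$ into $\aU$ and $\aT_{\spl}$ at all. It first shows (via modular characters and Mostow's theorem) that $G/H$ carries a finite $G$-invariant Borel measure; then, by Chevalley's theorem, realizes the Zariski closure of $H$ as the stabilizer of a line in a minimal $\RR$-defined representation, pushes the invariant measure forward to the corresponding projective space, and applies Furstenberg's lemma to conclude that the image of $\ag{A}$ in $\PGL$ is an anisotropic torus. Hence the kernel of $\ag{A}\to\ag{B}$, which is contained in the Zariski closure of $H$, contains $\ag{A}_{\spl}$. Note that your proposal never uses an invariant measure on $G/H$; that is precisely the ingredient that controls the unipotent directions, and without it (or an equivalent substitute) Step 4 has no support.
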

\addtocounter{thm}{-1}
}

Before we give  the main part of the proof, we add an important observation:

\begin{lem}\label{lem:probmeasure}
Let $G$ be a connected solvable Lie group and $H$ a uniform
subgroup.
Then $G/H$ admits a $G$-invariant finite Borel measure.
\end{lem}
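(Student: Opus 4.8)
The plan is to show that $G/H$ admits a $G$-invariant finite Borel measure by the standard criterion: a homogeneous space $G/H$ carries a $G$-invariant (Radon) measure if and only if the modular functions satisfy $\Delta_G|_H = \Delta_H$, and such a measure is finite precisely when $G/H$ is compact (which it is, $H$ being uniform) provided the invariant measure exists. So the real content is the unimodularity-matching condition $\Delta_G(h) = \Delta_H(h)$ for all $h \in H$. First I would recall that for a connected solvable Lie group $G$, the modular function is $\Delta_G(g) = |\det \Ad_{\frg}(g)|^{-1}$ (or its inverse, depending on convention), and similarly for $H$ on its own Lie algebra $\frh$. Thus the condition to verify is $|\det \Ad_{\frg}(h)| = |\det \Ad_{\frh}(h)|$ for every $h \in H$.

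The key step is to exploit that $H$ is \emph{uniform}, i.e. $G/H$ is compact. The map $h \mapsto |\det \Ad_{\frg}(h)|$ is a continuous homomorphism from $H$ into the multiplicative group $\RR_{>0}$; I want to show it equals the analogous homomorphism for $H$ acting on $\frh$, equivalently that $h \mapsto |\det(\Ad_{\frg}(h))| / |\det(\Ad_{\frh}(h))|$ is trivial on $H$. Consider the $H$-representation on the quotient space $\frg/\frh$ (not a module in general, but as a vector space $\Ad_{\frg}(h)$ descends modulo the fact that $\frh$ need not be $\Ad_{\frg}(h)$-invariant — so instead work with the left action of $H$ on $\frg/\frh \cong T_{eH}(G/H)$). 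The point is that the $H$-action on the tangent space $T_{eH}(G/H)$ has the property that its determinant character $h \mapsto |\det(dh|_{T_{eH}(G/H)})|$ must be trivial: if it were not, then iterating a group element $h$ with $|\det| \neq 1$ would contract or expand volume arbitrarily near the base point, which is impossible on the compact manifold $G/H$ carrying — after we exhibit one — any smooth volume form, or more directly because a continuous homomorphism $H \to \RR_{>0}$ whose kernel contains the (necessarily cocompact-in-$G$) image is forced to be trivial by a recurrence/Poincaré argument. Concretely: pick any smooth positive density on the compact manifold $G/H$; the total mass is finite and positive; the Radon–Nikodym derivative of $h_*\mu$ against $\mu$ at the fixed point $eH$ is $|\det(dh|_{T_{eH}(G/H)})|^{\pm 1}$, and since $G/H$ is compact one shows this scaling factor cannot be uniformly $\neq 1$ along the $H$-orbit of a generic point. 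Since $\det \Ad_{\frg}(h) = \det(\Ad_{\frh}(h)) \cdot \det(dh|_{T_{eH}(G/H)})$ up to sign (the short exact sequence $0 \to \frh \to \frg \to \frg/\frh \to 0$ of $H$-\emph{spaces}, compatible with the adjoint action on $\frh$ and the isotropy action on the quotient), triviality of the last factor gives exactly $\Delta_G|_H = \Delta_H$.

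With the modular condition in hand, Weil's standard construction produces a nonzero $G$-invariant Radon measure $\mu$ on $G/H$, unique up to scaling. Finiteness is then immediate: $G/H$ is compact because $H$ is uniform, and a $G$-invariant Radon measure on a compact space is automatically finite (compact sets have finite measure for Radon measures). This $\mu$ is the desired $G$-invariant finite Borel measure, after normalizing $\mu(G/H) = 1$ if one wishes a probability measure.

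The main obstacle is the modular-function matching $\Delta_G(h) = \Delta_H(h)$: there is no reason for a general connected solvable $G$ to be unimodular, nor for $H$, so one genuinely needs the cocompactness of $H$ to force the determinant character on the isotropy representation $\frg/\frh$ to be trivial. I would carry this out via the volume-scaling/recurrence argument on the compact quotient sketched above; an alternative is to invoke Mostow's theorem that a uniform subgroup of a connected solvable Lie group has $G/H_0$-type structure making both $G$ and $H$ behave compatibly, but the direct density argument is cleaner and self-contained.
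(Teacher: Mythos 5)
Your overall framework is fine: reducing the lemma to Weil's criterion $\Delta_G|_H=\Delta_H$ and getting finiteness from compactness of $G/H$ is exactly the right skeleton, and it is also how the paper begins (citing Raghunathan). The gap is in the only step that carries real content, the verification of $\Delta_G|_H=\Delta_H$. Your argument is that, because $G/H$ is compact, the determinant character of the isotropy representation of $H$ on $T_{eH}(G/H)\cong\frg/\frh$ must be trivial, ``since iterating $h$ would contract or expand volume arbitrarily near the base point, which is impossible on a compact manifold.'' This principle is false: a diffeomorphism of a compact manifold can perfectly well contract volume near a fixed point (the loss is compensated elsewhere), and Poincar\'e recurrence is not available because it presupposes the invariant finite measure you are trying to construct. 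A decisive test is that your argument nowhere uses solvability of $G$ in this step, yet the statement fails without it: take $G=\SL_2(\RR)$ and $H$ a Borel (minimal parabolic) subgroup. Then $H$ is closed and uniform, $G/H$ is a circle, but the determinant character of the isotropy representation of $H$ is nontrivial ($\Delta_H\neq\Delta_G|_H\equiv 1$) and $G/H$ carries no $G$-invariant finite Borel measure. So compactness of $G/H$ alone cannot force the character to be trivial; any correct proof must exploit the solvable structure.

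The paper closes this gap structurally: since $[G,G]\subset N$ (the nilradical), $\Delta_G=|\det\Ad_{\frn}|$ and $\Delta_H=|\det\Ad_{\frh\cap\frn}|$; by Mostow's theorem $H\cap N$ is uniform in $N$, so $H_{\circ}\cap N$ is normal in $N$ and $(H\cap N)/(H_{\circ}\cap N)$ is a lattice in $N/(H_{\circ}\cap N)$ which $\Ad(H)$ preserves; hence the factor $|\det\Ad_{\frn/(\frh\cap\frn)}|$ is identically $1$, giving $\Delta_G|_H=\Delta_H$. If you prefer a route closer in spirit to a soft argument, you could instead use amenability: $G$ is solvable, hence amenable, and an amenable group acting continuously on the compact space $G/H$ fixes a point in the space of Borel probability measures, which yields the invariant finite measure directly (and, a posteriori, the modular identity). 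Either of these genuinely uses solvability; your recurrence/volume-scaling step, as written, does not and cannot be repaired without such input.
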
    
\begin{proof}
Let $\Delta_{H} = |\det\Ad_{\frh}|: H \to \RR $ be
the modular character of $H$, and
$\Delta_{G}|_{H} = |\det\Ad_{\frg}|_{H}: H \to \RR $ the restriction of the modular character of $G$ to $H$.  To show that there exists an invariant measure on $G/H$ it is sufficient (cf.~Raghunathan \cite[1.4 Lemma]{raghunathan}) to show that $\Delta_{H} = \Delta_{G}|_{H}$.

Let $N$ be the nilradical of $G$ and $\frn$ its Lie algebra.
Since $[G,G] \subset N$, $$ \Delta_{G} = |\det\Ad_{\frn}| \, \text{ and } \,  \Delta_{H} = |\det\Ad_{\frh \cap \frn}| \; . $$
Now $H \cap N$ is a uniform subgroup in $N$ by Mostow's theorem \cite[\textsection 5]{mostow0},
$H_{\circ} \cap N$ is a normal subgroup of $N$, and the projection of $H \cap N$  to $N/(H_{\circ} \cap N)$ is a uniform lattice.
We compute
\[
\Delta_{G}|_{H}
=|\det \Ad_{\frn}|_{H}
=|\det \Ad_{\frh\cap \frn}|_{H}\cdot |\det \Ad_{\frn/(\frh \cap \frn)} |_{H}
=\Delta_{H}\cdot 1
= \Delta_{H}.
\]
Note that the second factor is  $\equiv 1$,  since the adjoint of $H$ preserves an integral lattice in ${\frn /(\frh \cap \frn)}$.
Since $G/H$ is compact, any invariant Borel measure is finite. 
\end{proof}

\begin{proof}[Proof of Theorem \ref{thm:density}]
Let $\ag{A}$ be an $\RR$-defined  solvable linear algebraic
group which contains a solvable Lie subgroup 
$G \leq \ra{A} = \ag{A}_{\RR}$ as a Zariski-dense subgroup.
Let $H \leq G$ be a uniform subgroup and $\ag{H}$ the
Zariski closure of $H$.
By a Theorem of Chevalley (see Borel \cite[5.1 Theorem]{borel}),
there exists a complex vector space $W$,
with real structure $U = W_{\RR}$,
a linear representation
$\ag{A} \to \GL(W)$, which is defined over $\RR$, such that
$\ag{H}$ is the stabilizer of a line $[x] \in\PP(W)$, where $x \in U$. We may also assume that the representation is minimal in the following sense: the orbit $G \cdot {x}$ is not contained
in a proper subspace $W_{0}$ of $W$. 

Since $G/H$ has a $G$-invariant probability measure
(Lemma \ref{lem:probmeasure})
and maps into $\PP(U)$ via the orbit map (of the above representation on $U$)  at $[x]$,
there exists a $G$-invariant probability measure on $\PP(U)$.
In view of the minimality property, Furstenberg's Lemma,
see Zimmer \cite[3.2.2 Corollary]{zimmer},
asserts that the stabilizer of this
measure in $\PGL(U)$ is compact.

Therefore,  the (Euclidean closure of the) image
of $G$ is a compact subgroup of real points in
the image $\ag{B}$ of  $\ag{A}$ in $\PGL(W)$, and
it is also Zariski-dense in $\ag{B}$, since $G$ is dense in $\ag{A}$.
It follows that $\ag{B}$ is an anisotropic torus, that is,
$\ag{B}_\spl=\one$.
Note that the homomorphism of algebraic groups 
$\ag{A}\to\ag{B}$ is defined over $\RR$ and maps
$\ag{A}_{\spl}$ to $\ag{B}_{\spl}$. 
Thus its kernel $\ag{K}$ 
contains the maximal $\RR$-split
connected subgroup
$\ag{A}_{\spl}$ of $\ag{A}$.
Since $\ag{K} \leq \ag{H}$ by construction,
Theorem \ref{thm:density} follows.
\end{proof} 



\section{Abelian modules with a skew pairing}
\label{sec_modules}

Let $\fra$ be a real abelian Lie algebra and let $(V,\rho)$ be an
$\fra$-module. The module $(V,\rho)$ is called \emph{nilpotent} if all transformations $\rho(A)$, $A \in \fra$, are nilpotent.
A bilinear map $\langle\cdot,\cdot\rangle:V\times\fra\to\RR$  such that 
\[
\langle \rho(A)v,B\rangle=-\langle\rho(B)v,A\rangle
\text{ for all $A,B\in\fra$, $v\in V$}
\]
will be called a \emph{skew pairing} for $(V,\rho)$.

\begin{prop}  \label{prop:skewpair}
Let $\langle\cdot,\cdot\rangle$ be 
a skew pairing for  $(V,\rho)$. Then $(V,\rho)$ is nilpotent
or there exists a submodule $W \neq \zsp$ of $(V,\rho)$,  
which is contained in the radical $\fra^\perp_V  =  \{v\in V\mid \langle v,\cdot\rangle=0\}$ of $V$. 
\end{prop}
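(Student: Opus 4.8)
The plan is to analyze the structure of $(V,\rho)$ via the joint behavior of the operators $\rho(A)$, using the additive Jordan decomposition to reduce to the nilpotent case. First I would pick any $A \in \fra$ and write its action $\rho(A) = \rho(A)_{\rm ss} + \rho(A)_{\rm n}$; since $\fra$ is abelian and $(V,\rho)$ is a module, all the $\rho(B)$ commute with each other, hence (via the polynomial expressions $P,Q$ from Section \ref{sec_jordan}) with $\rho(A)_{\rm ss}$ and $\rho(A)_{\rm n}$ as well. So the semisimple and nilpotent parts interact well with the whole module structure, and $V$ decomposes into joint generalized eigenspaces. The key point will be to locate a nonzero submodule inside the radical $\fra^\perp_V$ whenever some $\rho(A)$ fails to be nilpotent.

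The main idea is: suppose $(V,\rho)$ is not nilpotent, so there is $A_0\in\fra$ with $\rho(A_0)_{\rm ss}\neq 0$. Consider the (complexified, if necessary) joint eigenspace decomposition of the commuting family $\{\rho(A)_{\rm ss} : A\in\fra\}$; these commuting semisimple operators split $V$ into weight spaces $V_\lambda$ indexed by linear functionals $\lambda\in\fra^*$, and each $V_\lambda$ is a submodule (being the kernel of polynomials in the $\rho(A)$). Take a weight $\lambda\neq 0$. The plan is to show that the skew pairing forces $V_\lambda$ — or at least the part of it where the nilpotent parts act trivially, i.e. the genuine joint eigenspace $\bigcap_{A}\ker(\rho(A)-\lambda(A)\I)$ — to lie in the radical. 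Indeed, for $v\in V_\lambda$ and any $B$, the element $\langle \rho(B)v, \cdot\rangle\in\fra^*$ can be computed using the skew pairing identity; iterating the identity and exploiting that $\rho(A)v = \lambda(A)v + (\text{nilpotent contribution})$ should give relations like $\lambda(A)\langle v,B\rangle = -\lambda(B)\langle v,A\rangle$ plus lower-order terms, and antisymmetrizing in $A,B$ together with $\lambda\neq 0$ should pin down $\langle v,\cdot\rangle$. After handling the nilpotent corrections (reducing to a top piece of a suitable filtration), this produces a nonzero submodule $W$ with $W\subset\fra^\perp_V$. One must of course descend from the complexification back to the real module, but since the construction is canonical (kernels of real polynomial operators, conjugation-stable weight spaces) this is routine.

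The step I expect to be the main obstacle is controlling the nilpotent parts $\rho(A)_{\rm n}$ cleanly: the weight space $V_\lambda$ is only a \emph{generalized} eigenspace, so $\rho(A)v \ne \lambda(A)v$ in general, and the skew-pairing relations therefore carry error terms involving $\rho(A)_{\rm n}v$. The right way to deal with this is to work with the filtration of $V_\lambda$ by the images (or kernels) of products of the nilpotent operators, pass to an appropriate associated graded piece or extremal layer on which the nilpotent parts act as zero, and run the eigenvalue argument there; one then has to check that this extremal layer is still a submodule and that the pairing restricted to it still satisfies the skew identity so that the conclusion $W\subset\fra^\perp_V$ transfers back up. An alternative, possibly cleaner, route: if \emph{all} weights are zero then every $\rho(A)$ is nilpotent and we are in the first alternative; otherwise fix a nonzero weight $\lambda$, and among all $\fra$-submodules on which every $\rho(A)-\lambda(A)\I$ is nilpotent, take an irreducible sub-object of the socle — on an irreducible module over a nilpotent-by-abelian situation the nilpotent parts must act trivially, so it is a genuine eigenline/eigenspace, and there the skew pairing argument applies directly to show it is annihilated. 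I would present whichever of these is shortest once the error-term bookkeeping is written out.
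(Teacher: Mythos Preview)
Your plan is sound and would work, but it is dramatically more elaborate than necessary. The paper's proof is four lines and uses no Jordan decomposition, no weight spaces, no complexification, and no filtrations. The observation you are missing is this: the skew identity with $A=B$ gives $\langle \rho(A)V,A\rangle=0$ for every $A$; now if some $\rho(A)^2\neq 0$, set $W=\rho(A)^2V$ (a submodule since $\fra$ is abelian), and for $w=\rho(A)v$ with $v\in\rho(A)V$ compute $\langle w,B\rangle=\langle\rho(A)v,B\rangle=-\langle\rho(B)v,A\rangle=0$, the last step because $\rho(B)v\in\rho(A)V$. If instead $\rho(A)^2=0$ for all $A$, then every $\rho(A)$ is nilpotent and we are in the first alternative.

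What your approach buys is a finer structural picture (you actually locate the submodule inside a specific weight component), but at the cost of the Jordan machinery, the real-versus-complex bookkeeping, and the socle/filtration argument to kill the nilpotent corrections. The paper's trick sidesteps all of this by never separating $\rho(A)$ into semisimple and nilpotent parts: the image $\rho(A)^2V$ already serves as the sought-for submodule, and the skew identity applied once does the whole job.
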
 
\begin{proof}
Observe that for any $A \in \fra$, $\langle\rho(A)V, A\rangle = 0$.
Suppose there exists $A \in \fra$ such that 
the submodule $W = \rho(A)^{2}V$ is non-zero.  
Let $w = \rho(A) v \in \rho(A)^{2}V$, where $v\in\rho(A)V$. 
Then, for all $B \in \fra$, $\langle w,B \rangle = \langle\rho(A) v,B\rangle = - \langle \rho(B)v,A\rangle = 0$.
The latter term is zero since $\rho(A)V$ is a submodule for $\fra$. Hence, $W$ is contained in $\fra^\perp_V $.
\end{proof}

\begin{cor} \label{cor:skewpair}
Let $\langle\cdot,\cdot\rangle$ be 
a skew pairing for  $(V,\rho)$. If 
$\fra^\perp_V $ contains no non-trivial submodule of  $(V,\rho)$,
then $(V, \rho)$ is nilpotent.\end{cor}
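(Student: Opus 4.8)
The plan is to obtain Corollary \ref{cor:skewpair} directly from Proposition \ref{prop:skewpair}, of which it is essentially the contrapositive. Proposition \ref{prop:skewpair} asserts that, for a skew pairing $\langle\cdot,\cdot\rangle$ on $(V,\rho)$, exactly one of two situations occurs: either $(V,\rho)$ is nilpotent, or the radical $\fra^\perp_V$ contains a submodule $W\neq\zsp$ of $(V,\rho)$. Since a skew pairing for $(V,\rho)$ is given by hypothesis, the proposition applies verbatim.

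So the argument I would give is: suppose $\fra^\perp_V$ contains no non-trivial submodule of $(V,\rho)$. Then the second alternative of Proposition \ref{prop:skewpair} cannot hold, because any such submodule $W$ would in particular be non-trivial (the one produced in the proof is $W=\rho(A)^2 V\neq\zsp$). Hence the first alternative holds, i.e.\ $(V,\rho)$ is nilpotent, which is the claim.

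I do not expect any genuine obstacle here; the proof is a one-line logical deduction. The only point deserving a word of care is the reading of ``non-trivial submodule'': since the zero submodule is always contained in the radical $\fra^\perp_V$, the hypothesis must be understood as excluding only \emph{nonzero} submodules, and this is exactly what is needed to contradict the existence of the $W\neq\zsp$ furnished by the proposition.
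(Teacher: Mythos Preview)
Your proposal is correct and matches the paper's intent: the corollary is stated without proof precisely because it is the immediate logical consequence of Proposition~\ref{prop:skewpair} that you describe. One tiny quibble: the proposition gives an inclusive ``or'', not ``exactly one of two situations'' (a nilpotent module may well have nonzero submodules in $\fra^\perp_V$), but your deduction only uses that if the second alternative fails then the first holds, so the argument is sound.
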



\section{The radical of a nil-invariant scalar product}
\label{sec_h_0}

\subsection{Metric Lie algebras.} \label{sec:mla}
Let $\frg$ be a  Lie algebra and $\met$ a symmetric bilinear form on $\frg$. The pair $(\frg, \met)$ is called a
\emph{metric Lie algebra}, $\met$ is called a \emph{scalar product} (sometimes also metric) on $\frg$.   

The form $\met$ is called \emph{non-degenerate} if
\[
\frr  =  \frg^\perp  = \{ X \in \frg \mid \langle X, \frg\rangle  = \zsp \}
\]
is trivial. The subspace $\frr \subset \frg$ is called the
\emph{metric radical} of $(\frg, \met)$.

The maximal nilpotent ideal $\frn$ of $\frg$ is called
the \emph{nilradical}.

For $X, Y \in \frg$, we write $X \perp Y$ if  $  \langle X, Y \rangle = 0$. Moreover, if $\frv \subset \frg$ is a subspace then $\frv^{\perp} = \{ X \in \frg \mid \langle X, \frv \rangle = \zsp \}$. The subspace $\frv$ is called \emph{totally isotropic} if $\frv \subset \frv^{\perp}$. 
The \emph{signature} of $\met$ is the dimension of a 
maximal totally isotropic subspace.

Assume that $\met$ is non-degenerate. 
Then, given a totally isotropic subspace $\fru$ of $\frg$,
we can find a non-degenerate subspace $\frw$ such that
$\fru^\perp=\frw\oplus\fru$, and a totally isotropic subspace $\frv \subset \frw^\perp$ 
such that $\frv$ is dually paired with $\fru$ by $\met$, see 
\cite[Chapter XV, Lemma 10.1]{lang}. 
The resulting decomposition
\[
\frg = \frv \oplus\frw\oplus\fru 
\]
is called a \emph{Witt decomposition} for $\fru$. 

Let $\varphi: \frg \to \frg$ be a linear map. Then $\met$ is called \emph{$\varphi$-invariant}  if $\varphi$ is skew-symmetric with respect to $\met$, that is, if 
$\langle \varphi X,Y\rangle = -\langle X, \varphi Y\rangle$
for all $X,Y \in\frg$.

We put 
\[
\inv\left(\frg,\met\right)
=  \left\{X\in\frg \mid \langle[X,Y],Z\rangle=-\langle Y,[X,Z]\rangle
\text{ for all } Y,Z\in\frg \right\}.
\]
If $\frh$ is a subspace of $\inv(\frg,\met)$ then we say $\met$ is \emph{$\frh$-invariant}. Moreover, $\met$ is called \emph{invariant}   if 
$\inv(\frg,\met) = \frg$.

\begin{definition}\label{def_nilinvariant}
The metric Lie algebra $(\frg, {\met})$ is called \emph{nil-invariant}
if  $\met$ is invariant under the nilpotent part $\ad(X)_{\rm n}$ in the additive Jordan decomposition of  $\ad(X)$ for all $X\in\frg$.
\end{definition}
\subsection{Nil-invariant metric Lie algebras}

The metric Lie algebra $(\frg, {\met})$ is called \emph{reduced} if the metric radical  $  \frr = \frg^{\perp}$ 
does not contain any non-trivial ideal of $\frg$.
The main result of this section is:

\begin{prop}\label{prop_radicalzero}
Let $\frg$ be a solvable Lie algebra and  $\met$ a 
nil-invariant symmetric bilinear form. If $(\frg, \met)$ is reduced, then the metric radical $\frr$ is zero, that is, the metric $\met$ is non-degenerate. 
\end{prop}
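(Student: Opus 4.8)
The plan is to show that the metric radical $\frr$, which is an ideal of $\frg$ by general considerations about invariant-type radicals, must be trivial whenever $(\frg,\met)$ is reduced. The key is to extract from $\frr$ a piece that is both a nonzero ideal of $\frg$ and contained in $\frr$, contradicting reducedness — unless $\frr$ is already zero. First I would record the structural facts: since $\met$ is nil-invariant, for every $X\in\frg$ the nilpotent operator $\ad(X)_{\rm n}$ is skew with respect to $\met$, hence preserves $\frr$; and $\ad(X)$ itself maps $\frr$ to $\frr$ because $\frr$ is an ideal (one checks $\langle[X,r],\frg\rangle = -\langle r,[X,\frg]\rangle$ using nil-invariance only after decomposing $\ad(X)$, so some care is needed here — see below). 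So $\frr$ is an $\ad(\frg)$-submodule on which all the nilpotent parts act skew-symmetrically relative to the (identically zero!) restriction of $\met$ to $\frr$; that observation alone is vacuous, so the real input must come from how $\frr$ sits inside $\frg$ via the pairing $\frg/\frr \times \frr \to$ something, or rather from the induced pairing on $\frg$.

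The route I would actually pursue goes through Corollary \ref{cor:skewpair}. Pick an abelian subalgebra or, more cleverly, work with a single element: for $X\in\frg$, consider the nilpotent operator $N = \ad(X)_{\rm n}$ acting on $V = \frg$, and define a pairing $\langle\cdot,\cdot\rangle: \frg \times \RR X \to \RR$ by $(v, tX)\mapsto t\langle v, X\rangle$. Nil-invariance of $\met$, which says $\langle NY,Z\rangle = -\langle Y,NZ\rangle$, specialized appropriately, should make this (or a suitable multi-variable generalization over a nilpotent subalgebra $\fra$ of $\ad(\frg)_{\rm n}$-type operators) a skew pairing in the sense of Section \ref{sec_modules}. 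Then Proposition \ref{prop:skewpair} says either the module is nilpotent — i.e. all these operators act nilpotently, which for $\ad(X)_{\rm n}$ is automatic but the point is to get it for the full $\ad$, forcing $\frg$ to be close to nilpotent — or there is a nonzero submodule $W$ inside the radical $\fra^\perp_V$ of the pairing. The radical of that pairing is exactly $\{v : \langle v, X\rangle = 0\}$ in the one-variable case, which is too big; so the correct setup is to let $\fra$ be the linear span (or the Lie algebra generated, which here will be abelian or nilpotent) of a well-chosen family of nilpotent parts, set up the skew pairing $\frg \times \fra \to \RR$, and identify its radical $\fra^\perp_\frg$ with (or relate it to) the metric radical $\frr$. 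Concretely, I expect one takes $\fra$ to be a Cartan-type or a maximal "nil" piece so that $\fra^\perp_\frg \subseteq \frr$ after also using invariance under the semisimple parts, which one gets from Theorem \ref{thm:density}/nil-invariance bookkeeping or handles separately.

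With that in hand, the dichotomy from Corollary \ref{cor:skewpair} becomes: either $\fra^\perp_\frg$ contains a nonzero $\fra$-submodule — and one then upgrades this to a nonzero $\ad(\frg)$-ideal contained in $\frr$ (using that the ambient algebra is solvable, so one can descend along the lower central series or intersect with the center of a nilpotent ideal to land inside an honest ideal), contradicting that $(\frg,\met)$ is reduced — or else $(\frg,\met)$, viewed through this module, is nilpotent, meaning the relevant operators are all nilpotent. In the latter case $\ad(\frg)$ consists of nilpotent operators on the complement, and a short argument (nilpotency of $\frg$ modulo $\frr$, combined with nil-invariance now being full invariance on a nilpotent algebra, plus solvability) should force $\frr$ to be an ideal that is its own perp and contains no nonzero ideal, hence is zero — or rather force the whole obstruction to collapse.

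The main obstacle I anticipate is the bookkeeping that turns "submodule in the radical of the skew pairing" into "nonzero ideal of $\frg$ inside $\frr$": the skew pairing $\frg\times\fra\to\RR$ is only a module over $\fra$, and $\fra$ is a proper, carefully chosen subalgebra (built from nilpotent parts), so a priori $W$ is only $\fra$-invariant, not $\ad(\frg)$-invariant. Bridging that gap — showing the $\ad(\frg)$-ideal generated by $W$ still lies in $\frr$ — will require exploiting both nil-invariance (to control the nilpotent parts of all $\ad(Y)$) and the solvability of $\frg$ (to run an induction or use that $[\frg,\frg]$ is nilpotent, so $\ad([\frg,\frg])$ consists of nilpotent operators already covered by $\fra$ up to conjugacy). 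I would also need to verify carefully that $\frr$ is indeed an $\ad(\frg)$-invariant subspace in the first place, since $\met$ is only assumed nil-invariant, not invariant — this is where one must argue that $\langle[Y,r],Z\rangle = \langle(\ad(Y)_{\rm ss}+\ad(Y)_{\rm n})r,Z\rangle$ and show the semisimple part also cannot push $r$ out of $\frr$, presumably because $\ad(Y)_{\rm ss}$ is a polynomial in $\ad(Y)$ with zero constant term, hence maps $\frr$ (a submodule) into $\frr$.
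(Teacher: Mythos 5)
There is a genuine gap, and it sits exactly at the point you flag yourself: you need $\frr$ to be $\ad(\frg)$-invariant (or at least to extract an $\ad(\frg)$-stable piece of it) before reducedness can be brought to bear, and your proposed justification is circular. Nil-invariance does give that each $\ad(X)_{\rm n}$ preserves $\frr$ (skew operators preserve the radical), but your argument for the semisimple part -- ``$\ad(Y)_{\rm ss}$ is a polynomial in $\ad(Y)$ with zero constant term, hence maps $\frr$ (a submodule) into $\frr$'' -- presupposes that $\frr$ is $\ad(Y)$-invariant, which is precisely what is in question. In the paper the statement that $\frr$ is an ideal is not an input but the output: the proof of Proposition \ref{prop_radicalzero} ends by showing the much stronger fact $[\frg,\frr]=\zsp$, and Corollary \ref{cor_radicalzero} (the radical is an ideal, without the reducedness hypothesis) is then \emph{deduced} from the proposition, not used to prove it. So the whole difficulty of the proposition is concentrated in the step you defer.

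Your use of the skew-pairing dichotomy is also aimed at the wrong pairing. In the paper, Corollary \ref{cor:skewpair} is applied to the pairing induced by $\met$ between a totally isotropic complement $\fra$ (with $\frg=\fra\oplus\frj_0^{\perp}$) and the characteristic ideal $\frj_0=\zen(\frn)\cap[\frg,\frn]$, whose radical is $\frj_0\cap\frr$; reducedness (via $[\frj_0^\perp,\frj_0]\subset\frr$) shows this radical contains no nonzero $\fra$-submodule, and the dichotomy then yields that $\fra$ acts nilpotently on $\frj_0$ and that $\frj_0\cap\frr=\zsp$ (Proposition \ref{prop_jh}). Your proposal instead tries to pair $\frg$ against a space of operators built from nilpotent parts and to identify the radical of that pairing with $\frr$, which, as you note, does not come out right even in the one-variable case, and no substitute for $\frj_0$ is identified. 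Beyond this, the conclusion $[\frg,\frr]=\zsp$ requires the chain of structural facts in Lemma \ref{lem_nh0} -- $[\frn,\frr]=\zsp$, $\frr\subset\zen(\frn)$, $[\frg,\frr]\subset\frj_0$, $[\frj_0^\perp,\frj_0\oplus\frr]=\zsp$ -- none of which your sketch produces, and your ``nilpotent branch'' of the dichotomy ends with the unproved hope that the obstruction collapses. The toolbox (skew pairings, reducedness killing ideals inside $\frr$) is the right one, but the argument as proposed does not close.
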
 

This implies: 

\begin{cor}\label{cor_radicalzero}
Let $\frg$ be a solvable Lie algebra and  $\met$ a 
nil-invariant symmetric bilinear form. Then the metric radical $\frr$ for $\met$ is an ideal in $\frg$. 
\end{cor}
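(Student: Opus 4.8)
The plan is to deduce Corollary \ref{cor_radicalzero} from Proposition \ref{prop_radicalzero} by a quotient argument. First I would observe that it suffices to show $\frr$ is a subalgebra closed under brackets with all of $\frg$; equivalently, that $[\frg,\frr]\subseteq\frr$. The natural strategy is to pass to the reduced situation: let $\fri$ be the largest ideal of $\frg$ contained in the metric radical $\frr$ (this exists since the sum of two ideals contained in $\frr$ is again an ideal contained in $\frr$, as $\frr$ is a subspace). Then $\frg/\fri$ carries the induced symmetric bilinear form $\met'$ defined by $\langle X+\fri, Y+\fri\rangle' = \langle X,Y\rangle$, which is well-defined precisely because $\fri\subseteq\frr=\frg^\perp$. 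One checks that $(\frg/\fri,\met')$ is reduced: if $\bar\frj$ were a nonzero ideal of $\frg/\fri$ lying in the metric radical of $\met'$, its preimage $\frj$ in $\frg$ would be an ideal with $\langle\frj,\frg\rangle\subseteq\fri\subseteq\frr$, hence $\langle\frj,\frg\rangle = \zsp$, so $\frj\subseteq\frr$ is an ideal strictly larger than $\fri$, a contradiction.

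Next I would verify that nil-invariance descends to the quotient. For $X\in\frg$, since $\fri$ is an ideal, $\ad_\frg(X)$ preserves $\fri$ and induces $\ad_{\frg/\fri}(\bar X)$; by the remarks in Section \ref{sec_jordan} on Jordan decompositions of induced maps, the nilpotent part $\ad_{\frg/\fri}(\bar X)_{\rm n}$ is the map induced by $\ad_\frg(X)_{\rm n}$. Since $\ad_\frg(X)_{\rm n}$ is skew with respect to $\met$ and $\met'$ is the form induced on the quotient, $\ad_{\frg/\fri}(\bar X)_{\rm n}$ is skew with respect to $\met'$. Thus $(\frg/\fri,\met')$ is a reduced nil-invariant metric Lie algebra on a solvable Lie algebra, and Proposition \ref{prop_radicalzero} applies: its metric radical is zero. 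But the metric radical of $\met'$ is exactly $\frr/\fri$. Hence $\frr = \fri$, which is an ideal of $\frg$, proving the corollary.

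The only real point requiring care — and the step I expect to be the main (though modest) obstacle — is confirming that the Jordan decomposition of $\ad_\frg(X)$ is compatible with passage to the quotient by the invariant ideal $\fri$, i.e. that $(\ad_\frg(X)_{\rm n})$ induces $(\ad_{\frg/\fri}(\bar X))_{\rm n}$. This is exactly the statement recalled in Section \ref{sec_jordan}: an $A$-invariant subspace $U\subseteq V$ is $A_{\rm ss}$- and $A_{\rm n}$-invariant, and the Jordan decomposition of $A$ induces that of $A_{V/U}$; applying this with $V=\frg$, $U=\fri$, $A=\ad_\frg(X)$ gives what is needed. Everything else is formal bookkeeping about induced bilinear forms and ideals, so the proof is short once this compatibility is in hand.
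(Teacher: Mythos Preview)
Your argument is correct and is exactly the intended deduction: the paper states Corollary~\ref{cor_radicalzero} as an immediate consequence of Proposition~\ref{prop_radicalzero} (``This implies:'') without spelling out a proof, and your quotient by the maximal ideal $\fri\subseteq\frr$ together with the compatibility of Jordan decompositions from Section~\ref{sec_jordan} is precisely the standard way to make that implication explicit. One cosmetic point: the phrase ``$\langle\frj,\frg\rangle\subseteq\fri$'' is a type mismatch (the left side lies in $\RR$); you simply mean that $\bar\frj$ lying in the radical of $\met'$ gives $\langle\frj,\frg\rangle=0$, hence $\frj\subseteq\frr$.
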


Furthermore we show: 

\begin{cor}\label{cor_znzg}
Let $\frg$ be a solvable Lie algebra with a nil-invariant
symmetric bilinear form $\met$ and let $\zen(\frg)$ be 
the center of $\frg$. If $(\frg, \met)$ is reduced, then:
\begin{enumerate}
\item
$\zen(\frn)=\zen(\frg)$.  
\item
If $\frg$ is not abelian, then $\zen(\frg)$ contains a non-trivial totally isotropic charac\-teristic ideal of $\frg$. In particular, $\zen(\frg) \neq \zsp$.
\end{enumerate}
\end{cor}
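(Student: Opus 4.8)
The plan is to prove Corollary \ref{cor_znzg} by exploiting Proposition \ref{prop_radicalzero} together with nil-invariance. Throughout, assume $(\frg,\met)$ is reduced, so by Proposition \ref{prop_radicalzero} the form $\met$ is non-degenerate; this is what lets us pass between subspaces and their orthogonal complements without loss of dimension.

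\textbf{Part (1): $\zen(\frn)=\zen(\frg)$.} The inclusion $\zen(\frg)\subseteq\zen(\frn)$ is trivial since $\frn\subseteq\frg$. For the reverse, let $X\in\zen(\frn)$. The idea is that $\zen(\frn)$ is a characteristic abelian ideal of $\frg$, and $\frg$ acts on it by the adjoint representation; one wants to show this action is trivial. First note that for $X\in\zen(\frn)$ and $Y\in\frg$, the bracket $[Y,X]$ again lies in $\zen(\frn)$ (as $\zen(\frn)$ is an ideal), and since $\frg$ is solvable, $\ad(Y)$ restricted to $\zen(\frn)$ has all its Jordan components behaving controllably. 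I would argue that for every $Y\in\frg$, the semisimple part $\ad(Y)_{\rm ss}$ acts on $\zen(\frn)$ — but since $\zen(\frn)$ is in the nilradical-closure sense tied to $\frn$, and $[\frg,\frg]\subseteq\frn$ acts nilpotently on everything after passing to the appropriate quotient — one shows $\ad(Y)|_{\zen(\frn)}$ is nilpotent, hence equals $\ad(Y)_{\rm n}|_{\zen(\frn)}$, which is skew by nil-invariance. Then for $X\in\zen(\frn)$, $Y\in\frg$, and any $Z\in\frg$ we get $\langle[Y,X],Z\rangle=-\langle X,[Y,Z]\rangle$; choosing $Z$ to range over $\frg$ and using that $[Y,X]\in\zen(\frn)\subseteq\frn$ while playing $[Y,X]$ off against the non-degeneracy, one forces $[Y,X]$ into the metric radical $\frr=\zsp$, so $[Y,X]=0$ and $X\in\zen(\frg)$. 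The delicate point is correctly identifying why $\ad(Y)$ acts nilpotently on $\zen(\frn)$; this should follow because $\zen(\frn)^{\ad(Y)_{\rm ss}\text{-eigenspace for nonzero eigenvalue}}$ would generate, via brackets with $\frn$, a nonzero piece contradicting centrality — or more cleanly, one uses that any $\ad$-semisimple element acting on an abelian ideal with a nontrivial weight produces structure incompatible with $\zen(\frn)$ being exactly the center of $\frn$ in a reduced metric Lie algebra.

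\textbf{Part (2): finding the isotropic central ideal.} Assume $\frg$ is not abelian. By Part (1), $\zen(\frg)=\zen(\frn)$, and since $\frn$ is a nonzero nilpotent Lie algebra (it is nonzero because $\frg$ is solvable non-abelian, so $[\frg,\frg]\subseteq\frn$ is a nonzero nilpotent ideal, forcing $\zen(\frn)\ne\zsp$), we have $\zen(\frg)\ne\zsp$. This already gives the last sentence. For the isotropic characteristic ideal: consider $\zen(\frg)\cap\zen(\frg)^\perp$, the metric radical of the restriction of $\met$ to $\zen(\frg)$; this is a characteristic ideal (both $\zen(\frg)$ and orthogonality are characteristic/intrinsic) and it is totally isotropic. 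So it suffices to show it is nonzero, i.e. that $\met$ restricted to $\zen(\frg)$ is degenerate. Suppose not — then $\zen(\frg)$ is non-degenerate, giving a splitting $\frg=\zen(\frg)\oplus\zen(\frg)^\perp$ as a direct sum of ideals (the complement is an ideal because $\zen(\frg)$ is central and $\met$ is nil-invariant, so $[\frg,\zen(\frg)^\perp]\subseteq\zen(\frg)^\perp$ follows by skew-symmetry of the relevant operators against $\zen(\frg)$). But then $\zen(\frg)$ is a nonzero central ideal that is itself non-degenerate, and on $\zen(\frg)^\perp$ the center contains... one pushes for a contradiction with reducedness or with $\zen(\frg)=\zen(\frn)$. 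Actually the cleanest route: I expect one shows directly that a nonzero element of $\zen(\frg)$ landing in $\frn$ must, by nilpotency of $\frn$ and nil-invariance, be orthogonal to $[\frn,\frn]^{\perp\perp}$-type subspaces — concretely, if $X\in\zen(\frg)$ with $X\notin\zen(\frg)^\perp$, pick $Z$ with $\langle X,Z\rangle\ne0$; using nil-invariance of $\ad(W)_{\rm n}$ for suitable $W\in\frn$ with $\ad(W)_{\rm n}Z\ne0$ one derives $\langle X,\ad(W)_{\rm n}Z\rangle = -\langle\ad(W)_{\rm n}X,Z\rangle=0$ since $X$ is central, and iterating down a Jordan string for a nilpotent operator on $\frg$ whose image meets $\langle X,\cdot\rangle$ non-trivially yields the contradiction. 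This degeneracy argument is the main obstacle: one must produce, from $\frg$ non-abelian, an actual nilpotent operator $\ad(W)_{\rm n}$ whose Jordan string detects a vector non-orthogonal to $\zen(\frg)$, which requires knowing $\zen(\frg)\not\subseteq\frg^{\ad}$-fixed-space-trivially — and here one invokes that $\frg$ is non-abelian so $\frn\supsetneq\zsp$ carries genuine nilpotent action, combined with non-degeneracy from Proposition \ref{prop_radicalzero} to guarantee some bracket-orbit of a central vector is detected by the form.

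In summary, Part (1) reduces to showing $\ad(Y)$ acts nilpotently on $\zen(\frn)$ and then a metric-radical-triviality argument; Part (2) reduces to showing $\met|_{\zen(\frg)}$ is degenerate, which I expect to be the technically hardest step and where the full force of nil-invariance plus non-degeneracy (Proposition \ref{prop_radicalzero}) is needed, with characteristicity and total isotropy being formal once non-triviality is established.
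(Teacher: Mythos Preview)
Your outline for Part~(1) points in the right direction but leaves the two essential steps as assertions. First, the nilpotency of $\ad(Y)|_{\zen(\frn)}$ does not follow from the vague considerations you offer; the paper establishes it by applying the skew-pairing criterion (Corollary~\ref{cor:skewpair}) to the ideal $\frj_0=\zen(\frn)\cap[\frg,\frn]$ via Proposition~\ref{prop_jh}, and then using $[\frg,\zen(\frn)]\subset\frj_0$. Second, even granting nilpotency, your identity $\langle[Y,X],Z\rangle=-\langle X,[Y,Z]\rangle$ is not what nil-invariance gives you: one only obtains $\langle[Y,X],Z\rangle=-\langle X,\ad(Y)_{\rm n}Z\rangle$, and it remains to show $\ad(Y)_{\rm n}Z\perp\zen(\frn)$. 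The paper supplies this via $\ad(Y)_{\rm n}Z\in[\frg,\frn]$ together with the orthogonality $[\frg,\frn]\perp\zen(\frn)$ of Lemma~\ref{lem_j}, which is an easy consequence of $\frn$-invariance that you never invoke.

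Part~(2) has a more serious gap. Your candidate $\zen(\frg)\cap\zen(\frg)^\perp$ is totally isotropic and central, but your claim that it is \emph{characteristic} is unjustified: orthogonal complements depend on $\met$, which is not preserved by arbitrary automorphisms of $\frg$, so ``orthogonality is intrinsic'' is simply false as stated. Your two attempts to prove non-triviality are also not arguments --- the orthogonal splitting $\frg=\zen(\frg)\oplus\zen(\frg)^\perp$ produces no contradiction with anything, and the Jordan-string idea gives no mechanism for reaching a vector outside $\zen(\frg)^\perp$. The paper sidesteps all of this by working with the purely Lie-theoretically defined ideal $\frj_0=\zen(\frn)\cap[\frg,\frn]$: it is characteristic by construction, nonzero whenever $\frg$ is non-abelian by the elementary Proposition~\ref{prop_j0}, totally isotropic by Lemma~\ref{lem_j}, and contained in $\zen(\frg)$ by Part~(1).
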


The proofs of Proposition \ref{prop_radicalzero} and Corollary \ref{cor_znzg} will be given in Section \ref{sec_j0}.


%

\subsection{Totally isotropic ideals in $\boldsymbol{\zen(\frn)}$}
\begin{lem} Let $\frr = \frg^\perp$ be the metric radical. Then 
\label{lem_invrad}
\begin{enumerate}
\item 
$[\inv(\frg,\met), \frr]\subseteq \frr$. 
\item 
Let $\frj\subset\inv(\frg,\met)$
be an ideal in $\frg$.
Then $[\frj^\perp,\frj]\subset\frj\cap \frr$.
\end{enumerate}
\end{lem}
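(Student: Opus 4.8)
For part (1), the key observation is that $X \in \inv(\frg,\met)$ acts on $\frg$ as a skew-symmetric operator $\ad(X)$ with respect to $\met$. So for any $R \in \frr$ and $Y \in \frg$, we have $\langle [X,R], Y \rangle = -\langle R, [X,Y] \rangle = 0$, since $R$ lies in the metric radical and $[X,Y] \in \frg$. Hence $[X,R] \in \frg^\perp = \frr$, which gives $[\inv(\frg,\met), \frr] \subseteq \frr$.

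For part (2), let $\frj \subset \inv(\frg,\met)$ be an ideal of $\frg$. I want to show $[\frj^\perp, \frj] \subset \frj \cap \frr$. First, since $\frj$ is an ideal, $\frj^\perp$ is also an ideal (the orthogonal complement of an ideal with respect to an invariant-enough form is an ideal: for $X \in \frj^\perp$, $Z \in \frg$, and $J \in \frj$, one has $\langle [Z,X], J \rangle = -\langle X, [Z,J] \rangle = 0$ using that $\met$ is $\frj$-invariant — wait, here I need $[Z,J] \in \frj$, which holds since $\frj$ is an ideal; but the skew-symmetry of $\ad(Z)$ is not available. Instead use: $\langle [Z,X],J\rangle$ — apply the invariance of the element $J \in \inv(\frg,\met)$: $\langle [J, Z], X\rangle = -\langle Z, [J,X]\rangle$, and $[J,X] \in \frj^\perp$?). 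Let me restructure: the cleanest route is to show $[\frj^\perp,\frj] \subseteq \frj$ and $[\frj^\perp,\frj] \subseteq \frr$ separately. The containment in $\frj$ is immediate since $\frj$ is an ideal. For the containment in $\frr$: take $X \in \frj^\perp$, $J \in \frj$, and any $Y \in \frg$; then $\langle [X,J], Y \rangle = \langle [J,Y], X\rangle$ — no. Use that $J \in \inv(\frg,\met)$, so $\ad(J)$ is skew: $\langle [J,X], Y\rangle = -\langle X, [J,Y]\rangle = 0$ because $[J,Y] \in \frj$ (as $\frj$ is an ideal) and $X \in \frj^\perp$. Since $[X,J] = -[J,X]$, this shows $[X,J] \in \frg^\perp = \frr$. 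Combining, $[\frj^\perp,\frj] \subseteq \frj \cap \frr$.

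The main (minor) obstacle is keeping track of which skew-symmetry is available: the form is only assumed $\inv(\frg,\met)$-invariant, not invariant, so in part (2) one must use skew-symmetry of $\ad(J)$ for $J \in \frj \subseteq \inv(\frg,\met)$ rather than of $\ad(X)$ for the element $X \in \frj^\perp$. Once this is noted, both parts are one-line computations, so there is no serious difficulty.
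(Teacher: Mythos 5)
Your proof is correct and follows essentially the same route as the paper: part (1) is the one-line skew-symmetry computation, and for part (2) the paper likewise uses that $\ad(Z)$ is skew for $Z\in\frj\subset\inv(\frg,\met)$ together with $[\frg,\frj]\subset\frj$ to get $\langle[Y,Z],X\rangle=-\langle Y,[X,Z]\rangle=0$, hence $[Y,Z]\in\frj\cap\frr$. The digression about $\frj^\perp$ being an ideal is unnecessary (as you noted), but your final argument is exactly the intended one.
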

\begin{proof}
For the proof of (2)  let  $Y\in\frj^\perp$ and $Z\in\frj$.
Since $\frj$ is an ideal, for any $X\in\frg$,
\[
\langle [Y,Z],X\rangle=-\langle Y,[X,Z]\rangle=0
\]
holds.
So $[Y,Z]\perp \frg$. Hence $[Y,Z]\in\frj\cap\frr$.
\end{proof}

\begin{lem}\label{lem_j} 
Let $\frn$ be an ideal in $\frg$ with $[\frg, \frg] \subset \frn$.
If $\met$ is $\frn$-invariant, then:
\begin{enumerate}
\item
$[\frg,\frn]\perp\zen(\frn)$.
\item
$ \frz(\frn)\cap[\frg,\frn]$
is a totally isotropic ideal in $\frg$.
\end{enumerate}
Let $\frj\subset\zen(\frn)$ be an ideal in $\frg$.
If 
$\met$ is
nil-invariant then:
\begin{enumerate}\setcounter{enumi}{2}
\item
$\frj^\perp$ is an ideal in $\frg$.
\end{enumerate}
\end{lem}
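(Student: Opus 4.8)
\textbf{Proof proposal for Lemma \ref{lem_j}.}

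The plan is to handle the three claims in order, each by a direct computation with the defining invariance properties, using the hypothesis $[\frg,\frg]\subset\frn$ to control which brackets land inside $\frn$.

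For (1): the point is that $[\frg,\frn]\subset[\frg,\frg]\subset\frn$, so for $X,Y\in\frg$ the element $[X,Y]$ lies in $\frn$ whenever one of the arguments does; more to the point, for $N\in\frn$ and $X\in\frg$ we have $[X,N]\in\frn$. Given $Z\in\zen(\frn)$, I want $\langle[X,N],Z\rangle=0$. Since $\met$ is $\frn$-invariant and $N\in\frn$, skew-symmetry of $\ad(N)$ gives $\langle[X,N],Z\rangle=-\langle[N,X],Z\rangle=\langle X,[N,Z]\rangle$, and $[N,Z]=0$ because $Z$ is central in $\frn$. This proves $[\frg,\frn]\perp\zen(\frn)$.

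For (2): write $\frj_0=\zen(\frn)\cap[\frg,\frn]$. It is an ideal in $\frg$: $\zen(\frn)$ is a characteristic ideal of $\frg$ (as $\frn$ is the nilradical, hence characteristic), and $[\frg,\frn]$ is an ideal in $\frg$ since $[\frg,[\frg,\frn]]\subset[\frg,\frn]$ by the Jacobi identity together with $[\frn,[\frg,\frn]]\subset\frn$ — more simply, $[\frg,\frn]$ is the image of the $\frg$-equivariant map $\frg\otimes\frn\to\frg$, hence a submodule, and it is contained in $\frn$. Total isotropy is immediate from (1): if $Z,Z'\in\frj_0$, then $Z'\in[\frg,\frn]$ and $Z\in\zen(\frn)$, so $\langle Z,Z'\rangle=0$.

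For (3): this is where the full strength of nil-invariance (rather than just $\frn$-invariance) is needed, and I expect it to be the main obstacle. Let $\frj\subset\zen(\frn)$ be an ideal in $\frg$; I must show $[\frg,\frj^\perp]\subset\frj^\perp$, i.e. for $X\in\frg$, $Y\in\frj^\perp$, $Z\in\frj$ that $\langle[X,Y],Z\rangle=0$. The natural move is $\langle[X,Y],Z\rangle=-\langle Y,[X,Z]\rangle$ provided $\ad(X)$ were skew — which it need not be. Instead I would decompose $\ad(X)=\ad(X)_{\rm ss}+\ad(X)_{\rm n}$; the nilpotent part $\ad(X)_{\rm n}$ is skew by nil-invariance, so $\langle\ad(X)_{\rm n}Y,Z\rangle=-\langle Y,\ad(X)_{\rm n}Z\rangle$, and $\ad(X)_{\rm n}Z\in\frj$ because $\frj$ is $\ad(X)$-invariant and hence $\ad(X)_{\rm n}$-invariant (the polynomial expression $Q(\ad(X))$ with zero constant term preserves any $\ad(X)$-invariant subspace, as recalled in Section \ref{sec_jordan}), so this term vanishes as $Y\in\frj^\perp$. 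It remains to handle the semisimple part: I claim $\ad(X)_{\rm ss}Z=0$ for $Z\in\frj$. Indeed $\frj\subset\zen(\frn)$ and $\zen(\frn)$ consists of nilpotent-acting elements in a suitable sense — more precisely, since $\frg$ is solvable, $\ad(X)$ acts on $\frn$, and I would argue that $\ad(X)_{\rm ss}$ vanishes on $\zen(\frn)\cap[\frg,\frg]$, or use that $\zen(\frn)$ is central in $\frn$ and $[\frg,\frg]\subset\frn$ to see that the semisimple part of $\ad(X)$ restricted to $\frj$ is constrained. The cleanest route: restrict attention to $\ad(X)_{\rm ss}$ acting on $\frj\subset\frn$; since $\frn$ is nilpotent and $\ad(X)|_\frn$ normalizes $\frn$, while the eigenvalues of $\ad(X)_{\rm ss}$ on $\frj$ must be compatible with $\frj\subset\zen(\frn)\subset[\frg,\frn]+\dots$ — I will need to pin down the exact reason $\ad(X)_{\rm ss}|_\frj=0$, possibly by noting that $\ad(X)_{\rm ss}=P(\ad(X))$ maps $\frj$ into $[\frg,\frj]\subset\frj\cap(\text{something})$ and iterating, or by invoking that on a solvable Lie algebra the semisimple part of $\ad(X)$ kills the nilradical's center because that center lies in every Cartan-related decomposition trivially. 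Once $\ad(X)_{\rm ss}Z=0$ is established, $\langle[X,Y],Z\rangle=\langle\ad(X)_{\rm ss}Y,Z\rangle+\langle\ad(X)_{\rm n}Y,Z\rangle$, and I would move the semisimple part back: $\langle\ad(X)_{\rm ss}Y,Z\rangle=\langle\ad(X)Y,Z\rangle-\langle\ad(X)_{\rm n}Y,Z\rangle$ is circular, so instead I use that $\ad(X)_{\rm ss}$ and $\ad(X)_{\rm n}$ commute and are simultaneously expressible as polynomials; the vanishing $\ad(X)_{\rm ss}Z=0$ together with the skewness of $\ad(X)_{\rm n}$ on all of $\frg$ should give $\langle[X,Y],Z\rangle=0$ after a short manipulation. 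The genuine difficulty is thus the linear-algebra fact that $\ad(X)_{\rm ss}$ annihilates $\zen(\frn)$ for solvable $\frg$; I would isolate this as a sublemma and prove it using that $\frn$ is the nilradical (so $\ad(X)_{\rm n}|_\frn$ together with $\frn$ generates a nilpotent ideal containing $\frn$, forcing $\ad(X)_{\rm ss}$ to have a special form on $\frn$).
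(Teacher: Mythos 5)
Your parts (1) and (2) are correct and essentially identical to the paper's argument: skew-symmetry of $\ad(N)$ for $N\in\frn$ moves the bracket onto the central element, and total isotropy of $\zen(\frn)\cap[\frg,\frn]$ is then immediate. (One small imprecision: you justify that $\zen(\frn)$ is an ideal of $\frg$ by saying $\frn$ is the nilradical, but the lemma only assumes $\frn$ is an ideal containing $[\frg,\frg]$; the ideal property still holds for any ideal $\frn$ by the Jacobi identity, so this is cosmetic.)

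In part (3), however, there is a genuine gap, and it sits exactly where you flag uncertainty. First, the sublemma you propose to isolate --- that $\ad(X)_{\rm ss}$ annihilates $\zen(\frn)$ for solvable $\frg$ --- is false as a structural statement: in the two-dimensional algebra $\frg=\Span\{X,N\}$ with $[X,N]=N$ one has $\frn=\zen(\frn)=\RR N$ and $\ad(X)=\ad(X)_{\rm ss}$ acts as the identity on it; moreover this example carries nil-invariant symmetric forms (e.g.\ $\langle X,X\rangle=1$, $N\perp\frg$), so the failure persists under the lemma's hypotheses. (The related fact $[\frg,\zen(\frn)]=\zsp$ does appear in the paper, but only later, for \emph{reduced} algebras, as Lemma \ref{lem_nh0}(6), whose proof depends on Proposition \ref{prop_jh} and hence on part (3) of the present lemma --- so that route would also be circular.) Second, even granting $\ad(X)_{\rm ss}Z=0$ for $Z\in\frj$, this does not control the term you actually need, $\langle\ad(X)_{\rm ss}Y,Z\rangle$ with $Y\in\frj^\perp$, because $\ad(X)_{\rm ss}$ is not known to be skew; you acknowledge the circularity and defer to ``a short manipulation'', but that manipulation is precisely the missing content. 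The paper avoids the issue entirely by controlling the \emph{image} of the semisimple part rather than its kernel: since $\ad(X)_{\rm ss}$ is a polynomial in $\ad(X)$ without constant term and $\frg=\ker\ad(X)_{\rm ss}\oplus\im\ad(X)_{\rm ss}$, one gets $\ad(X)_{\rm ss}\frg\subset\ad(X)_{\rm ss}[\frg,\frg]\subset[\frg,\frn]$, and by your own part (1) $[\frg,\frn]\perp\zen(\frn)\supset\frj$, so $\ad(X)_{\rm ss}\frg\subset\frj^\perp$. Combined with your (correct) observation that $\ad(X)_{\rm n}$ preserves $\frj^\perp$, this yields $\ad(X)\frj^\perp\subset\frj^\perp$. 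Replacing your sublemma by this image argument closes the proof.
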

\begin{proof}
Let $Z\in\zen(\frn)$, $X\in\frg$, $Y\in\frn$.
Then $\langle Z,[X,Y]\rangle = -\langle [Z,Y],X\rangle=0$,
which proves (1). Hence, (2) follows. 

For $X\in\frg$, we have $\ad(X)\frj\subset\frj$, as $\frj$
is an ideal.
Then $\ad(X)_{\rm n}\frj\subset\frj$ (see Section \ref{sec_jordan}), and also
\[
\ad(X)_{\rm n}\frj^\perp\subset\frj^\perp,
\]
as $\met$ is invariant under
$\ad(\frg)_{\rm n}$.
For the semisimple part, observe that
\[
\ad(X)_{\rm ss} \frg \subset \ad(X)_{\rm ss} [\frg, \frg] \subset [\frg, \frn]  \subset \frj^\perp.
\]
In particular, this means
$\ad(X)\frj^\perp\subset\frj^\perp$ and thus (3) holds.
\end{proof}

Let $\frj\subset\zen(\frn)$ be a totally isotropic ideal of $(\frg, \met)$. Since $\frj$ is totally isotropic, there exists 
a totally isotropic subspace
$\fra$ of $\frg$ such that 
\begin{equation}
\frg = \fra\oplus\frj^\perp.
\label{eq_decomposition}
\end{equation}
Note that $\met$ induces a dual pairing between 
$\fra$ and   $\frj/(\frj\cap\frr)$. 
%

\begin{lem}\label{lem_ada_abelian}
The restricted adjoint representation  $\ad_{\frg}(\fra)|_{\frj}$ of 
$\fra$ on $\frj$ is abelian.
\end{lem}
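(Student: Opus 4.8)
The goal is to show that the restricted adjoint representation $\ad_{\frg}(\fra)|_{\frj}$ is abelian, i.e.\ that $[[A,A'],Z]=0$ for all $A,A'\in\fra$ and $Z\in\frj$. Since $\frj\subset\zen(\frn)$ and $[\frg,\frg]\subset\frn$, we have $[A,A']\in\frn$, so $\ad([A,A'])$ acts trivially on $\frj$ by the very definition of $\zen(\frn)$. Thus the Lie algebra $\ad_{\frg}(\fra)|_{\frj}$ is the image of $\fra$ under a map whose value on brackets $[A,A']$ vanishes; the content to verify is precisely that the commutator $[\ad_{\frg}(A)|_{\frj},\ad_{\frg}(A')|_{\frj}]$ coincides with $\ad_{\frg}([A,A'])|_{\frj}$, which is a general Lie-algebra identity ($\ad$ is a homomorphism, and restriction to an invariant subspace preserves this). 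So morally the statement is immediate once one knows $\frj$ is an $\fra$-submodule.

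\emph{Step 1: $\frj$ is invariant under $\ad_{\frg}(\fra)$.} We know $\frj$ is an ideal of $\frg$ by hypothesis, so $\ad_{\frg}(X)\frj\subset\frj$ for every $X\in\frg$, in particular for $X\in\fra$. Hence $\ad_{\frg}(\fra)|_{\frj}$ is a well-defined Lie subalgebra of $\gl(\frj)$, being the image of the Lie algebra homomorphism $\fra\to\gl(\frj)$ obtained by restricting $\ad_{\frg}$ (note $\fra$ itself need not be a subalgebra of $\frg$, but $\ad_{\frg}$ restricted to the subspace $\fra$ composed with restriction to $\frj$ is linear, and its image is a subspace of $\gl(\frj)$ — what we must show is that this image is closed under bracket and that the bracket vanishes).

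\emph{Step 2: the bracket on the image vanishes.} For $A,A'\in\fra$, inside $\gl(\frj)$ we have
\[
[\ad_{\frg}(A)|_{\frj},\,\ad_{\frg}(A')|_{\frj}]
=\big(\ad_{\frg}(A)\ad_{\frg}(A')-\ad_{\frg}(A')\ad_{\frg}(A)\big)\big|_{\frj}
=\ad_{\frg}([A,A'])\big|_{\frj},
\]
using the Jacobi identity and the fact that $\frj$ is $\ad_{\frg}(\frg)$-invariant so that the composition of restrictions equals the restriction of the composition. Now $[A,A']\in[\frg,\frg]\subset\frn$, and since $\frj\subset\zen(\frn)$ we get $\ad_{\frg}([A,A'])Z=[[A,A'],Z]=0$ for all $Z\in\frj$. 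Hence $\ad_{\frg}([A,A'])|_{\frj}=0$, so the bracket of any two elements of $\ad_{\frg}(\fra)|_{\frj}$ is zero; in particular the image is trivially closed under bracket and the representation is abelian.

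\emph{Expected main obstacle.} There is essentially no obstacle here: the only subtlety is bookkeeping, namely making sure one is allowed to write $\ad_{\frg}(A)\ad_{\frg}(A')|_{\frj}=(\ad_{\frg}(A)\ad_{\frg}(A'))|_{\frj}$, which is exactly the statement that $\frj$ is $\ad_{\frg}(A')$-invariant (Step 1). The lemma is a short formal consequence of $\frj\subset\zen(\frn)$, $[\frg,\frg]\subset\frn$, and $\frj$ being an ideal; it does not use nil-invariance of $\met$ or the decomposition \eqref{eq_decomposition} at all, those being recorded only to fix the notation $\fra$ for later use.
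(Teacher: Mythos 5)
Your proof is correct and is essentially the paper's own argument: the paper likewise computes $[\ad_{\frg}(A)|_{\frj},\ad_{\frg}(B)|_{\frj}]=\ad_{\frg}([A,B])|_{\frj}=0$ using $[A,B]\in[\frg,\frg]\subset\frn$ and $\frj\subset\zen(\frn)$; you have merely spelled out the restriction bookkeeping. Your closing remark that neither nil-invariance of $\met$ nor the decomposition of $\frg$ is needed is also accurate.
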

\begin{proof}
For all $A,B\in\fra$,
\[
[\ad_{\frg}(A)|_{\frj},\ad_{\frg}(B)|_{\frj}]
=\ad_{\frg}([A,B])|_{\frj}=0,
\]
because $[A,B]\in\frn$ and $\frj\subset\zen(\frn)$.
\end{proof}

\begin{prop}\label{prop_jh}
Let $(\frg, \met)$ be a reduced solvable metric Lie algebra
with metric radical $\frr$. 
If\/ $\met$ is nil-invariant then 
the following hold:
\begin{enumerate}
\item $[ \frj^{\perp}, \frj] = \zsp$.
\item $\frj\cap\frr= \zsp$.
\item $\frg$ acts on $\frj$ by nilpotent transformations.
\end{enumerate}
\end{prop}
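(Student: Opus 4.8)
The plan is to analyze the $\fra$-module $\frj$ via the skew pairing furnished by the metric, and then feed the resulting nilpotency back into the Lie-algebra structure. First I would set up the skew pairing: using the decomposition $\frg = \fra\oplus\frj^\perp$ from \eqref{eq_decomposition} and the fact that $\met$ induces a dual pairing between $\fra$ and $\frj/(\frj\cap\frr)$, I define $\langle\cdot,\cdot\rangle:\frj\times\fra\to\RR$ by restricting $\met$. By Lemma \ref{lem_ada_abelian} the action $\rho = \ad_\frg(\fra)|_\frj$ makes $\frj$ an $\fra$-module (with $\fra$ acting as an abelian algebra). The key point is that this restricted pairing is a \emph{skew pairing} in the sense of Section \ref{sec_modules}: for $A,B\in\fra$ and $v\in\frj$, nil-invariance should give $\langle\ad(A)v,B\rangle = -\langle\ad(B)v,A\rangle$. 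This identity is where the hypotheses must be used carefully — one wants to write $\langle[A,v],B\rangle = -\langle v,[A,B]\rangle$ using invariance of $\met$ under the nilpotent part of $\ad(A)$, together with the observation that $[A,B]\in\frn$ acts trivially on $\frj\subset\zen(\frn)$ and that $\ad(A)$ restricted to $\frj$ is already nilpotent (since $A$ acts on $\frj$ through an abelian algebra of operators whose semisimple parts, by the argument of Lemma \ref{lem_j}(3), send $\frg$ into $\frj^\perp$, hence annihilate $\frj$ modulo the radical pairing). I expect this verification — showing $\ad_\frg(\fra)|_\frj$ acts by operators whose relevant part is the nilpotent Jordan component, so that nil-invariance literally gives skew-symmetry — to be the main obstacle.

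Once the skew pairing is in place, apply Corollary \ref{cor:skewpair}: either $(\frj,\rho)$ is a nilpotent module, or $\fra^\perp_\frj = \{v\in\frj\mid\langle v,\fra\rangle = 0\}$ contains a non-trivial submodule. But $\langle v,\fra\rangle = 0$ for $v\in\frj$ means exactly $v\in\frj\cap\frr$, since the pairing of $\fra$ with $\frj$ is dual modulo $\frj\cap\frr$ and $\frj$ is already $\perp$ to $\frj^\perp\supset\frj$. So $\fra^\perp_\frj = \frj\cap\frr$. A non-trivial $\fra$-submodule inside $\frj\cap\frr$ would, combined with the fact that $\frj\cap\frr$ is a $\frg$-invariant subspace (it is an ideal, being the intersection of the ideals $\frj$ and $\frr$; here I use Corollary \ref{cor_radicalzero} that $\frr$ is an ideal), produce a non-trivial ideal of $\frg$ contained in $\frr$ — contradicting that $(\frg,\met)$ is reduced. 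Hence $(\frj,\rho)$ is a nilpotent module, i.e.\ $\fra$ acts on $\frj$ by nilpotent transformations.

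It remains to upgrade "$\fra$ acts nilpotently" to "$\frg$ acts nilpotently" on $\frj$, which is statement (3), and then deduce (1) and (2). For (3): $\frg = \fra + \frj^\perp$, and $\frn\subset\frj^\perp$ acts on $\frj$ with image in... in fact $[\frn,\frj]\subset[\frn,\zen(\frn)] = 0$ if $\frj\subset\zen(\frn)$, so $\frn$ acts trivially on $\frj$; more generally $[\frg,\frg]\subset\frn$ acts trivially, so the $\frg$-action on $\frj$ factors through the abelian quotient $\frg/\frn$, and is determined by the action of a complement, which we can take inside $\fra$ — thus $\frg$ acts on $\frj$ by the same (nilpotent) operators as $\fra$, giving (3). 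For (2): if $v\in\frj\cap\frr$ is non-zero, then $\RR v$ together with its $\frg$-orbit generates an ideal; since $\frg$ acts nilpotently on $\frj$ by (3), a standard argument (Engel-type) produces a non-zero $\frg$-fixed vector in $\frj\cap\frr$, hence a non-trivial ideal in $\frr$, contradicting reducedness — so $\frj\cap\frr = \zsp$. Finally (1): by Lemma \ref{lem_invrad}(2), $[\frj^\perp,\frj]\subset\frj\cap\frr = \zsp$, using (2) and the fact that $\frj\subset\zen(\frn)\subset\inv(\frg,\met)$ is an ideal in $\frg$. (One must check $\zen(\frn)\subset\inv(\frg,\met)$, which follows from Lemma \ref{lem_j}(1): $[\frg,\frn]\perp\zen(\frn)$ gives $\langle[X,Z],Y\rangle = 0 = -\langle Z,[X,Y]\rangle$ for $Z\in\zen(\frn)$, $Y\in\frn$, $X\in\frg$, and the remaining cases where $Y$ is a general element of $\frg$ reduce to this since $[X,Y]\in\frn$.) This closes the argument.
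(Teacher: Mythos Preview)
Your proposal has the right ingredients --- the skew pairing on $\frj \times \fra$ and Corollary \ref{cor:skewpair} --- but the order of the argument creates a genuine circularity. You invoke Corollary \ref{cor_radicalzero} to conclude that $\frr$ is an ideal, but that corollary is a consequence of Proposition \ref{prop_radicalzero}, whose proof (see Section \ref{sec_j0}) relies on the very proposition you are proving. At this stage $\frr$ is \emph{not} known to be an ideal, so you cannot deduce that $\frj \cap \frr$ is one. Your argument for (3) has a related gap: you note that $\frn$ acts trivially on $\frj$ and that $\fra$ acts nilpotently, but $\fra$ is a complement to $\frj^\perp$, not to $\frn$, so elements of $\frj^\perp\setminus\frn$ are not covered by ``taking a complement inside $\fra$''.

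The fix is to prove (1) \emph{first}, independently of any knowledge about $\frr$: both $\frj$ and $\frj^\perp$ are ideals in $\frg$ (Lemma \ref{lem_j}(3)), hence so is $[\frj^\perp,\frj]$; by Lemma \ref{lem_invrad}(2) this ideal lies in $\frr$ (here one only needs $\frj \subset \frn \subset \inv(\frg,\met)$, which is immediate from nil-invariance since $\ad(X)=\ad(X)_{\rm n}$ for $X\in\frn$); reducedness then forces $[\frj^\perp,\frj]=\zsp$. With (1) in hand, $\frj^\perp$ acts trivially on $\frj$, so any $\fra$-invariant subspace of $\frj$ is automatically a $\frg$-ideal --- this is exactly what makes the hypothesis of Corollary \ref{cor:skewpair} verifiable and then yields (2) and (3). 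Incidentally, the skew-pairing identity $\langle[A,v],B\rangle=-\langle[B,v],A\rangle$ is simpler than you suggest: since $v\in\frj\subset\frn$, the operator $\ad(v)$ is already nilpotent, and nil-invariance gives $\langle[v,A],B\rangle=-\langle A,[v,B]\rangle$ directly.
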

\begin{proof}
Since both $ \frj^{\perp}$, $\frj$ are ideals in $\frg$, so
is  $[ \frj^{\perp}, \frj  ]$. 
By (2) of Lemma \ref{lem_invrad}, the ideal
$[ \frj^{\perp}, \frj ]$ is contained in  $\frr$.
Therefore, the reducedness of $\met$ implies (1). 

Consider the pairing $\met: \fra \times \frj \to \RR$. Since $\frj \subset \frn$, nil-invariance implies
\[
\langle [A,X], B \rangle = -  \langle [B,X], A \rangle,
\quad\text{for all $X\in\frj$ and $A,B\in\fra$}.
\]
It follows that $\met$ is a skew pairing with respect to the adjoint representation of $\fra$ on $\frj$ in the sense of Section \ref{sec_modules}. Note further that 
$\frr \cap \frj$ is the radical of this 
skew pairing.
Assume that $U \subset \frr \cap \frj$ satisfies $[ \fra, U ] \subset U$. By (1) above, $U$ is an ideal of $\frg$. Since $U \subset  \frr$, reducedness implies that $U = \zsp$. Thus the assumption of 
Corollary \ref{cor:skewpair} is satisfied.  Corollary 
\ref{cor:skewpair} therefore asserts that $\fra$ acts nilpotently on 
$\frj$. 
Nil-invariance of $\met$ further implies that $[ \fra, U ] \subset U$ for $U = \frj \cap \frr$. Thus $ \frj \cap \frr = \zsp$, and (2) holds.
Corollary \ref{cor:skewpair} together with (1) implies (3).
\end{proof}


\subsection{The characteristic ideal  $\boldsymbol{\zen(\frn)\cap[\frg,\frn]}$.} \label{sec_j0}
Recall that $\frn$ denotes the nilradical of $\frg$.
One key element in our analysis will be the following characteristic ideal of $\frg$:
\begin{equation}
\frj_0 = \zen(\frn) \cap [\frg,\frn].
\label{eq_j}
\end{equation}
A fundamental property is:
\begin{prop} \label{prop_j0}
$\frg$ is abelian if and only if\/ $\frj_0 = \zen(\frn) \cap [\frg,\frn] ={\bf 0}$.
\end{prop}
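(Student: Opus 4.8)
The statement asserts that for a solvable Lie algebra $\frg$ with nilradical $\frn$, the characteristic ideal $\frj_0 = \zen(\frn)\cap[\frg,\frn]$ vanishes precisely when $\frg$ is abelian. One direction is immediate: if $\frg$ is abelian, then $\frn = \frg$ and $[\frg,\frn] = [\frg,\frg] = \zsp$, so $\frj_0 = \zsp$. The content is the converse, and here I would argue by contraposition: assuming $\frg$ is not abelian, I must produce a non-zero element of $\zen(\frn)\cap[\frg,\frn]$.

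First I would reduce to the case where $\frn$ is non-abelian being impossible for the conclusion, by a descending-series argument inside $\frn$. Since $\frg$ is solvable and non-abelian, $[\frg,\frg]\neq\zsp$, and since $[\frg,\frg]\subseteq\frn$, the nilradical $\frn$ is a non-trivial nilpotent ideal. The key classical fact I would invoke is that for any non-trivial ideal $\fri$ of $\frg$ contained in $\frn$, one has $\zen(\frn)\cap\fri\neq\zsp$: indeed, $\frn$ acts nilpotently on $\fri$ by the adjoint action (as $\frn$ is nilpotent), so by Engel's theorem the subspace of $\frn$-fixed vectors in $\fri$, which is exactly $\zen(\frn)\cap\fri$ when $\fri\subseteq\frn$... more precisely $\{X\in\fri \mid [\frn,X]=\zsp\} \supseteq \zen(\frn)\cap\fri$ and is non-zero. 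Wait — I want $\zen(\frn)\cap\fri$ itself non-zero: take $X\in\fri$ nonzero killed by all of $\frn$; then $X\in\zen(\frn)$ since $[\frn,X]=\zsp$, and $X\in\fri$, so $X\in\zen(\frn)\cap\fri\neq\zsp$. Applying this with $\fri = [\frg,\frn]$ would finish the proof — provided $[\frg,\frn]\neq\zsp$.

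So the crux is to show $[\frg,\frn]\neq\zsp$ whenever $\frg$ is non-abelian. Suppose for contradiction $[\frg,\frn]=\zsp$, i.e. $\frn\subseteq\zen(\frg)$. Then $\frn$ is central, hence abelian, and in particular $\frn$ is itself abelian. But $[\frg,\frg]\subseteq\frn\subseteq\zen(\frg)$, so $\frg$ is two-step nilpotent; in a two-step nilpotent Lie algebra the nilradical is all of $\frg$. Then $[\frg,\frn]=[\frg,\frg]=\zsp$ would force $\frg$ abelian, a contradiction. Hence $[\frg,\frn]\neq\zsp$. Note $[\frg,\frn]$ is a characteristic ideal (it is the image of a bracket of characteristic subspaces), and it lies in $\frn$; applying the Engel-type fact above to $\fri=[\frg,\frn]$ yields a non-zero $X\in\zen(\frn)\cap[\frg,\frn]=\frj_0$.

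\textbf{Main obstacle.} The delicate point is the reduction showing $[\frg,\frn]\neq\zsp$: one must correctly handle the possibility that $\frn$ is abelian while $\frg$ is not, and verify that $\frn$ being central forces $\frg$ two-step nilpotent and hence $\frn=\frg$. Everything else is a standard application of Engel's theorem to the nilpotent action of $\frn$ on an ideal contained in it, together with the observation that $[\frg,\frn]$ and $\zen(\frn)$ are characteristic so their intersection is a characteristic ideal of $\frg$.
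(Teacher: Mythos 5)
Your proof is correct, and it uses the same two ingredients as the paper — the inclusion $[\frg,\frg]\subseteq\frn$ for a solvable Lie algebra, and the fact that a nonzero ideal contained in the nilpotent algebra $\frn$ must meet $\zen(\frn)$ — but you assemble them differently. The paper argues by cases: if $\frn$ is non-abelian it simply quotes $\zen(\frn)\cap[\frn,\frn]\neq\zsp$ (and $[\frn,\frn]\subseteq[\frg,\frn]$ finishes), and only when $\frn$ is abelian, so that $\frj_0=[\frg,\frn]$, does it run the argument that $[\frg,\frn]=\zsp$ would give $[\frg,[\frg,\frg]]=\zsp$, hence $\frg$ nilpotent, hence $\frg=\frn$ abelian, a contradiction. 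You instead prove $[\frg,\frn]\neq\zsp$ unconditionally by that same nilpotency argument, and then apply the Engel-type fact once, to the ideal $[\frg,\frn]\subseteq\frn$ under the adjoint action of $\frn$, to produce a nonzero element of $\zen(\frn)\cap[\frg,\frn]$ directly; your momentary hesitation about $\{X\in\fri\mid[\frn,X]=\zsp\}$ versus $\zen(\frn)\cap\fri$ resolves correctly since $\fri\subseteq\frn$. The net effect is a single unified argument in place of the paper's case split, at the cost of invoking Engel explicitly where the paper cites the standard special case for $[\frn,\frn]$; both routes are equally rigorous, and the trivial forward direction is handled identically.
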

\begin{proof}
Assume $\frg$ is not abelian. If $\frn$ is not abelian, then
$\frj_0\supset \zen(\frn)\cap[\frn,\frn]\neq{\bf 0}$.
If $\frn$ is abelian, then $\frj_0=[\frg,\frn]$.
Assuming $[\frg,\frn]={\bf 0}$, we find $[\frg,[\frg,\frg]]
={\bf 0}$. So $\frg$ is nilpotent, hence $\frg=\frn$ is abelian, contradicting our assumption. This shows $\frj_0\neq\zsp$. 
\end{proof}

We turn now to the properties of $\frj_0$ with respect to nil-invariant metrics: 

\begin{lem}\label{lem_nh0} Assume that the solvable metric Lie algebra $(\frg, \met)$ has nil-invariant metric $\met$, and
let $\frr$ denote the metric radical of $\met$.
Then:   
\begin{enumerate}
\item
$\frj_0=\zen(\frn)\cap[\frg,\frn]$
is a totally isotropic ideal.
\end{enumerate}
Moreover,  if $(\frg, \met)$ is reduced, then the following hold:
\begin{enumerate}
\setcounter{enumi}{1}
\item
$[\frn,\frr]={\bf 0}$.
\item
$\frr\subset\frz(\frn)$. In particular, $\frr$ is abelian.
\item 
$[\frg,\frr]\subset\frj_0$.
In particular, $\frj_0\oplus\frr$ is an ideal in $\frg$.
\item $[\frj_0^\perp,\frj_0\oplus\frr]={\bf 0}$.
\item $[\frg,\zen(\frn)] =\zsp$.
\end{enumerate}
\end{lem}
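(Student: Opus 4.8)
The plan is to prove the seven assertions more or less in the order listed, bootstrapping each from the previous ones and from the results of Sections \ref{sec_modules} and \ref{sec_h_0}. For (1): $\frj_0 = \zen(\frn)\cap[\frg,\frn]$ is exactly the ideal $\zen(\frn)\cap[\frg,\frn]$ treated in Lemma \ref{lem_j}, and since $[\frg,\frg]\subset\frn$ we may take $\frn$ itself in the role of the ideal there; nil-invariance of $\met$ in particular makes $\met$ $\frn$-invariant (the inner derivations $\ad(Y)$, $Y\in\frn$, are already nilpotent, so $\ad(Y)=\ad(Y)_{\rm n}$ is skew). Hence parts (1) and (2) of Lemma \ref{lem_j} apply directly and give that $\frj_0$ is a totally isotropic ideal. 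So (1) is essentially a citation.

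For the remaining parts I would assume reducedness and first set up the Witt-type splitting $\frg = \fra\oplus\frj_0^\perp$ with $\fra$ totally isotropic dually paired with $\frj_0/(\frj_0\cap\frr)$ as in \eqref{eq_decomposition}, and invoke Proposition \ref{prop_jh} with $\frj = \frj_0$: this yields $[\frj_0^\perp,\frj_0]=\zsp$, $\frj_0\cap\frr=\zsp$, and $\frg$ acts nilpotently on $\frj_0$. Now for (2), $[\frn,\frr]=\zsp$: take $Y\in\frn$ and $R\in\frr$. Since $\ad(Y)$ is nilpotent and $\met$ is nil-invariant, $\ad(Y)$ is skew, so $\langle[Y,R],X\rangle = -\langle R,[Y,X]\rangle = 0$ for all $X$ because $R\in\frg^\perp$; hence $[Y,R]\in\frr\cap\frn$. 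Iterating, one shows $\frr\cap\frn$ is an $\ad(\frn)$-invariant subspace of $\frr$ on which $\frn$ acts nilpotently, and — using that $\frr$ contains no nonzero ideal of $\frg$ together with Corollary \ref{cor:skewpair} (applied to the skew pairing given by $\met$ restricted appropriately) — the nilpotent action must be trivial, giving $[\frn,\frr\cap\frn]=\zsp$, from which $[\frn,\frr]=\zsp$ follows once one knows $\frr\subseteq\frn$; but in fact it is cleaner to first observe $[\frr,\frr]\subseteq\frr$ is a nilpotent ideal, hence $\frr\subseteq\frn$ would follow if $\frr$ were an ideal — which it is, by Corollary \ref{cor_radicalzero}. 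With $\frr$ an ideal contained in the solvable radical and nilpotently acted on by $\ad(\frn)$, standard arguments place $\frr\subseteq\frn$, and then $\frr\subseteq\zen(\frn)$ follows from $[\frn,\frr]=\zsp$, proving (2) and (3); in particular $\frr$ is abelian.

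For (4), $[\frg,\frr]\subseteq\frj_0$: given $X\in\frg$ and $R\in\frr$, write $\ad(X) = \ad(X)_{\rm ss}+\ad(X)_{\rm n}$. The nilpotent part is skew, so as above $\ad(X)_{\rm n}R\in\frr\subseteq\zen(\frn)$; for the semisimple part use $\ad(X)_{\rm ss}\frg\subseteq[\frg,\frn]$ (as in the proof of Lemma \ref{lem_j}(3)) so $\ad(X)_{\rm ss}R\in[\frg,\frn]$, while also $\ad(X)R\in\frr\subseteq\zen(\frn)$ by (3), hence $\ad(X)_{\rm ss}R\in\zen(\frn)\cap[\frg,\frn]=\frj_0$; combining, $[X,R]\in\frj_0+\frr$, and refining the argument (the $\ad(X)_{\rm n}R$ term actually lies in $[\frn,\frr]=\zsp$ by (2)) gives $[X,R]\in\frj_0$. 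This shows $\frj_0\oplus\frr$ — the sum is direct by Proposition \ref{prop_jh}(2) — is an ideal. Part (5), $[\frj_0^\perp,\frj_0\oplus\frr]=\zsp$: the $\frj_0$-factor is handled by Proposition \ref{prop_jh}(1), and for the $\frr$-factor note $\frr=\frg^\perp\subseteq\frj_0^\perp$ and by (3) $\frr\subseteq\zen(\frn)$, so with $[\frg,\frg]\subseteq\frn$ one gets $[\frj_0^\perp,\frr]\subseteq[\frn,\frr]=\zsp$ after checking $\frj_0^\perp\supseteq[\frg,\frg]$ modulo routine bookkeeping. Finally (6), $[\frg,\zen(\frn)]=\zsp$: $\zen(\frn)$ is an ideal of $\frg$, $\met$ is $\frn$-invariant, and Lemma \ref{lem_j}(1) gives $[\frg,\frn]\perp\zen(\frn)$; running the skew-pairing machinery of Section \ref{sec_modules} on $\fra$ acting on $\zen(\frn)$ exactly as in Proposition \ref{prop_jh} (with $\frj$ replaced by the ideal $\zen(\frn)$) forces $\fra$, and then all of $\frg$, to act nilpotently on $\zen(\frn)$, whence $\zen(\frn)\cap[\frg,\frn]=\frj_0$ absorbs the entire image and reducedness collapses it, yielding $[\frg,\zen(\frn)]=\zsp$.

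The main obstacle I anticipate is part (2)/(3) — pinning down that $\frr$ actually lies inside $\frn$ (equivalently $\zen(\frn)$) rather than merely being an abelian ideal acted on nilpotently by $\frn$. The skew-pairing/reducedness dichotomy of Corollary \ref{cor:skewpair} is the right tool, but one must be careful that the relevant restricted pairing really is a \emph{skew} pairing in the precise sense of Section \ref{sec_modules}, i.e. that the operators witnessing non-nilpotency would produce a genuine nonzero submodule of the radical, contradicting reducedness; getting the bookkeeping of which ideal plays the role of $\frj$ right, and ensuring at each stage that the subspaces produced are honest $\frg$-ideals (so that reducedness bites), is where the real care is needed. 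Once (2) and (3) are secured, parts (4)–(6) are comparatively formal consequences of the Jordan decomposition identity $\ad(X)_{\rm ss}\frg\subseteq[\frg,\frn]$ together with the already-established perpendicularities.
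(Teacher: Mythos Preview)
Your proposal contains a genuine circularity in parts (2)/(3): you invoke Corollary~\ref{cor_radicalzero} to conclude that $\frr$ is an ideal of $\frg$, but in the paper Corollary~\ref{cor_radicalzero} is deduced from Proposition~\ref{prop_radicalzero}, whose proof uses Lemma~\ref{lem_nh0} (specifically parts (4) and (5)). So you cannot appeal to it here. Your alternative route via Corollary~\ref{cor:skewpair} is also problematic as stated, since that corollary concerns \emph{abelian} modules with a skew pairing, whereas $\frn$ acting on $\frr\cap\frn$ is not an abelian representation in general.

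The paper's argument for (2) and (3) avoids all of this and is short. For (2): from $\frn\subset\inv(\frg,\met)$ one has $[\frn,\frr]\subset\frr$, and $\frn$ acts nilpotently on the invariant subspace $[\frn,\frr]$; if $[\frn,\frr]\neq\zsp$ there is a nonzero $Z\in[\frn,\frr]$ annihilated by $\frn$, so $Z\in\zen(\frn)\cap[\frg,\frn]\cap\frr=\frj_0\cap\frr$, which is $\zsp$ by Proposition~\ref{prop_jh}(2). For (3): given $Y\in\frr$, $[Y,\frg]\subset[\frg,\frg]\subset\frn$, so $[Y,[Y,\frg]]\subset[\frr,\frn]=\zsp$ by (2); thus $\ad(Y)^2=0$, hence $Y\in\frn$, and then (2) gives $Y\in\zen(\frn)$. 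No knowledge that $\frr$ is an ideal is needed.

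Once (3) is in hand, (4) is immediate (not via Jordan decomposition): $\frr\subset\zen(\frn)$ gives $[\frg,\frr]\subset[\frg,\zen(\frn)]\subset\zen(\frn)\cap[\frg,\frn]=\frj_0$. Your claim that $\ad(X)_{\rm n}R\in[\frn,\frr]$ for general $X\in\frg$ is unjustified ($\ad(X)_{\rm n}$ is a polynomial in $\ad(X)$, not an inner derivation by an element of $\frn$). For (5), your sketch ``$[\frj_0^\perp,\frr]\subset[\frn,\frr]$'' does not follow from $[\frg,\frg]\subset\frj_0^\perp$; the paper instead uses that $Z\in\frj_0\oplus\frr\subset\zen(\frn)\subset\inv(\frg,\met)$, so for $Y\in\frj_0^\perp$ one has $\langle[Y,Z],X\rangle=-\langle Y,[X,Z]\rangle=0$ (since $[X,Z]\in\frj_0$ by (4)), forcing $[Y,Z]\in\frj_0\cap\frr=\zsp$.
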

\begin{proof}

Nil-invariance implies that (1) holds.
Moreover,  $[\frn, \frr] \subset \frr$, and
hence $\frn$ acts on $\frr$ and $[\frn,\frr]$. 
Since the action of $\frn$ is nilpotent, assuming $[\frn,\frr]\neq{\bf 0}$, there exists a non-zero $Z\in[\frn,\frr]$ such that $\ad(X)Z=0$
for all $X\in\frn$.
Hence $Z\in\frj_0\cap\frr$. But
$\frj_0\cap\frr={\bf 0}$ by Proposition \ref{prop_jh}, a contradiction.
It follows that $[\frn,\frr]={\bf 0}$. Hence (2) holds.

For all $Y\in\frr$ it follows from (2) that
$[Y,\frg]\subset\frn$ implies $[Y,[Y,\frg]]={\bf 0}$.
Hence
$\frr\subset\{Y\in\frg\mid\ad(Y)\text{ is nilpotent}\}=\frn$.
Again by (2), $\frr\subset\zen(\frn)$. Hence (3) holds.
Now (4) is immediate from (3). 

Let $Z\in\frj_0\oplus\frr\subset\zen(\frn)$.
For all $X\in\frg$, $[X,Z]\in\zen(\frn)\cap[\frg,\frn]=\frj_0$.
Now let $Y\in\frj_0^\perp$. Then
\[
\langle [Y,Z],X \rangle=-\langle Y,[X,Z] \rangle = 0,
\]
which means $[Y,Z]\in\frr$.
But then $[Y,Z]\in\frj_0\cap\frr={\bf 0}$. Hence, (5) holds. 

Finally, since $[\frg, \zen(\frn)] \subset \frj_{0}$, (3) of Proposition \ref{prop_jh} implies that $\frg$ acts nilpotently on $\zen(\frn)$.
It then follows that for all $X,Y\in\frg$, $Z\in\zen(\frn)$,
\[
\langle [X,Z],Y\rangle
=\langle\ad(X)_{\rm n} Z,Y\rangle
=-\langle Z,\ad(X)_{\rm n}Y\rangle
=0 .
\]
The latter term is $0$ since $\ad(X)_{\rm n}Y\in[\frg,\frn]$ and
$[\frg,\frn]\perp\zen(\frn)$ by Lemma \ref{lem_j}. Hence,
$[\frg,\zen(\frn)] \subset \frr$ and since $(\frg,\met)$ is
reduced, (6) holds.
\qedhere
\end{proof}

%
\begin{proof}[Proof of Proposition \ref{prop_radicalzero}] 
 We decompose $\frg = \fra \oplus \frj_0^{\perp}$ as in
\eqref{eq_decomposition}. 
By Proposition \ref{prop_jh}, $\ad(\fra)$ acts on $\frj_0$ by
nilpotent operators. By (4) of Lemma \ref{lem_nh0}, $[\fra,\frr]\subset\frj_0$.
So $\ad(\fra)$ acts on $\frj_0\oplus\frr$ by nilpotent operators.

For all $A,B\in\fra$ and $H\in\frr$,
we thus find
\[
\langle \ad(A)H,B\rangle
= \langle \ad(A)_{\rm n}H,B\rangle
= -\langle H,\ad(A)_{\rm n}B\rangle
=0.
\]
Hence $\ad(\fra)\frr\subset\fra^\perp\cap\frj_0=\frr\cap\frj_0={\bf 0}$.
By (5) of Lemma \ref{lem_nh0}, $[\frj_0^\perp,\frr]={\bf 0}$.
Therefore, $[\frg,\frr]={\bf 0}$.
So $\frr$ is an ideal in $\frg$ and
thus $\frr={\bf 0}$ by reducedness.
\end{proof}


\begin{proof}[Proof of Corollary \ref{cor_znzg}] 
Assertion (1) is implied by (6) of Lemma \ref{lem_nh0}.
If $\frg$ is not abelian, then $\frj_0$ is non-trivial by Proposition \ref{prop_j0}. It is contained in $\zen(\frg)$ by (1).  Hence,  (2) follows.
\end{proof}

\section{Reduction by a totally isotropic central ideal}
\label{sec_reduction}

Let $(\frg, \met)$ be a metric Lie algebra, where $\frg$ is solvable and $\met$ is a nil-invariant \emph{non-degenerate} symmetric bilinear form. We show that $\met$ is invariant. 

\subsection{Reduction}
Let $\frj \subset\zen(\frg)$ be a totally 
isotropic ideal in  $\frg$ which is central. Then $\frj^\perp$ is an ideal in $\frg$. In particular, we can consider the quotient Lie algebra
 $$ \bar \frg = \frj^\perp / \, \frj  \; .  $$
Since $\frj$ is totally isotropic, $\bar \frg$ inherits a non-degenerate symmetric bilinear form from $\frj^{\perp}$. The metric Lie algebra $(\bar \frg, \langle\cdot,\cdot\rangle)$ 
will be called the \emph{reduction}  of $(\frg, \langle\cdot,\cdot\rangle)$ by $\frj$.  

We may choose a totally isotropic vector subspace $\fra$ of $\frg$  to obtain a Witt-decomposition 
\begin{equation} \label{eq_decompositionW}
\frg = \fra \oplus\frw\oplus\frj  \; , 
\end{equation}
where 
$\frw$ is a non-degenerate subspace orthogonal to $\fra$ and $\frj$. 

For all $X \in \frg$, we write $X = X_{\fra} + X_{\frw} + X_{\frj}$ with respect to \eqref{eq_decompositionW}.  In what follows we shall frequently indentify ${\frw}$ with the underlying vector space of $\bar \frg$. Thus for $X \in \frj^\perp$, the projection $\bar X$ of $X$ to $\frg$ may also be considered as the element $X_{\frw} \in\frw$. Similarly, 
$[ \bar X, \bar Y ]_{\bar \frg} = [X,Y]_{\frw}$ for $X,Y \in  \frj^\perp$ is the Lie bracket in $\bar \frg$. 
The Lie product in $\frg$
thus gives rise to the following equations:

For all $X,Y \in \frj^\perp$, 
\begin{equation}
\label{eq_defomega}
[X,Y] =  [\bar X,\bar Y]_{\bar \frg} + \omega(\bar{X}, \bar{Y}),
\end{equation} 
where $\omega\in\Z^2(\bar \frg, \frj)$ is a $2$-cocycle.

For all $A \in \fra$, $X \in \frj^{\perp}$,
\begin{equation}
[A , X]  = \bar{A}\, \bar{X} + \xi_{A}(\bar{X}),
\label{eq_A_action}
\end{equation}
where $\xi_{A}: \bar \frg \to \frj$ is a linear map, 
and $\bar{A}$ is the derivation of $\bar{\frg}$ 
induced by $\ad(A)$.

\begin{remark}\label{rem_der}
Recall that any derivation of $\frg$ maps $\frg$ to the nilradical $\frn$ (Jacobson \cite[Theorem III.7]{jacobson}).
If $S$ is a semi\-simple derivation, this implies
\begin{equation*}
S\frg = S\frn \subseteq\frn.
\label{eq_der_ss}
\end{equation*}
In particular, this holds for derivations of the form $S=\ad(X)_{\rm ss}$, $X\in\frg$.
\end{remark}

In a split situation, the maps $\xi_{A}$ vanish: 
\begin{lem}\label{lem_An1}
Assume that $[\fra, \fra] = \zsp$ (that is, $\fra$ is an abelian 
subalgebra). Then $[\fra, \frg]$ is contained in $\fra^{\perp}$. 
In particular, $\xi_{A} = 0$ for all $A \in \fra$. 
\end{lem}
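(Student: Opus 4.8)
The plan is to prove both claims by a direct computation using the skew pairing structure, combined with the fact that $\fra$ acts nilpotently on the central ideal $\frj$ (Proposition \ref{prop_jh}(3)) and that $[\fra,\fra]=\zsp$ feeds into the skew-pairing machinery of Section \ref{sec_modules}.

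First I would observe that it suffices to show $[\fra,\frg]\subseteq\fra^\perp$, since from the Witt decomposition \eqref{eq_decompositionW} we have $\fra^\perp = \frw\oplus\frj = \frj^\perp$ (using that $\fra$ is dually paired with $\frj$ and $\frw$ is non-degenerate and orthogonal to both), so $[\fra,\frg]\subseteq\frj^\perp$ immediately forces the $\frj$-component of $[A,X]$ in \eqref{eq_A_action} to vanish, i.e.\ $\xi_A(\bar X) = [A,X]_\frj = 0$ for all $A\in\fra$, $X\in\frj^\perp$; and since $\frg = \fra\oplus\frj^\perp$, the maps $\xi_A$ defined on $\bar\frg$ are then identically zero. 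So the real content is the inclusion $[\fra,\frg]\subseteq\fra^\perp$.

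To get $[\fra,\frg]\subseteq\fra^\perp$, I need to show $\langle [A,X],B\rangle = 0$ for all $A,B\in\fra$, $X\in\frg$. Decompose $X = X_\fra + X_{\frj^\perp}$; the term $[A,X_\fra]$ lies in $[\fra,\fra]=\zsp$, so only $[A, X_{\frj^\perp}]$ contributes, and writing $X' = X_{\frj^\perp}$ we may further decompose $X' = \bar X + X'_\frj$ with $X'_\frj\in\frj\subseteq\zen(\frg)$, so $[A,X'_\frj]=0$ and we are reduced to $X'=\bar X\in\frw$. Now the key point: the restriction $\met:\fra\times\frj\to\RR$ is a skew pairing for the $\fra$-module $\frj$ (this is exactly the computation in the proof of Proposition \ref{prop_jh}, using nil-invariance and $\frj\subseteq\frn$), and moreover — since $[\fra,\fra]=\zsp$ and $\frj\subseteq\zen(\frn)$ — the action $\ad_\frg(\fra)|_\frj$ is abelian (Lemma \ref{lem_ada_abelian} holds without needing $\fra$ abelian in fact, but here we have it outright). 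The skew-pairing relation gives $\langle [A,X]_\frj, B\rangle = -\langle [B,X]_\frj, A\rangle$ for $A,B\in\fra$. The subtlety is that $[A,X]$ for general $X\in\frg$ need not lie in $\frj$, so I should apply the skew-pairing identity in the form coming directly from nil-invariance: for $A,B\in\fra$ and $X\in\frn$, writing $\ad(A)$ and $\ad(B)$ as their nilpotent parts (since $\ad(A)|_\frn$ is nilpotent as $\fra$ acts nilpotently on $\frj$... ) — more carefully, I use nil-invariance: $\langle[A,X],B\rangle = \langle\ad(A)_{\rm n}X, B\rangle = -\langle X, \ad(A)_{\rm n}B\rangle$, and $\ad(A)_{\rm n}B \in \ad(A)_{\rm n}\fra \subseteq [\frg,\frg]\cap(\text{something})$; in fact $\ad(A)_{\rm n}B$ lies in the image of $\ad(A)$ hence in $\frn$, and one wants it orthogonal to the relevant piece. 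Cleaner: by nil-invariance $\langle[A,X],B\rangle = -\langle X,[A,B]_{\rm n}\rangle$-type manipulation, and $[A,B]=0$, but the nilpotent part of $\ad(A)$ applied to $B$ is not $[A,B]$. So instead I invoke the skew pairing directly on the subspace where it is defined.

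The main obstacle — and the step deserving the most care — is handling the case $X\in\frw$ (equivalently $X\in\bar\frg$) rather than $X\in\frj$, because the skew pairing of Section \ref{sec_modules} is a priori only defined for the module $\frj$. The resolution I expect to use is: for $A\in\fra$, $X\in\frj^\perp$, nil-invariance gives $\langle[A,X],B\rangle = \langle \ad(A)_{\rm n}X,B\rangle = -\langle X,\ad(A)_{\rm n}B\rangle$; now $\ad(A)_{\rm n}B\in\im\ad(A)\subseteq[\frg,\frg]\subseteq\frn$, and since $\fra$ acts on $\frj$ nilpotently with $\frj\subseteq\zen(\frn)$, one checks $\ad(A)_{\rm n}B$ in fact lands in a subspace paired trivially with $\frj^\perp$; alternatively, apply nil-invariance once more to move the operator back, obtaining $\langle X,\ad(A)_{\rm n}B\rangle = \langle X, [A,B]\rangle + (\text{higher order terms that vanish})$. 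Given the hypothesis $[\fra,\fra]=\zsp$, iterating the nil-invariance identity and using that $\ad(A)_{\rm n}^k B$ for $k\geq 1$ stays in $[\frg,\frn]$ which is orthogonal to $\frj^\perp\supseteq\frw$ when paired suitably, I expect the whole expression to collapse to $0$. Once $[\fra,\frg]\subseteq\fra^\perp$ is established, the vanishing $\xi_A=0$ follows formally as described, completing the proof.
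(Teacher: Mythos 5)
There is a genuine gap, and it sits exactly where the real content of the lemma lies. In your key computation you write $\langle [A,X],B\rangle=\langle \ad(A)_{\rm n}X,B\rangle$, thereby silently discarding the semisimple contribution $\langle \ad(A)_{\rm ss}X,B\rangle$. Nothing you have said justifies this, and it is precisely the point the paper's proof spends its first half on: one shows $\ad(A)_{\rm ss}\frg\subseteq\fra^{\perp}$ by observing that $\ad(A)_{\rm ss}\frg=\ad(A)_{\rm ss}\frn\subseteq\ad(A)^{2}\frn$ (Remark \ref{rem_der} plus semisimplicity of $\ad(A)_{\rm ss}$), and that the pairing $\met:\fra\times\frn\to\RR$ is a skew pairing for the abelian module $(\frn,\ad|_{\frn})$ --- skew because $\ad(N)$ is nilpotent, hence skew-symmetric by nil-invariance, for every $N\in\frn$ --- so the argument of Proposition \ref{prop:skewpair} gives $\ad(A)^{2}\frn\subseteq\fra^{\perp}$. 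Your own worry that the skew-pairing machinery ``is a priori only defined for $\frj$'' is resolved by running it on $\frn$ rather than $\frj$, but you never take that step; instead you gesture at ``iterating the nil-invariance identity'' and at $\ad(A)_{\rm n}^{k}B$ lying in $[\frg,\frn]$ being ``orthogonal to $\frj^{\perp}$ when paired suitably'', which is not established anywhere (Lemma \ref{lem_j} only gives $[\frg,\frn]\perp\zen(\frn)$). Moreover you miss the elementary fact that closes the argument: since $[A,B]=0$ and the Jordan components of $\ad(A)$ are polynomials in $\ad(A)$ without constant term, $\ad(A)_{\rm n}B=0$, so $\langle \ad(A)_{\rm n}X,B\rangle=-\langle X,\ad(A)_{\rm n}B\rangle=0$ outright; no iteration or ``higher order terms'' are needed.

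Your opening reduction is also incorrect as stated. With $\fra$ and $\frj$ totally isotropic and dually paired and $\frw$ non-degenerate and orthogonal to both, one has $\fra^{\perp}=\fra\oplus\frw$ and $\frj^{\perp}=\frw\oplus\frj$, not $\fra^{\perp}=\frw\oplus\frj$. In particular $[\fra,\frg]\subseteq\frj^{\perp}$ does \emph{not} force $\xi_{A}=0$, since $\frj\subseteq\frj^{\perp}$. The correct deduction is: the $\frj$-component of $[A,X]$ in \eqref{eq_A_action} is detected by pairing against $\fra$, so $[\fra,\frg]\subseteq\fra^{\perp}=\fra\oplus\frw$, combined with $[A,X]\in\frj^{\perp}=\frw\oplus\frj$ for $X\in\frj^{\perp}$ (as $\frj^{\perp}$ is an ideal), gives $[A,X]\in\frw$ and hence $\xi_{A}(\bar{X})=0$. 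So the conclusion you want does follow from the inclusion $[\fra,\frg]\subseteq\fra^{\perp}$, but both the identification of $\fra^{\perp}$ and the main inclusion itself need to be repaired along the lines above.
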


\begin{proof}
Let $A \in \fra$. Note that $ \ad(A)_{\rm ss}\frg =  \ad(A)_{\rm ss} \frn$ is contained in $\ad(A)^{2}\frn$, where $\frn$ is the nilradical.  
The $\frn$-invariance implies that the pairing $\met: \fra \times \frn \to \RR$ is skew with respect to the representation  $A \mapsto \ad(A) {|_\frn}$. Thus the proof of Proposition \ref{prop:skewpair} shows that $\ad(A)^{2}\frn \subset \fra ^{\perp}$, and hence
$ \ad(A)_{\rm ss}\frg \subset \fra ^{\perp}$. 
Now let $X \in \frg$, $B \in \fra$. Then using $\ad(A)_{\rm n }$ is skew and  $\ad(A)_{\rm n } B = 0$, we obtain $\langle [ A, X], B \rangle \, = \langle  \ad(A)_{\rm n }X , B \rangle    \,  =  -  \,  \langle  \ad(A)_{\rm n } B ,  X \rangle  \, =  0 $. 
\end{proof}


If the reduction $(\bar \frg , \met)$ has invariant metric, the derivation $\bar A$ and the extension cocycle $\omega$ determine each other: 
\begin{prop}  \label{prop_A_and_O}
Let $\frj \subset \zen(\frg)$ be a totally isotropic ideal. Assume that the reduction $(\bar \frg , \met)$ with respect to $\frj$ has an invariant metric.  Then, for all $X,Y\in \frj^{\perp}$, $A\in\fra$, we have 
\begin{equation}
\label{eq_A_and_O}
  \langle\bar{A}\,  \bar X,  Y\rangle
= \langle \omega(\bar X, \bar Y), A \rangle   \; .
\end{equation}
\end{prop}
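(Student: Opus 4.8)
The plan is to exploit invariance of the reduced metric $\met$ on $\bar\frg$ together with the defining equations \eqref{eq_defomega} and \eqref{eq_A_action}, and to play off the two different ways of writing the bracket $[A,[X,Y]]$ for $A\in\fra$ and $X,Y\in\frj^\perp$. First I would note that $\frj$ is central and totally isotropic, so $\met$ pairs $\fra$ nondegenerately with $\frj$ (modulo the radical, which is trivial here since $\met$ is nondegenerate on $\frg$); in particular the right-hand side $\langle\omega(\bar X,\bar Y),A\rangle$ is exactly the $\fra$-component needed to detect $\omega(\bar X,\bar Y)\in\frj$ through the pairing with $A$. The left-hand side $\langle\bar A\,\bar X,Y\rangle$ only depends on $\bar X,\bar Y$ and on $\bar A$ because $\bar A\,\bar X\in\bar\frg$ is identified with its $\frw$-representative and $\frj\perp\frw$, $\frj\perp\fra$; so both sides are well defined on $\bar\frg\times\bar\frg$.

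The key computation is to apply the nil-invariance (indeed the full $\frn$-invariance, since $[X,Y]\in\frn$) of $\met$ to the triple $A\in\fra$, $X,Y\in\frj^\perp$. Using \eqref{eq_A_action} we have $[A,X]=\bar A\,\bar X+\xi_A(\bar X)$, and since $\frj$ is totally isotropic and orthogonal to $\frw$ we get $\langle[A,X],Y\rangle=\langle\bar A\,\bar X,\bar Y\rangle_{\bar\frg}$, i.e. the $\xi_A$-term drops out when pairing with $Y\in\frj^\perp$. On the other hand, skew-symmetry of $\ad(A)$ restricted appropriately (this is where $\frn$-invariance enters, applied to the pair $X,Y$ and the element $A$, noting $X,Y$ can be taken in $\frn$ after splitting off the semisimple parts as in Remark \ref{rem_der} and the proof of Lemma \ref{lem_An1}) gives $\langle[A,X],Y\rangle=-\langle X,[A,Y]\rangle$. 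Expanding the right side the same way yields $\langle\bar A\,\bar X,\bar Y\rangle_{\bar\frg}=-\langle\bar X,\bar A\,\bar Y\rangle_{\bar\frg}$, which is just skew-symmetry of the derivation $\bar A$ with respect to the invariant metric on $\bar\frg$ — consistent but not yet \eqref{eq_A_and_O}. To get the cocycle in, I instead expand $\langle[A,[X,Y]],\text{(something)}\rangle$ or, more directly, use invariance of $\met$ on $\frg$ restricted to what we know: apply $\met$-skewness of $\ad(A)_{\rm n}$ (nil-invariance) to the identity $\ad(A)[X,Y]=[\ad(A)X,Y]+[X,\ad(A)Y]$ paired suitably, then substitute \eqref{eq_defomega} to replace $[X,Y]$ by $[\bar X,\bar Y]_{\bar\frg}+\omega(\bar X,\bar Y)$ and \eqref{eq_A_action} to replace $\ad(A)X$. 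Collecting the $\frj$-valued terms, and using that $\met$ is already invariant on the reduction $\bar\frg$ so that the $\bar\frg$-bracket contributions cancel, leaves precisely $\langle\bar A\,\bar X,\bar Y\rangle=\langle\omega(\bar X,\bar Y),A\rangle$ after identifying $\langle\ad(A)\,\cdot\,,\cdot\rangle$ restricted to $\frj\times\fra$ with the pairing.

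The cleanest route, which I would actually write up, is: pick $X,Y\in\frj^\perp$ and $A\in\fra$, and compute $\langle[A,X],Y\rangle$ in two ways. Way one: by \eqref{eq_A_action} and $\frj\perp\frw$, $\frj\perp\fra$, this equals $\langle\bar A\,\bar X,\bar Y\rangle$. Way two: decompose using the invariance/nil-invariance relation to get $-\langle X,[A,Y]\rangle=-\langle\bar X,\bar A\,\bar Y\rangle$; combined with way one this only recovers skewness. So instead compute $\langle[X,Y],A\rangle$ in two ways. By \eqref{eq_defomega}, $[X,Y]=[\bar X,\bar Y]_{\bar\frg}+\omega(\bar X,\bar Y)$; since $[\bar X,\bar Y]_{\bar\frg}\in\frw$ and $\frw\perp\fra$ (here I use the Witt decomposition $\frg=\fra\oplus\frw\oplus\frj$ with $\fra,\frj$ totally isotropic and $\frw$ orthogonal to both), only the $\omega$-term survives: $\langle[X,Y],A\rangle=\langle\omega(\bar X,\bar Y),A\rangle$. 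On the other hand, since $[X,Y]\in[\frg,\frg]\subseteq\frn$ and $\met$ is $\frn$-invariant, we may move $A$ across via $\ad(\cdot)$: more precisely, using that $\met$ is invariant under $\ad(A)_{\rm n}$ and that $\ad(A)_{\rm ss}$ contributes nothing when paired appropriately (Remark \ref{rem_der} and the argument of Lemma \ref{lem_An1}: $\ad(A)_{\rm ss}\frg\subseteq\fra^\perp$), we obtain $\langle[X,Y],A\rangle=\langle\ad(X)Y,A\rangle=-\langle Y,[X,A]\rangle=\langle Y,[A,X]\rangle=\langle\bar A\,\bar X,\bar Y\rangle$, the last step again by \eqref{eq_A_action}. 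Equating the two expressions gives \eqref{eq_A_and_O}.

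\textbf{Main obstacle.} The delicate point is the passage ``$\langle[X,Y],A\rangle=-\langle Y,[X,A]\rangle$'': this is invariance of $\met$ under $\ad(X)$, which is \emph{not} hypothesized — we only have nil-invariance, plus invariance on the reduction $\bar\frg$. So I expect the real work to be justifying this identity by splitting $\ad(X)=\ad(X)_{\rm ss}+\ad(X)_{\rm n}$: the nilpotent part is skew by hypothesis, and for the semisimple part one must show $\langle\ad(X)_{\rm ss}Y,A\rangle=-\langle Y,\ad(X)_{\rm ss}A\rangle$, using $\ad(X)_{\rm ss}\frg\subseteq\frn$ (Remark \ref{rem_der}) together with $[\frg,\frn]\perp\zen(\frn)$ and the fact that after reduction the relevant semisimple contributions land in the already-controlled part $\frw$. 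Getting the bookkeeping of which terms lie in $\frj$, which in $\frw$, and which in $\fra^\perp$ exactly right — and invoking invariance of the reduced metric at precisely the step where the $\bar\frg$-bracket terms must cancel — is the crux; everything else is routine substitution into \eqref{eq_defomega} and \eqref{eq_A_action}.
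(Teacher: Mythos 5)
Your bookkeeping reduction is correct and is the same one implicit in the paper: since $\xi_A(\bar X)\in\frj$ pairs to zero with $Y\in\frj^{\perp}$ and $[\bar X,\bar Y]_{\bar\frg}\in\frw\perp\fra$, the identity \eqref{eq_A_and_O} is equivalent to $\langle[A,X],Y\rangle=\langle A,[X,Y]\rangle$ for $X,Y\in\frj^{\perp}$, $A\in\fra$. But this identity is the entire content of the proposition, and exactly here your argument stops being a proof. The step $\langle[X,Y],A\rangle=-\langle Y,[X,A]\rangle$ requires skewness of the full $\ad(X)$ on the pairing between $\frj^{\perp}$ and $\fra$, which is not hypothesized, and your proposed repair --- deduce $\langle\ad(X)_{\rm ss}Y,A\rangle=-\langle Y,\ad(X)_{\rm ss}A\rangle$ from $\ad(X)_{\rm ss}\frg\subseteq\frn$ (Remark \ref{rem_der}) and $[\frg,\frn]\perp\zen(\frn)$ --- does not close: those facts only give orthogonality of $\ad(X)_{\rm ss}\frg\subset[\frg,\frn]$ to $\zen(\frn)\supseteq\frj$, and say nothing about its pairing with the isotropic complement $\fra$ or with general elements of $\frj^{\perp}$. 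Indeed, given nil-invariance, skewness of $\ad(X)_{\rm ss}$ on this pairing is equivalent to the identity you are trying to establish, so the sketch is circular rather than a reduction to known facts; the phrase ``the relevant semisimple contributions land in the already-controlled part $\frw$'' is where a real argument is needed but none is given.

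What the paper does, and what is missing from your proposal, is a case analysis in the decomposition $\frg=\ker\ad(X)_{\rm ss}\oplus\im\ad(X)_{\rm ss}$. For $Y\in\ker\ad(X)_{\rm ss}$ one splits $A=A_0+A_1$ along this decomposition: the $A_0$-term is handled by nil-invariance alone (on $A_0$ and $Y$ the operator $\ad(X)$ coincides with $\ad(X)_{\rm n}$), while for the $A_1$-term one uses $A_1\in\im\ad(X)_{\rm ss}\subset[X,\frg]\subset\frj^{\perp}$, so that $A_1,X,Y$ all descend to $\bar\frg$ and the hypothesis that the \emph{reduced} metric is invariant can be applied --- this is the precise point where that hypothesis enters, which your sketch never pins down. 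For $Y\in\im\ad(X)_{\rm ss}$ one writes $Y=[X,W]$ with $W\in\frn$ and converts $\langle[A,X],[X,W]\rangle$ into $\langle A,[X,Y]\rangle$ via the Jacobi identity and skewness of $\ad(\frn)$ (which does follow from nil-invariance, as you note). Neither of these two steps, nor any substitute for them, appears in your write-up, so the crux of the proposition remains unproved; the surrounding identifications of both sides of \eqref{eq_A_and_O} with $\langle[A,X],Y\rangle$ and $\langle A,[X,Y]\rangle$ are fine.
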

\begin{proof} Let  $\ad(X)=\ad(X)_{\rm ss}+\ad(X)_{\rm n}$
be the Jordan decomposition. Observe that  
$\frg$ decomposes as
$
\frg = \im\ad(X)_{\rm ss}\oplus\ker\ad(X)_{\rm ss} $.
First, assume $Y\in\ker\ad(X)_{\rm ss}$.
We write $A$ as $A=A_0+A_1$ with $A_0\in\ker\ad(X)_{\rm ss}$
and $A_1\in\im\ad(X)_{\rm ss}$. Then
\begin{align*}
\langle [A,X],Y\rangle &= \langle [A_0,X],Y\rangle +\langle [A_1,X],Y\rangle \\
&=- \, \langle \ad(X)_{\rm n}A_0,Y\rangle + \langle A_1,[X,Y]\rangle \\
&=\langle A_0,\ad(X)_{\rm n}Y\rangle + \langle A_1,[X,Y]\rangle \\
&=\langle A_0,[X,Y]\rangle + \langle A_1,[X,Y]\rangle \\
&=\langle A,[X,Y]\rangle.
\end{align*}
For the second equality,  we used that $A_{1}\in[X,\frg]\subset\frj^{\perp}$. Then the assumption that the metric $\langle\cdot,\cdot\rangle$ on ${\bar{\frg}}$ is invariant can be applied. 

Next assume $Y\in\im\ad(X)_{\rm ss}$. 
Then there exists $W\in \frn$
such that $Y=[X,W]$, in particular $Y \in \frn$
(Remark \ref{rem_der}).
Then
\begin{align*}
\langle [A,X],Y\rangle &= \langle [A,X],[X,W]\rangle \\
&=  - \, \langle [[A,X],W],X\rangle \\
&= \langle [[W,A],X],X\rangle + \langle [Y,A],X\rangle \\
&= 0 - \langle A, [Y,X]\rangle \\
&= \langle A, [X,Y]\rangle.
\end{align*}
We used the fact that $[W,A]\in\frn$ to find
$\langle [[W,A],X],X\rangle=0$.
\end{proof} 

\subsection{Invariance of the metric}
Every non-abelian 
metric Lie algebra $(\frg, \met)$ with nil-invariant symmetric bilinear form $\met$ admits a non-trivial totally isotropic 
and central ideal $\frj$, see Corollary \ref{cor_znzg}. Therefore,
$(\frg, \met)$ reduces 
to a metric Lie algebra  $(\bar \frg, \met)$ of lower dimension.
Iterating this procedure we obtain: 

\begin{prop} \label{prop_completereduction}
Let $(\frg, \met)$ be a solvable metric 
Lie algebra with  nil-invariant non-degenerate symmetric bilinear form $\met$. After a finite sequence of successive reductions 
with respect to one-dim\-en\-sion\-al totally isotropic and central ideals,  
$(\frg, \met)$ reduces to an abelian metric Lie algebra 
with positive definite metric $\met$.
\end{prop}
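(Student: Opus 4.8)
The plan is to iterate the reduction construction until no non-abelian quotient remains, and then show that the final metric must be positive definite. First I would set up the induction on $\dim\frg$. If $\frg$ is abelian, Proposition \ref{prop_jh} (or direct inspection) shows that nil-invariance is vacuous and $\met$ is simply a non-degenerate symmetric form on a vector space; the substantive claim in that base case is that it is \emph{definite}, which I postpone to the last step. If $\frg$ is not abelian, Corollary \ref{cor_znzg}(2) provides a non-trivial totally isotropic characteristic (in particular central) ideal; by passing to a one-dimensional subspace of it I obtain a one-dimensional totally isotropic central ideal $\frj$. The reduction $(\bar\frg,\met) = (\frj^\perp/\frj,\met)$ is again a solvable metric Lie algebra, it is non-degenerate because $\frj$ is totally isotropic, and it is nil-invariant — this last point needs a short argument: the Jordan decomposition of $\ad_{\bar\frg}(\bar X)$ is induced by that of $\ad_\frg(X)$ on the $\ad$-invariant subspace $\frj^\perp$ and on the quotient $\frj^\perp/\frj$ (using the remarks on Jordan decomposition in Section \ref{sec_jordan}), so $\ad_{\bar\frg}(\bar X)_{\mathrm n}$ is the map induced by $\ad_\frg(X)_{\mathrm n}$, which is skew on $\frj^\perp$ and descends to a skew map on $\bar\frg$. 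Since $\dim\bar\frg = \dim\frg - 2 < \dim\frg$, the induction hypothesis applies to $(\bar\frg,\met)$.

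The second step is to run this reduction repeatedly. Each reduction strictly decreases the dimension by $2$, so after finitely many steps we reach a metric Lie algebra that admits no non-trivial totally isotropic central ideal that we can reduce by. By the contrapositive of Corollary \ref{cor_znzg}(2) — a reduced (here, non-degenerate, hence reduced) non-abelian solvable nil-invariant metric Lie algebra always has such an ideal — the terminal algebra must be abelian. So the sequence of successive reductions by one-dimensional totally isotropic central ideals terminates in an abelian metric Lie algebra $(\frak a,\met)$ with non-degenerate $\met$.

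The third and final step is to upgrade ``non-degenerate'' to ``positive definite'' on the terminal abelian algebra. Here I would argue that at no stage of the reduction can a totally isotropic line fail to exist unless the form is definite: concretely, if the terminal abelian metric $\met$ were indefinite, it would contain an isotropic vector, hence a one-dimensional totally isotropic ideal (every subspace of an abelian Lie algebra is an ideal, and it is automatically central), and we could reduce further — contradicting terminality. Therefore the terminal form has no isotropic vectors, i.e.\ it is definite; replacing $\met$ by $-\met$ if necessary (or observing that the sign is immaterial for the statement), it is positive definite. This completes the proof.

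The main obstacle I anticipate is the verification that nil-invariance is inherited by the reduction $\bar\frg = \frj^\perp/\frj$. One must be careful that the nilpotent part of $\ad_{\bar\frg}(\bar X)$ really is the descent of $\ad_\frg(X)_{\mathrm n}$ and not something larger: this uses that $\frj$ and $\frj^\perp$ are $\ad(\frg)$-invariant (the former since $\frj$ is central, the latter since $\met$ is nil-invariant, cf.\ Lemma \ref{lem_j}(3) and Corollary \ref{cor_radicalzero}), together with the functoriality of the Jordan decomposition with respect to restriction to invariant subspaces and passage to quotients recalled in Section \ref{sec_jordan}. Once that is in place, together with the skew-symmetry of $\ad_\frg(X)_{\mathrm n}$ descending through the totally isotropic $\frj$ to a skew map on the quotient form, everything else is bookkeeping on dimensions and the repeated application of Corollary \ref{cor_znzg}.
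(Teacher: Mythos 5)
Your argument is correct and takes essentially the same route as the paper's (very terse) proof: repeatedly reduce by a one-dimensional totally isotropic central ideal supplied by Corollary \ref{cor_znzg}, note that nil-invariance passes to the quotient, and conclude that termination forces an abelian, definite metric Lie algebra. Your explicit verification that nil-invariance is inherited (via functoriality of the Jordan decomposition on the invariant subspace $\frj^\perp$ and the quotient $\frj^\perp/\frj$) and your remark on the sign of the terminal definite form merely fill in details the paper leaves implicit.
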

\begin{proof}
We can apply the reduction again to $(\bar \frg, \met)$ to obtain a sequence of 
successive reductions. For this, note that the nil-invariance
property is inherited in each reduction step. The process 
terminates if and only if the reduction is abelian with a positive definite metric, for otherwise it can be 
further reduced.  
\end{proof}

If a reduction $(\bar{\frg},\met)$ has positive definite
scalar product, then it cannot be reduced further.
In this case we call it a \emph{complete reduction}.
From Proposition \ref{prop_completereduction} we immediately
obtain:

\begin{cor}\label{cor_definite_base}
If $\dim\frg=n$ and the signature of $\met$ is $s$, then
the unique complete reduction of $(\frg,\met)$ is isometric to
$(\RR^{n-2s},\met_+)$,
where $\met_+$ denotes the canonical positive definite
scalar product on $\RR^{n-2s}$.
\end{cor}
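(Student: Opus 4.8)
The plan is to keep track of how the dimension of $\frg$ and the signature of $\met$ change under a single reduction step, and then to invoke the classification of positive definite metric vector spaces.

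First I would examine one reduction by a one-dimensional totally isotropic central ideal $\frj$. Choose a Witt decomposition $\frg=\fra\oplus\frw\oplus\frj$ as in \eqref{eq_decompositionW}. Since $\fra$ is dually paired with $\frj$ by $\met$, we have $\dim\fra=\dim\frj=1$, hence $\dim\frw=\dim\frg-2$; moreover $\fra\oplus\frj$ is a hyperbolic plane, orthogonal to the non-degenerate subspace $\frw$. Because the type $(p,q)$ of a non-degenerate symmetric form is additive under orthogonal direct sums, and the signature in the sense of Section~\ref{sec:mla} is $\min(p,q)$, the type of $\met|_\frw$ is $(p-1,q-1)$; as long as $\met$ is not definite this means $\met|_\frw$ has signature one less than $\met$. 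Finally $\frj^\perp=\frw\oplus\frj$, so as a metric vector space the reduction $\bar\frg=\frj^\perp/\frj$ is isometric to $(\frw,\met|_\frw)$. In total, one reduction step decreases $\dim\frg$ by $2$ and the signature of $\met$ by exactly $1$.

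Next I would apply Proposition~\ref{prop_completereduction}: $(\frg,\met)$ admits a finite sequence of such reductions terminating in an \emph{abelian} metric Lie algebra $(\frb,\met_\frb)$ with $\met_\frb$ positive definite, i.e.\ of signature $0$. Tracking the two invariants along this sequence forces the number of steps to be exactly $s$ and yields $\dim\frb=n-2s$. Since $\frb$ has trivial Lie bracket and $\met_\frb$ is positive definite, an orthonormal basis of $\frb$ provides an isomorphism of metric Lie algebras $(\frb,\met_\frb)\cong(\RR^{n-2s},\met_+)$. As every complete reduction of $(\frg,\met)$ arises from some such sequence, all of them are isometric to $(\RR^{n-2s},\met_+)$; this simultaneously establishes the asserted uniqueness (up to isometry) and the explicit model.

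The argument is essentially routine; the one point that needs care is the signature bookkeeping in the first step, namely that reducing by a \emph{one-dimensional} totally isotropic ideal lowers the signature by exactly one. This is a statement purely about the symmetric bilinear form and is independent of the Lie bracket, so it remains valid even though the intermediate reductions produced by Proposition~\ref{prop_completereduction} need not be abelian.
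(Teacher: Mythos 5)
Your argument is correct and is exactly the reasoning the paper treats as immediate: Corollary \ref{cor_definite_base} is stated there as a direct consequence of Proposition \ref{prop_completereduction}, with the dimension-and-Witt-index bookkeeping (each reduction by a one-dimensional totally isotropic central ideal removing a hyperbolic plane, hence lowering $\dim\frg$ by $2$ and the signature by $1$) left implicit. Your write-up simply makes that implicit count explicit, so there is nothing methodologically different to compare.
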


We further deduce: 

\begin{cor}\label{cor_invariant}
Let $(\frg, \met)$ be a solvable metric 
Lie algebra with  nil-invariant non-degenerate symmetric bilinear form  $\met$. Then $\met$ is invariant.
\end{cor}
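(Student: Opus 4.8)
The plan is to deduce invariance from the reduction machinery of Section~\ref{sec_reduction}. By Proposition~\ref{prop_completereduction} there is a finite chain of reductions
\[
(\frg,\met) = (\frg_0,\met)\ \reduce\ (\frg_1,\met)\ \reduce\ \cdots\ \reduce\ (\frg_k,\met),
\]
in which each step is reduction by a one-dimensional totally isotropic central ideal, and $(\frg_k,\met)$ is abelian with positive definite metric. Each $(\frg_i,\met)$ is again solvable, non-degenerate (by the construction of the reduction) and nil-invariant (this property is inherited at each step, as noted in the proof of Proposition~\ref{prop_completereduction}). On the abelian Lie algebra $\frg_k$ all brackets vanish, so $\met$ is trivially invariant there. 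Hence it suffices to prove the lifting statement: \emph{if the reduction $(\bar\frg,\met)$ of a solvable nil-invariant non-degenerate metric Lie algebra $(\frg,\met)$ by a one-dimensional totally isotropic central ideal $\frj\subset\zen(\frg)$ has invariant metric, then so does $(\frg,\met)$.} Applying this along the chain, from $\frg_k$ back to $\frg_0$, gives the corollary.

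To prove the lifting statement, fix a Witt decomposition $\frg = \fra\oplus\frw\oplus\frj$ as in~\eqref{eq_decompositionW}. Since $\dim\frj = 1$ we have $\dim\fra = 1$, so $\fra$ is an abelian subalgebra and Lemma~\ref{lem_An1} gives $\xi_A = 0$ for all $A\in\fra$. Thus the bracket of $\frg$ is governed by: $\frj$ is central; $[A,X] = \bar A\bar X\in\frw$ for $A\in\fra$ and $X\in\frj^\perp$; and $[X,Y] = [\bar X,\bar Y]_{\bar\frg} + \omega(\bar X,\bar Y)$ for $X,Y\in\frj^\perp$, with $[\bar X,\bar Y]_{\bar\frg}\in\frw$ and $\omega(\bar X,\bar Y)\in\frj$. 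In particular every bracket in $\frg$ has trivial $\fra$-component, so $[\frg,\frg]\subset\frw\oplus\frj = \frj^\perp$.

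It remains to check $\langle[Z,X],Y\rangle = -\langle X,[Z,Y]\rangle$ with $X,Y,Z$ ranging over $\fra$, $\frw$ and $\frj$. If one of the three lies in $\frj$, both sides vanish: $\frj$ is central, so a bracket involving a $\frj$-element is zero, while every bracket lies in $\frj^\perp$ and hence pairs trivially with $\frj$. If all three lie in $\frw\subset\frj^\perp$, the identity follows from invariance of the metric on $\bar\frg$, the $\omega$-terms dropping out because they lie in $\frj\perp\frw$. If exactly one of the three equals some $A\in\fra$ and the other two lie in $\frw$, the relevant terms are $\langle\bar A\bar X,\bar Y\rangle$ and $\langle\omega(\bar X,\bar Y),A\rangle$ (in some arrangement with signs), which are equated by Proposition~\ref{prop_A_and_O}; together with the antisymmetry of the cocycle $\omega$ the two contributions cancel. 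If at least two of the arguments lie in $\fra$, both sides vanish since $[A,A] = 0$ and $\bar A\bar X\in\frw$ is orthogonal to $\fra$. This proves invariance of $\met$ on $\frg$, and hence the corollary.

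The crux is the lifting step, and there the hypothesis $\dim\fra = 1$ is essential: it forces $\xi_A = 0$, so that the only data relating $\frg$ to $\bar\frg$ transverse to the reduction is the extension cocycle $\omega$, which Proposition~\ref{prop_A_and_O} pins down in terms of the induced derivation $\bar A$. For a reduction by a higher-dimensional central ideal the maps $\xi_A$ would intervene and Proposition~\ref{prop_A_and_O} alone would not close the argument, which is precisely why Proposition~\ref{prop_completereduction} performs the reduction one dimension at a time. Apart from this, the work is the bookkeeping of which graded summand each bracket and each value of $\omega$ occupies, so that the orthogonality relations among $\fra$, $\frw$ and $\frj$ can be invoked.
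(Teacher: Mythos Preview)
Your proof is correct and follows essentially the same route as the paper's: reduce to the abelian positive-definite base via Proposition~\ref{prop_completereduction}, then climb back up by an inductive lifting step using Lemma~\ref{lem_An1} (available because $\dim\fra=1$) and Proposition~\ref{prop_A_and_O}. The paper compresses the lifting into the phrase ``it is then easily verified using equations~\eqref{eq_defomega} and~\eqref{eq_A_action}'', whereas you have written out the case analysis over the summands $\fra$, $\frw$, $\frj$ explicitly; your final paragraph also makes explicit why the one-dimensional reduction is needed, which the paper leaves implicit.
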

\begin{proof} 
After $\ell$ sucessive reduction steps, the reduction 
$(\frg_{\ell}, \met) = (\fra, \met)$ is abelian with 
positive definite symmetric bilinear form. Then $\met$ is clearly invariant on $\fra$, since $\fra$ is abelian.  We assume now inductively that  the symmetric bilinear form $\met$ on $ \frg_{k+1}$ is invariant. Thus both Lemma \ref{lem_An1} and equation \eqref{eq_A_and_O} apply to the $k$-th reduction step.  It is then easily verified  using equations \eqref{eq_defomega} and \eqref{eq_A_action} (as in the proof of Proposition \ref{prop_A_and_O}) that the metric $\met$ on the Lie algebra $ \frg_{k}$
is invariant.\footnote{Indeed, it follows that $( \frg_{k}, \met)$ is obtained from  $( \frg_{k+1}, \met)$ by the double extension procedure as defined by Medina and Revoy \cite{MR}.} 
\end{proof} 

\subsection{The main theorem on nil-invariant scalar products}

{
\renewcommand{\thethm}{\ref{thm_nilinvariant}}
\begin{thm}
Let $\frg$ be a solvable Lie algebra and  $\met$ a 
nil-invariant symmetric bilinear form on $\frg$.
Then $\langle\cdot,\cdot\rangle$ is invariant.
\end{thm}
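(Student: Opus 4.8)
The plan is to reduce the general statement to the non-degenerate case, which has already been settled in Corollary \ref{cor_invariant}. Let $\frr=\frg^\perp$ be the metric radical of $\met$. By Corollary \ref{cor_radicalzero}, $\frr$ is an ideal of $\frg$, so the quotient $\bar\frg=\frg/\frr$ is again a solvable Lie algebra, and $\met$ descends to a well-defined symmetric bilinear form on $\bar\frg$ via $\langle\bar X,\bar Y\rangle:=\langle X,Y\rangle$; this is legitimate precisely because $\frr$ lies in the radical, and by construction the descended form on $\bar\frg$ is non-degenerate.

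The next step is to check that the descended form is still nil-invariant. Since $\frr$ is an ideal, it is $\ad_\frg(X)$-invariant for every $X\in\frg$, and hence the additive Jordan decomposition of $\ad_\frg(X)$ induces that of the operator $\ad_{\bar\frg}(\bar X)$ which it defines on $\frg/\frr$ (see Section \ref{sec_jordan}); in particular $\ad_{\bar\frg}(\bar X)_{\rm n}$ is the operator induced by $\ad_\frg(X)_{\rm n}$. As $\ad_\frg(X)_{\rm n}$ is skew for $\met$ on $\frg$ and $\frr$ sits in the radical, the induced operator on $\bar\frg$ is skew for the descended form. Thus $(\bar\frg,\met)$ is a solvable metric Lie algebra with nil-invariant non-degenerate symmetric bilinear form, and Corollary \ref{cor_invariant} applies: $\met$ is invariant on $\bar\frg$.

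Finally I would lift invariance back to $\frg$: since the bracket descends to $\bar\frg$, for all $X,Y,Z\in\frg$
\[
\langle [X,Y],Z\rangle=\langle [\bar X,\bar Y]_{\bar\frg},\bar Z\rangle=-\langle\bar Y,[\bar X,\bar Z]_{\bar\frg}\rangle=-\langle Y,[X,Z]\rangle,
\]
so $\met$ is invariant on $\frg$, as desired. The genuinely hard part of the whole argument is not this last reduction but lies upstream: showing that $\frr$ is an ideal (Corollary \ref{cor_radicalzero}, via the analysis of the characteristic ideal $\frj_0=\zen(\frn)\cap[\frg,\frn]$) and the structure theory behind Corollary \ref{cor_invariant}. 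Granting those, the passage from the reduced non-degenerate case to the general case is essentially formal; the only point requiring a moment's care is the stability of nil-invariance under passing to the quotient, which rests on the compatibility of the Jordan decomposition with $\ad$-invariant quotients.
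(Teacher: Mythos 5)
Your proposal is correct and follows essentially the same route as the paper: quotient by the metric radical $\frr$ (an ideal by Corollary \ref{cor_radicalzero}), apply Corollary \ref{cor_invariant} to the non-degenerate quotient, and lift invariance back. You merely spell out the step the paper calls ``straightforward,'' namely that nil-invariance descends to $\frg/\frr$ via the compatibility of the Jordan decomposition with invariant quotients, which is a correct and welcome clarification.
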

\addtocounter{thm}{-1}
}

\begin{proof}
Let $\frr$ be the metric radical of the nil-invariant
form $\met$ on the solvable Lie algebra $\frg$.
By Corollary \ref{cor_radicalzero}, $\frr$ is an ideal in $\frg$.
So $\met$ induces a non-degenerate symmetric bilinear form,
also denoted by $\met$, on $\frg/\frr$.
The invariance of $\met$ on $\frg/\frr$ is given by
Corollary \ref{cor_invariant}.
It is then straightforward to check that the original bilinear
form $\met$ on $\frg$ is invariant as well.
\end{proof}

\section{Proofs of Theorems \ref{mthm_lattice} and \ref{mthm_invariant}}
\label{sec_proofs}

Let $M$ be a compact pseudo-Riemannian manifold and
$G$ a solvable connected Lie group of isometries which acts
transitively on $M$. Let $x \in M$
and $H=G_x$ denote the stabilizer of $x$.
Then $H$ is a uniform subgroup of $G$.

Let $\frg$ and $\frh$ denote the Lie algebras of $G$ and $H$,
respectively.
The pull-back of the pseudo-Riemannian metric $\g$ on $M$ via the orbit map at $x$ is 
a left-invariant symmetric bilinear tensor on $G$ and
restricts to a symmetric
bilinear form $\met$ on $\frg$.
Since $\g$ is non-degenerate, the metric radical $\frr$ of
$\met$ in $\frg$ (as defined in Section \ref{sec:mla}) is precisely the Lie algebra $\frh$ of $H$.
As $G$ is a group of isometries, $G$ acts effectively on $M$.
In particular, $H$ does not contain any connected subgroup which is normal in $G$. Therefore,  the metric radical  $\frr = \frh$ 
does not contain any non-trivial ideal of $\frg$. That is, the metric Lie algebra $(\frg, \met)$ is reduced in the sense of Section \ref{sec_h_0}.

Note  that, since $H$ is the isotropy group at $x$, $\Ad_\frg(H)$ acts by linear isometries of $\met$.
Let $\raA$ denote the Zariski closure of $\Ad_\frg(G)$ in
$\GL(\frg)$.
The density Theorem \ref{thm:density} implies, in particular, 
that the Zariski closure of ${\Ad_\frg(H)}$  contains all unipotent elements of $\raA$. Since $\met$ is preserved by $H$,
its Zariski closure also acts by isometries. Taking derivatives it follows that, for all $X \in \frg$, the nilpotent parts $\ad(X)_{\rm n}$ (in the Jordan decomposition
of $\ad(X)$) are skew-symmetric with respect to $\met$.
This means $\met$ is nil-invariant in the
sense of Definition \ref{def_nilinvariant}.

\begin{proof}[Proof of Theorem \ref{mthm_lattice}]
Since $\met$ is nil-invariant and reduced, Proposition \ref{prop_radicalzero} implies that $\frh =\frr={\bf0}$.  
Hence $H$ is a discrete subgroup of $G$, which implies that $G$ acts almost freely on $M$.
\end{proof}


\begin{proof}[Proof of Theorem \ref{mthm_invariant}]
Since $H$ is discrete by Theorem \ref{mthm_lattice}, the pull-back $\g_G$ of the pseudo-Riemannian metric $\g$ on $M$ is a pseudo-Riemannian metric on $G$. Since $\met$ is nil-invariant, Theorem  \ref{thm_nilinvariant} implies that $\met$ is invariant by all of $\frg$. That is, all operators $\ad(X)$, $X \in \frg$, are skew-symmetric with respect to $\met$.
This implies that the pull-back metric  $\g_G$ is
bi-invariant (cf.~O'Neill \cite[Proposition 11.9]{oneill}).
\end{proof}


\section{Finite invariant measure and solvable fundamental group}
\label{sec:invarvolume}


In this section, we will prove:
{
\renewcommand{\thethm}{\ref{thm:invarvolume}}
\begin{thm}
Let $L$ be a connected Lie group that acts almost effectively and transitively on the compact aspherical manifold $M$.  Assume further that $L$ preserves a finite Borel measure on $M$. 
If the fundamental group of $M$ is solvable, then $L$ is solvable.
\end{thm}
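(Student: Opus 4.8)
The strategy is to reduce to the case where $L$ is simply connected with trivial center, and then exploit the structure theory of Lie groups together with the hypothesis that $\pi_1(M)$ is solvable. Write $L = R \rtimes S$ (Levi decomposition) with $R$ the solvable radical and $S$ a maximal semisimple subgroup; the goal is to show $S$ is trivial. Let $K$ be the isotropy group at a base point $x \in M$, so $M \cong L/K$. Since $M$ is aspherical, the long exact homotopy sequence of the fibration $K \to L \to M$ together with $\pi_2(M) = 0$ and $\pi_1(L/L_\circ)$ being handled separately shows that $\pi_0(K)$ surjects onto $\pi_1(M)$ modulo the image of $\pi_1(L)$, and more importantly that $K_\circ$ is contained in a maximal compact subgroup of $L$ (this is where asphericity of $M$ enters: an aspherical homogeneous space $L/K$ forces $K$ to contain, up to conjugacy and finite index, a maximal compact subgroup of $L$, since otherwise $L/K$ retracts onto a nontrivial compact homogeneous space carrying nonzero higher homotopy).

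\textbf{Main steps.} First, reduce to $L$ simply connected: pass to the universal cover $\tilde L \to L$ and the induced action on $\tilde M$; solvability of $\pi_1(M)$ controls the discrepancy, and the finite invariant measure lifts to a finite invariant measure on the relevant quotient because $\pi_1(M)$ solvable implies the deck group is amenable, so by a Følner-averaging argument one retains a finite invariant measure on a suitable compact model. Second, use the finite invariant measure on $M = L/K$ to constrain $S$: the projection of $K$ to $L/R = S$ (modulo center) must be a lattice-like or cocompact subgroup for the measure to be finite, but a finite invariant measure on $S/(K \cap S)$ together with asphericity forces $K \cap S$ to be cocompact in $S$; since $S$ is semisimple and $S/(K\cap S)$ is aspherical, $K \cap S$ contains a maximal compact subgroup of $S$, hence $S/(K\cap S)$ is a compact aspherical homogeneous space for a semisimple group. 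Third, invoke the classification fact already recalled in the introduction: a simple Lie group acting (locally effectively) on a compact aspherical homogeneous manifold must be locally isomorphic to $\SL_2(\RR)$. So the only obstruction is factors locally isomorphic to $\SL_2(\RR)$. Fourth, rule these out using the solvability of $\pi_1(M)$: if $S$ has an $\SL_2(\RR)$-factor acting locally effectively, then $\pi_1(M)$ contains a subgroup commensurable to a lattice in $\SL_2(\RR)$ (a surface group or free group), which is not solvable — contradiction. Hence $S = \one$ and $L = R$ is solvable.

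\textbf{Expected main obstacle.} The delicate point is the reduction to the simply connected, almost-effective setting while preserving \emph{both} the finite invariant measure \emph{and} transitivity, since quotienting by the center or passing to covers can destroy compactness of the relevant quotient. I expect one must argue carefully that, because $\pi_1(M)$ is solvable (hence amenable) and $M$ is aspherical with $\pi_1(M) \cong$ a cocompact subgroup-theoretic invariant of the $L$-action, the center $Z(L)$ acts with a finite invariant measure on its orbits and can be divided out, replacing $L$ by $L/Z(L)$ acting on a compact aspherical quotient with solvable fundamental group and still preserving a finite measure (amenability of the fiber group rescues the invariant measure via an averaging/Følner argument). The second subtle point is justifying that an aspherical homogeneous space of a semisimple group with finite invariant measure has isotropy containing a maximal compact subgroup; this combines Mostow's theorem on the structure of solvmanifolds and homogeneous spaces of reductive type with the vanishing of higher homotopy, and should be attributed to the standard theory of homogeneous aspherical manifolds rather than reproved here.
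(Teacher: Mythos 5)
Your outline has two genuine gaps, one of them fatal. First, the topological claim you lean on is false: asphericity of $M=L/K$ does \emph{not} force $K$ (or $K_\circ$) to contain, up to conjugacy and finite index, a maximal compact subgroup of $L$. For a cocompact torsion-free lattice $\Gamma\leq\SL_2(\RR)$ the space $\SL_2(\RR)/\Gamma$ is compact and aspherical, yet $\Gamma$ is discrete and contains no conjugate of $\SO_2$ even up to finite index; likewise $\SL_2(\RR)/AN\cong S^1$ is compact aspherical with non-compact connected isotropy. The correct topological input is the Conner--Raymond theorem (a compact connected group acting almost effectively on a compact aspherical manifold is a torus), which yields only that the Levi factor $S$ is isomorphic to $\tilde\SL_2(\RR)^{\ell}$ (Lemma \ref{lem:SL2}); it says nothing about the isotropy containing a maximal compact, and your intermediate claims about $K\cap S$ being cocompact (note also that the relevant object is the closure of the projection $\p(K)$ to $S$, not $K\cap S$) do not follow.

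Second, and more seriously, your step four asserts exactly the statement that needs proof: that an $\SL_2(\RR)$-factor forces $\pi_1(M)$ to contain (or surject onto) a non-solvable lattice. This is precisely where the finite invariant measure must be used, and it is the core of the argument: $\SL_2(\RR)$ acts transitively on the compact aspherical circle $\SL_2(\RR)/AN$ with abelian fundamental group, and only the non-existence of a finite invariant measure on $\tilde\SL_2(\RR)/H$ when $H_\circ$ is conjugate into $AN$ (a Furstenberg/modular-character argument, Lemma \ref{lem:SL2_dec}) rules such homogeneous models out and forces $\p(H)$ to be a \emph{discrete} cocompact, hence non-solvable, subgroup of $S$ (Theorem \ref{thm:asph_invarvol}). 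One must also dispose of the possibility that $H_\circ$ itself has a non-trivial Levi factor, which requires an extra algebraic argument (Lemma \ref{lem:algebraiclemma2}) combined with almost effectiveness. Finally, your proposed reduction via the universal cover of $M$ and F\o lner averaging is both unnecessary and doubtful (compactness and finiteness of the measure are lost on $\tilde M$); the standard move is to lift $L$ to its universal covering group acting on the \emph{same} compact $M$, with isotropy the preimage of $K$, so the measure never needs to be transported.
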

\addtocounter{thm}{-1}
}

Clearly, if $L$ preserves a pseudo-Riemannian metric on $M$, there exists an invariant Borel measure. Therefore, Theorem \ref{thm:invarvolume} implies the first assertion of Corollary \ref{mcor_isoM} in the introduction, namely that the identity component of the isometry group of a homogeneous pseudo-Riemannian metric on $M$ is solvable.

\subsection{Aspherical homogeneous spaces with invariant volume}

Consider a compact aspherical homogeneous space
$M= L/H$ where $L$ is a simply connected Lie group which acts almost effectively on $M$.  Therefore, we can write $L$  as a semidirect product 
\[
L = R  \rtimes S \; ,
\]
where $R$ is the solvable radical of $L$ and $S$ is a Levi subgroup. Recall that a \emph{Levi subgroup}  of $L$  is a maximal connected semisimple subgroup. A basic observation on such spaces is the following:

\begin{lem}\label{lem:SL2}
The Levi subgroup $S$ is  isomorphic to $\tilde \SL_{2}(\RR)^{\ell}$.
\end{lem}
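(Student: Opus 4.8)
The goal is to show that the Levi subgroup $S$ of $L$, where $L$ acts almost effectively and transitively on a compact aspherical manifold $M$ with solvable fundamental group, must be isomorphic to a product of copies of the universal cover $\tilde\SL_2(\RR)$. The starting point is the standard fact that, for a transitive action on an aspherical manifold, the homotopy long exact sequence forces strong constraints: writing $M = L/H$ with $L$ simply connected, one has $\pi_2(M) = 0$ and $\pi_1(M) \cong \pi_0(H)$, while the identity component $H_\circ$ is homotopy equivalent to a point modulo the fact that $L$ is contractible as a manifold only if it is solvable. The key homotopy-theoretic input is that $L$, being simply connected with semisimple part $S$, deformation retracts onto a maximal compact subgroup $K_S \subset S$; since $M$ is aspherical and $H$ meets every component, the fibration $H \to L \to M$ shows that the maximal compact subgroup of $H_\circ$ must absorb $K_S$ up to homotopy.

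\textbf{Key steps.} First I would recall that $L = R \rtimes S$ acts with the radical $R$ already accounting for the solvable structure, so the obstruction to solvability is entirely carried by $S$. Second, I would use that a compact aspherical manifold $M$ of dimension $n$ has $\cohom^*(M;\QQ)$ satisfying Poincaré duality in degree $n$, hence $\cohom^n(M;\QQ) \neq 0$; pulling this back along $L \to M$ and using the invariant finite measure (which by Theorem~\ref{thm:invarvolume}'s setup gives a nonzero class, or more directly gives that $L$ is unimodular) one deduces that the top cohomology of the compact dual homogeneous space of $S/K_S$ interacts with $\cohom^*(M)$. Third — and this is the crucial structural point — one invokes that $S$ acting on an aspherical manifold, with $\pi_1(M)$ solvable, cannot have a factor other than $\tilde\SL_2(\RR)$: any other simple factor $S_i$ has a maximal compact $K_i$ of rank $\geq 1$ giving nontrivial rational cohomology in even degrees $> 0$ for $S_i/K_i$, which by a spectral sequence or transfer argument would force non-solvable homotopy into $\pi_1(M)$ or would violate asphericity (contractibility of the universal cover). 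The factor $\tilde\SL_2(\RR)$ is special because $\SL_2(\RR)/\SO(2)$ is the hyperbolic plane, contractible, so $\tilde\SL_2(\RR)$ itself is contractible as a manifold and can act on contractible spaces without obstruction.

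\textbf{Main obstacle.} The hardest part will be ruling out simple factors of $S$ other than $\tilde\SL_2(\RR)$ in a clean way — i.e., showing that if $S_i$ is a simple factor with maximal compact $K_i \neq \{e\}$ of rank $\geq 1$, then the presence of $S_i$ in a transitive action on a compact aspherical $M$ with solvable $\pi_1(M)$ leads to a contradiction. I expect this to require either a careful analysis of the fibration $H_\circ \to L \to \tilde M$ (the universal cover), showing $\tilde M$ contractible forces $K_S$ to embed into (a maximal compact of) $H_\circ$ with the right cohomological behavior, or an appeal to the classification of which semisimple groups act on aspherical manifolds with prescribed fundamental group. Since $\tilde\SL_2(\RR)$ has maximal compact $\RR$ (the universal cover of $\SO(2)$), which is contractible, it is the unique simple Lie group that is contractible as a topological space — this is the structural fact that singles it out. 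I would then conclude that every simple factor of $S$ must be $\tilde\SL_2(\RR)$, giving $S \cong \tilde\SL_2(\RR)^\ell$.
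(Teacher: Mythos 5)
Your structural insight at the end -- that $\tilde\SL_2(\RR)$ is singled out among simple Lie groups by having abelian maximal compact subgroup, equivalently a contractible universal cover -- is the right characterization, but the proof has a genuine gap at precisely the point you yourself flag as the ``main obstacle'': you never actually rule out simple factors of $S$ with non-abelian maximal compact subgroup. The homotopy-theoretic facts you invoke do not do this. From asphericity one gets that $\tilde M\cong L/H_\circ$ is contractible and hence that $H_\circ\hookrightarrow L$ is a homotopy equivalence, so the maximal compact of $H_\circ$ ``absorbs'' $K_S$ up to homotopy -- but this is not a contradiction, since almost effectiveness only forbids positive-dimensional \emph{normal} subgroups of $L$ inside $H$, and nothing so far prevents $H_\circ$ from containing (a group homotopy equivalent to) a non-abelian compact group. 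The proposed ``spectral sequence or transfer argument\dots would force non-solvable homotopy into $\pi_1(M)$ or would violate asphericity'' is not substantiated and, as stated, is not a proof. The missing ingredient, which is exactly what the paper uses, is the theorem of Conner and Raymond \cite{CR}: the only compact connected Lie groups acting almost effectively on a compact aspherical manifold are tori. Given this, the argument is two lines: the maximal compact subgroup of $S$ acts almost effectively on $M$, hence is a torus; a semisimple Lie group whose maximal compact subgroup is abelian is locally isomorphic to $\SL_2(\RR)^{\ell}$ (compact simple factors are excluded since they would themselves be non-abelian compact subgroups), and since $S$ is simply connected, $S\cong\tilde\SL_2(\RR)^{\ell}$. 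Your sketch would only become a proof if you either cite this result or supply a complete substitute for it.

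Two further corrections. First, your dividing line ``maximal compact $K_i$ of rank $\geq 1$'' is wrong: $\SO(2)\subset\SL_2(\RR)$ also has rank $1$; the relevant dichotomy is abelian (torus) versus non-abelian maximal compact. Second, the lemma needs neither the invariant measure nor the solvability of $\pi_1(M)$; in the paper these hypotheses enter only later, in Theorem \ref{thm:asph_invarvol} and Theorem \ref{thm:invarvolume}. Building them into this lemma (Poincar\'e duality classes, unimodularity, solvability of $\pi_1$) obscures the fact that the statement is purely a consequence of compactness, asphericity, and almost effectiveness of the action.
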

\begin{proof}
The only compact connected groups that act almost effectively on compact aspherical manifolds are tori
(cf.~Conner and Raymond \cite{CR}).
As a consequence, the maximal compact subgroup in the semisimple
group $S$ is a torus. It follows that the universal covering group
$\tilde{S}$ of $S$ is isomorphic to $ \tilde{\SL}_{2}(\RR)^{\ell}$.
Since $S$ as above is simply connected, $S$ is isomorphic to $\tilde \SL_{2}(\RR)^{\ell}$.
\end{proof}

Let $\p: L \to S$ denote the projection homomorphism. We shall prove: 

\begin{thm} \label{thm:asph_invarvol}
Assume that $L$ preserves a finite Borel measure on $M$.
Then $H \cap R$ is uniform in $R$ and the projection $\p(H)$ is a discrete uniform subgroup in $S$. 
\end{thm}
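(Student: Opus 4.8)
The goal is to show that $H\cap R$ is uniform in $R$ and that $\p(H)$ is discrete and uniform in $S\cong\tilde\SL_2(\RR)^\ell$. The natural strategy is to push the invariant measure on $M=L/H$ forward under $\p$ and analyze the resulting measure on $S/\p(H)$, combined with an analysis of the fibres $R/(H\cap R)$. First I would record the elementary structural facts: since $L=R\rtimes S$ and $\p:L\to S$ is the projection, $\p(H)$ is a subgroup of $S$ and the ``fibre'' of $L/H\to S/\p(H)$ over the base point is $R/(H\cap R)$ (more precisely, $(H\cap R)\backslash R$, but since we only care about compactness/uniformity this is harmless). The compactness of $M$ immediately forces $S/\p(H)$ to be compact, i.e.\ $\p(H)$ is a uniform subgroup of $S$, and it forces the fibre $R/(H\cap R)$ to be compact, i.e.\ $H\cap R$ is uniform in $R$. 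So the first two assertions that look substantive are in fact cheap consequences of compactness; the only genuinely nontrivial point is the \emph{discreteness} of $\p(H)$ in $S$.

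\textbf{The main point: discreteness of $\p(H)$.} Here is where the invariant measure hypothesis and the asphericity of $M$ enter. Let $\mu$ be the finite $L$-invariant Borel measure on $M$. I would argue that $H$ (being the stabilizer of a point in a compact homogeneous space carrying a finite invariant measure) has the property that $\Delta_L|_H=\Delta_H$, so that $L/H$ supports a finite invariant measure in the standard sense, and more importantly that the identity component $H_\circ$ is contained in the radical $R$: indeed, if $H_\circ$ had a nontrivial projection to $S\cong\tilde\SL_2(\RR)^\ell$, then $\p(H_\circ)$ would be a nontrivial connected subgroup of $\tilde\SL_2(\RR)^\ell$, hence nonunimodular or noncompact in a way incompatible with the existence of a finite invariant measure on the homogeneous space $S/\p(H)$ — more carefully, one uses that a connected subgroup of $\tilde\SL_2(\RR)$ is either all of it, or a conjugate of the solvable $\mathrm{Aff}(\RR)$-type subgroup, or amenable abelian, and in the non-trivial non-full cases the homogeneous space $\tilde\SL_2(\RR)/\p(H_\circ)$ carries no finite invariant measure, while if $\p(H_\circ)$ is everything in some factor then that $\SL_2$-factor acts through a compact-free quotient which again kills the finite invariant measure on a positive-dimensional factor unless the action is trivial there. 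In all cases one concludes $\p(H_\circ)=\{e\}$, i.e.\ $H_\circ\subset R$. Therefore $\p(H)$ is discrete in $S$, and being uniform it is a uniform lattice in $S=\tilde\SL_2(\RR)^\ell$.

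\textbf{Filling in the fibre statement.} Once $H_\circ\subset R$, we have $H_\circ\subset H\cap R$, so $H\cap R$ is cocompact in $R$ as soon as $R/(H\cap R)$ is compact, which follows from compactness of $M$ together with compactness of $S/\p(H)$ by a standard fibration argument: $M=L/H$ fibres over the compact base $S/\p(H)$ with fibre $R/(H\cap R)$, and a fibre bundle over a compact base with compact total space has compact fibre. This gives the uniformity of $H\cap R$ in $R$. Collecting the three conclusions — $H\cap R$ uniform in $R$, $\p(H)$ discrete, $\p(H)$ uniform in $S$ — completes the proof.

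\textbf{Expected main obstacle.} The delicate step is ruling out a nontrivial connected piece in $\p(H)$ using only the finite invariant measure. The asphericity of $M$ has already been used (via Lemma \ref{lem:SL2}) to pin down $S\cong\tilde\SL_2(\RR)^\ell$, so the remaining work is really inside $\tilde\SL_2(\RR)^\ell$: one must check that no positive-dimensional proper connected subgroup $Q\lneq\tilde\SL_2(\RR)$ gives a homogeneous space $\tilde\SL_2(\RR)/Q$ with a finite invariant measure, and that the full subgroup in a factor cannot occur nontrivially either. This is a concrete Lie-theoretic verification (the proper connected subgroups of $\SL_2(\RR)$ up to conjugacy are the one-parameter subgroups and the Borel subgroup $\mathrm{Aff}(\RR)$, none of which is cocompact-with-invariant-measure or unimodularly embedded in the right way), but care is needed to handle the universal cover and the product structure correctly, and to make sure the invariant measure really descends appropriately along $\p$.
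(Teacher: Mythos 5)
Your reduction of the theorem to ``rule out a nontrivial connected piece of $\p(H)$'' is the right instinct, but the way you propose to rule it out has a genuine gap at exactly the hard point. The finite invariant measure cannot exclude the case in which $\p(H_{\circ})$ fills a whole factor $\tilde\SL_2(\RR)$ of $S$: if the closure $C$ of $\p(H)$ contains such a factor $S_1$ and is a lattice modulo $S_1$, then $S/C$ carries a perfectly good finite $S$-invariant measure, so no contradiction of the kind you sketch can arise. Your sentence ``that $\SL_2$-factor acts through a compact-free quotient which again kills the finite invariant measure'' does not correspond to an actual argument. This non-solvable case is where the paper does most of its work, and it uses the asphericity/almost-effectiveness input in an essential way, not the measure: one removes the offending factors to get a smaller group $L'$ still acting transitively with $\p(H_{\circ}\cap L')$ solvable, concludes from the first (solvable) case that $H\cap R$ and hence $H\cap N$ are uniform, observes that $H_{\circ}$ must act trivially on $N/(H_{\circ}\cap N)$ because a lattice admits no connected group of automorphisms, so $[\frs_1,\frn]\subset\frh\cap\frn$ for a Levi factor $S_1$ of $H_{\circ}$; then Lemma \ref{lem:algebraiclemma2} upgrades this to an ideal $\frb$ of $\frl$ contained in $\frh$, and almost effectiveness forces $\frb=\zsp$, whence $S_1$ centralizes $R$, is normal in $L$, and is trivial. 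None of this mechanism appears in your proposal, and it cannot be replaced by measure-theoretic considerations on $S/\p(H)$.

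Two further points. First, your claim that uniformity of $H\cap R$ is a ``cheap consequence of compactness'' via a fibration $L/H\to S/\p(H)$ with fibre $R/(H\cap R)$ is not valid as stated: $RH$ need not be closed in $L$, and compactness of $L/H$ does not in general imply compactness of $R/(H\cap R)$. (For instance, embed $\ZZ$ in $\RR\times K$, $K$ compact semisimple, by $n\mapsto(n,g^n)$ with $\langle g\rangle$ dense in a maximal torus: the quotient is compact, yet the intersection with the radical $\RR$ is trivial. Compact factors are excluded here by asphericity, but that only shows the claim needs the structure theory, not that it is free.) In the paper, uniformity of $H\cap R$ is deduced \emph{after} the discreteness of $\p(H)$ (equivalently after $H_{\circ}\leq R$), at which point $RH=\p^{-1}(\p(H))$ is closed and the fibration argument becomes legitimate. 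Second, even in the solvable case your measure argument is aimed at the wrong space: you consider $\tilde\SL_2(\RR)/\p(H_{\circ})$, but the measure you actually have lives on $S/C$ with $C=\ac{\p(H)}$ (closure) uniform and a priori non-discrete, and measures only push forward to quotients by \emph{larger} subgroups. The statement needed is the paper's Lemma \ref{lem:SL2_dec}(2) (used via Lemma \ref{lem:SmodC} and Proposition \ref{prop:pHisdiscrete}): a uniform subgroup of $\tilde\SL_2(\RR)$ containing a nontrivial connected solvable normal subgroup lies in $\mathsf{Z}\cdot AN$, and a modular character computation shows the quotient admits no finite invariant measure. That part of your sketch is repairable along these lines; the Levi-factor case and the fibration claim are the real gaps.
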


Observe that Theorem \ref{thm:asph_invarvol} implies 
Theorem \ref{thm:invarvolume}.
Indeed, assume that  $\pi_{1}(M) = H / H_{\circ}$ is solvable. Since $\p(H)$ is a discrete subgroup of $S$, $H_{\circ}$ is contained in $R$, and $\p(H)$ is solvable and a uniform lattice in $S$. This implies $S = \one$. Therefore, $L = R$ is solvable. This proves Theorem \ref{thm:invarvolume}.\\ 

The remainder of this chapter is devoted to proving Theorem \ref{thm:asph_invarvol}.

\subsection[Parabolic subgroups  of $\SL_{2}(\RR)$ and related groups]{Parabolic subgroups and uniform subgroups of\/ $\boldsymbol{\tilde \SL_{2}(\RR)}$}

We consider the subgroups  $A, N\subset\SL_{2}(\RR)$  of diagonal matrices
with positive entries and  of unipotent upper-triangular
matrices, respectively.
Let
\[
\tilde \SL_{2}(\RR)\to\SL_2(\RR)
\]
be the universal covering group of $\SL_{2}(\RR)$.
Note that the kernel  of this  map
is an index two subgroup of
the center $\mathsf{Z}$ of $\tilde \SL_{2}(\RR)$,  
and $\mathsf{Z}$ is a subgroup of $\tilde{K}$, where 
$\tilde{K}$ is the preimage of the subgroup $K=\SO_2$. 
Every connected proper subgroup of $\tilde{\SL}_2(\RR)$ is
conjugate to one of $\tilde{K},A,N$ or $AN$, and there is an \emph{Iwasawa decomposition} of the form 
\[
\tilde \SL_{2}(\RR) = \tilde{K} \cdot A  N.
\]  
Our arguments will be based on:

\begin{lem}\label{lem:SL2_dec}
Let $H$ be a uniform subgroup of\/ $\tilde{\SL}_2(\RR)$ such
that $H$ contains a non-trivial connected solvable normal subgroup.
Then:
\begin{enumerate}
\item
The identity component $H_{\circ}$ of $H$ is conjugate to $N$ or $AN$.
\item
The quotient space $\tilde{\SL}_2(\RR)/H$ has no Borel measure which
is invariant by\/ $\tilde{\SL}_2(\RR)$.  
\end{enumerate}
\end{lem}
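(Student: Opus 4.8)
Write $\tilde{G}$ for $\tilde\SL_{2}(\RR)$, let $q\colon\tilde{G}\to\SL_{2}(\RR)$ be the covering homomorphism, and let $H_{\circ}$ be the identity component of $H$. The plan for part (1) is to place $H_{\circ}$ among the connected subgroups of $\tilde{G}$ listed above and to eliminate all but $N$ and $AN$ using that $H$ is uniform. First, the given non-trivial connected solvable normal subgroup of $H$ is contained in $H_{\circ}$, so $H_{\circ}\neq\one$; and $H_{\circ}\neq\tilde{G}$, for otherwise $H=\tilde{G}$ would contain a non-trivial connected solvable normal subgroup, which is impossible since $\sl_{2}(\RR)$ is simple and so has no non-zero solvable ideal. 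Hence $H_{\circ}$ is conjugate to one of $\tilde{K},A,N,AN$. Since $H_{\circ}$ is characteristic in $H$, we have $H\leq\Nor_{\tilde{G}}(H_{\circ})$, so compactness of $\tilde{G}/H$ forces $\tilde{G}/\Nor_{\tilde{G}}(H_{\circ})$ to be compact as well. I would then compute these normalizers: $\Nor_{\tilde{G}}(\tilde{K})=\tilde{K}$, since $\SO_{2}$ is self-normalizing in $\SL_{2}(\RR)$, and $\tilde{G}/\tilde{K}\cong\SL_{2}(\RR)/\SO_{2}$ is the non-compact hyperbolic plane; $\Nor_{\tilde{G}}(A)=q^{-1}(\Nor_{\SL_{2}(\RR)}(A))$, so $\tilde{G}/\Nor_{\tilde{G}}(A)\cong\SL_{2}(\RR)/\Nor_{\SL_{2}(\RR)}(A)$, a finite quotient of the non-compact space $\SL_{2}(\RR)/A$; whereas $\Nor_{\SL_{2}(\RR)}(N)=\Nor_{\SL_{2}(\RR)}(AN)=B:=AN$, so $\Nor_{\tilde{G}}(N)=\Nor_{\tilde{G}}(AN)=q^{-1}(B)$ and $\tilde{G}/q^{-1}(B)\cong\SL_{2}(\RR)/B\cong\RR P^{1}$ is compact. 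This leaves exactly the two possibilities asserted in (1).

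For part (2), after replacing $H$ by a conjugate we may assume $H_{\circ}\in\{N,AN\}$, so that $H\leq\tilde{P}:=q^{-1}(B)$. The crux is to show that $H$ is not unimodular. For $h\in\tilde{P}$, write $q(h)=\mathrm{diag}(\lambda,\lambda^{-1})\,u$ with $\lambda>0$ and $u\in N$; since $q$ is a local isomorphism, $\Ad_{\frh}(h)$ can be computed in $\sl_{2}(\RR)$, and a direct calculation gives $\det\Ad_{\frh}(h)=\lambda^{2}$ in both cases $\frh=\frn$ and $\frh=\frb$. Hence the modular character of $H$ is $\Delta_{H}(h)=\lambda(h)^{2}$. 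Were $H$ unimodular, this would force $\lambda\equiv1$ on $H$, i.e.\ $q(H)\subseteq N$, i.e.\ $H\leq q^{-1}(N)$; but $\tilde{G}/q^{-1}(N)\cong\SL_{2}(\RR)/N\cong\RR^{2}\setminus\{0\}$ is non-compact, contradicting that $H$ is uniform. So $\Delta_{H}\not\equiv1$. Since $\sl_{2}(\RR)$ is simple, $\tilde{G}$ is unimodular, and then by the standard criterion for the existence of invariant measures on homogeneous spaces (Raghunathan \cite[1.4 Lemma]{raghunathan}), $\tilde{G}/H$ carries no non-zero $\tilde{G}$-invariant Borel measure.

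I expect the main effort to be bookkeeping rather than anything conceptual: correctly identifying the normalizers $\Nor_{\tilde{G}}(H_{\circ})$ inside the simply connected group — using that $A$, $N$, $AN$ are simply connected, so their $q$-preimages have them as identity components, while $\SO_{2}\hookrightarrow\SL_{2}(\RR)$ induces an isomorphism on fundamental groups, so $\tilde{K}=q^{-1}(\SO_{2})$ is connected — and verifying the non-compactness of $\SL_{2}(\RR)/\SO_{2}$, $\SL_{2}(\RR)/\Nor_{\SL_{2}(\RR)}(A)$ and $\SL_{2}(\RR)/N$, together with the two routine $\det\Ad$ computations. It is worth noting that the solvability hypothesis on the normal subgroup of $H$ enters only in part (1), and only to exclude the case $H_{\circ}=\tilde{G}$; every other step in both parts uses merely that $H$ is uniform.
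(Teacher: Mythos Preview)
Your argument is correct and follows the same route as the paper: for (1) you use that $H\leq\Nor_{\tilde G}(H_\circ)$ and check which connected subgroups have cocompact normalizer, and for (2) you compute the modular character of $H$ and invoke the standard criterion for invariant measures. The paper compresses (1) into a single sentence and writes $\Delta_H=|\det\Ad_{\frn}|$ directly, but the substance is identical.

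One small bookkeeping slip: the normalizer of $N$ (and of $AN$) in $\SL_2(\RR)$ is the full upper-triangular group $\{\pm I\}\cdot AN$, not just $AN$; correspondingly $\Nor_{\tilde G}(N)=\Nor_{\tilde G}(AN)=\mathsf{Z}\cdot AN$ (the paper's formulation), which contains your $q^{-1}(AN)$ with index $2$. This does not affect your conclusions, since the relevant quotients change only by a finite cover, but in part (2) the containment should read $H\leq\mathsf{Z}\cdot AN$ and, if $\Delta_H\equiv 1$, then $q(H)\subseteq\{\pm I\}\cdot N$; the non-compactness of $\tilde G/(\mathsf{Z}\cdot N)$ still yields the desired contradiction.
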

\begin{proof} 
Evidently, $N$ or $A N$ are the only subgroups of $\tilde{\SL}_2(\RR)$ whose normalizer is uniform. Indeed,  then $H$ is contained in $\mathsf{Z} \cdot AN$.  This proves (1).

Using (1), we compute the modular character 
$\Delta_H: H  \to \RR^{>0}$ of $H$ as
\[
\Delta_H =|\det \Ad_{\frh}| 
=|\det\Ad_{\frn}| \; .
\]
The kernel of $\Delta_H$ is therefore contained in 
$\mathsf{Z} \cdot N$. Since $H$ is uniform in $\mathsf{Z} \cdot AN$,
there exists $h \in H$ with $\Delta_H(h) \neq 1$. Recall that 
${\SL}_2(\RR)$ is a unimodular Lie group.
This shows that
$\Delta_H \not \equiv \Delta_{\tilde{\SL}_2(\RR)}|_{H} \equiv 1$.
Therefore, $\tilde{\SL}_2(\RR)/H$ has no  finite invariant Borel
measure.
\end{proof}


If $S$ is locally isomorphic to  $\SL_2(\RR)^{\ell}$ then a
connected  subgroup is called \emph{minimal parabolic} if it is
locally isomorphic to a conjugate of the subgroup $(AN)^{\ell}$.
Moreover, a connected subgroup 
$P \leq S$  is called \emph{parabolic} if $P$ contains a minimal 
parabolic subgroup. 

\subsection{Proof of Theorem \ref{thm:asph_invarvol}} 

\begin{lem}\label{lem:SmodC}
Let $C \leq S$ be a uniform subgroup such that the identity component $C_{\circ}$ is solvable. Then: 
\begin{enumerate}
\item $C_{\circ}$ is contained in a minimal parabolic subgroup of $S$.
\item If $S/C$ has a finite Borel measure which is invariant by $S$ then $C$ is discrete. 
\end{enumerate}
\end{lem}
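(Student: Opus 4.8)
The plan is to reduce the statement, which concerns $S \cong \tilde\SL_2(\RR)^\ell$, to the single-factor case treated in Lemma \ref{lem:SL2_dec}, using the product structure of $S$. First I would establish (1). Write $S = S_1 \times \cdots \times S_\ell$ with each $S_i \cong \tilde\SL_2(\RR)$, and let $\pi_i \colon S \to S_i$ be the projections. Since $C$ is uniform in $S$ and $C_\circ$ is solvable, I want to show each $\pi_i(C_\circ)$ is contained in a conjugate of $(AN)$ in $S_i$. The key point is that the Zariski closure (or equivalently, the smallest algebraic subgroup containing it, passing to $\PSL_2(\RR)^\ell$) of the uniform subgroup $C$ projects onto a subgroup of $S_i$ whose identity component is either all of $S_i$ or solvable; if it were all of $S_i$ then, combined with uniformity and solvability of $C_\circ$, one derives a contradiction with the fact that a uniform subgroup of $\tilde\SL_2(\RR)$ with non-solvable identity component cannot have a uniform complement behaving well — more directly, $\overline{C}$ (Euclidean closure) is a closed subgroup containing $C$ cocompactly, hence $\overline{C} = S$ would force $C$ cocompact in $S$ which is fine, but then $C_\circ = \overline{C}_\circ = S$ is not solvable, contradiction. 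So $\overline{C}_\circ$ is a proper connected solvable subgroup of $S$ that is normalized by the uniform subgroup $C$; by the argument in Lemma \ref{lem:SL2_dec}(1) applied factorwise (the normalizer of a proper connected solvable subgroup of $\tilde\SL_2(\RR)$ is uniform only in the cases $N$, $AN$), each $\pi_i(\overline{C}_\circ)$ lies in a conjugate of $AN$, so $C_\circ \subseteq \overline{C}_\circ$ lies in a conjugate of $(AN)^\ell$, i.e.\ in a minimal parabolic subgroup of $S$. This proves (1).

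For (2), suppose $S/C$ carries an $S$-invariant finite Borel measure. By the criterion used in Lemma \ref{lem:probmeasure} (Raghunathan \cite[1.4 Lemma]{raghunathan}), this forces $\Delta_C = \Delta_S|_C$; but $S$ is semisimple, hence unimodular, so $\Delta_S \equiv 1$ and therefore $C$ must be unimodular, $\Delta_C \equiv 1$. Now I use (1): $C_\circ$ is conjugate into $(AN)^\ell$, and I compute $\Delta_C = |\det \Ad_{\frc}|$, where $\frc = \mathrm{Lie}(C_\circ)$ is a solvable subalgebra of $(\fra \oplus \frn)^\ell$. The hard part, and the main obstacle, is to show that if $C_\circ$ is non-trivial then $\Delta_C \not\equiv 1$: one must rule out the possibility that $C$ is unimodular despite having positive-dimensional identity component sitting inside $(AN)^\ell$. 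The idea is that $C$ uniform in $S$ and contained in (a conjugate of) $\mathsf{Z}^\ell \cdot (AN)^\ell$ forces $C$ to surject, up to finite index, onto the "$A$-part'' in each factor in a suitable sense — more precisely, projecting to $(AN)^\ell / \mathsf{N}$ where $\mathsf{N} = N^\ell$, cocompactness of $C$ in $S$ forces the image of $C$ to be cocompact in $(AN)^\ell/N^\ell \cong A^\ell \cong \RR^\ell$, hence a lattice, hence there is $h \in C$ whose $A$-component is nontrivial; then as in Lemma \ref{lem:SL2_dec}(2), conjugation by such an $h$ scales the $N$-directions nontrivially, giving $\Delta_C(h) \neq 1$. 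This contradicts unimodularity of $C$ unless the $N$-part of $\frc$ — equivalently all of $\frc$, since the $A$-part contributes trivially to $\det\Ad$ as $A$ acts trivially on itself and $C_\circ \cap A^\ell$ is normalized — is zero; carrying this bookkeeping out carefully, using that the only obstruction is the interplay between the nilpotent directions present in $\frc$ and the diagonal elements of $C$, shows $C_\circ$ must be trivial, i.e.\ $C$ is discrete.

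I expect the delicate point to be exactly the last step: showing that unimodularity of a uniform $C$ with $C_\circ \subseteq (AN)^\ell$ forces $C_\circ = \one$. One clean way to package it: pass to $\bar S = \PSL_2(\RR)^\ell$ and the image $\bar C$; then $\bar C_\circ$ is a connected solvable subgroup of $(AN)^\ell$, hence of the form (after conjugation) a subgroup determined by a subspace of $\fra^\ell \oplus \frn^\ell$, and one analyses the character $\Delta_{\bar C}$ on the full group $\bar C$ using that $\bar C$ normalizes $\bar C_\circ$ and is cocompact; the normalizer of any such proper connected solvable subgroup that is cocompact forces, via Lemma \ref{lem:SL2_dec} applied in each factor, that $\bar C$ acts on the nilpotent directions of $\bar C_\circ$ with some eigenvalue $\neq 1$, so $\Delta_{\bar C} \not\equiv 1$ as soon as those nilpotent directions are nonzero; and if they are zero then $\bar C_\circ \subseteq A^\ell$ is normalized by the uniform $\bar C$, which again by Lemma \ref{lem:SL2_dec}-type reasoning (the normalizer of a nontrivial subgroup of $A$ in $\PSL_2(\RR)$ is not cocompact) is impossible unless $\bar C_\circ = \one$. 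Either way $\bar C_\circ = \one$, so $C_\circ \subseteq \mathsf{Z}^\ell$ is central and discrete, hence $C$ is discrete. This completes the proof.
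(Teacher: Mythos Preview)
Your argument for part (1) is essentially the paper's: project to each factor and invoke Lemma \ref{lem:SL2_dec}(1).

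For part (2), however, the paper takes a much shorter route that you have overlooked. Rather than computing the modular character of $C$ directly, the paper pushes the finite $S$-invariant measure on $S/C$ forward along the equivariant map $S/C \to \tilde{\SL}_2(\RR)/H_i$, where $H_i$ is the closure of $\pi_i(C)$ in the $i$th factor. This produces a finite $\tilde{\SL}_2(\RR)$-invariant measure on $\tilde{\SL}_2(\RR)/H_i$. Now the closure of $\pi_i(C_\circ)$ is a connected solvable normal subgroup of the uniform subgroup $H_i$; by the contrapositive of Lemma \ref{lem:SL2_dec}(2) it must be trivial. Hence $\pi_i(C_\circ)=\one$ for every $i$, and so $C_\circ=\one$. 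No modular-character bookkeeping on the product is needed.

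Your modular-character route, by contrast, has genuine gaps as written. First, you assert that $C$ lies in (a conjugate of) $\mathsf{Z}^\ell\cdot (AN)^\ell$, but part (1) gives this only for $C_\circ$; the full group $C$ merely normalises $C_\circ$, and in any factor where $\pi_i(C_\circ)$ is trivial the projection $\pi_i(C)$ can be an arbitrary lattice in $\tilde{\SL}_2(\RR)$, so your cocompactness claim in $(AN)^\ell/N^\ell$ does not follow. Second, your final dichotomy ``nilpotent directions of $\bar C_\circ$ non-zero'' versus ``$\bar C_\circ\subseteq A^\ell$'' is not exhaustive: a one-dimensional $\bar\frc$ spanned by a vector with an $\fra$-component in one factor and an $\frn$-component in another falls in neither case. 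Third, even in the cases you name, you need an element $h\in C$ (not merely in a projection or a closure) with $\Delta_C(h)\neq 1$, and this requires controlling the interaction between factors, which you have not done. The push-forward argument sidesteps all of this by reducing to a single factor before invoking the unimodularity obstruction already packaged in Lemma \ref{lem:SL2_dec}(2).
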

\begin{proof} We may consider the projection of $C$ to the factors of $S$. Applying (1) of Lemma \ref{lem:SL2_dec} then implies that $C_{\circ}$ is contained in a minimal parabolic subgroup of $S$.  This shows (1). 

Consider any projection of $C$ to one of the simple factors $\tilde{\SL}_2(\RR)$ of $S$. The image of $C$ is contained in a uniform 
subgroup $H$ in $\tilde{\SL}_2(\RR)$, and we obtain an 
equivariant map $S/C \to \tilde{\SL}_2(\RR)/H$. Furthermore, we may push forward the invariant measure on $S/C$ to $\tilde{\SL}_2(\RR)/H$. By the second part of Lemma \ref{lem:SL2_dec}, we conclude that the projection of $C_\circ$, which is a normal subgroup in $H$, must be trivial. This implies that $C_{\circ}$ is trivial.
\end{proof}

\begin{prop}\label{prop:pHisdiscrete}
If $\p(H_{\circ})$ is solvable, then $\p(H)$ is discrete in $S$. 
\end{prop}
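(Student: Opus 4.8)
The plan is to reduce the statement to the measure-theoretic nonexistence results for quotients of $\tilde\SL_2(\RR)$ established in Lemma \ref{lem:SL2_dec} and Lemma \ref{lem:SmodC}. Write $L = R \rtimes S$ as before, with $\p : L \to S$ the projection. The key observation is that $\p(H_\circ)$ is a connected subgroup of $S$ which is normalized by $\p(H)$, since $H_\circ \trianglelefteq H$. First I would show that $\p(H)$ is a uniform subgroup of $S$: the orbit map gives a fibration $L/H \to S/\p(H)$ with fibre the image of $R$, and since $M = L/H$ is compact and the fibre $R/(R\cap H)$ is closed, the base $S/\p(H)$ is compact, so $\p(H)$ is uniform in $S$. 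Moreover, pushing forward the finite $L$-invariant measure on $M$ along this fibration yields a finite $S$-invariant Borel measure on $S/\p(H)$.

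Next I would invoke the hypothesis that $\p(H_\circ)$ is solvable. Let $C$ be the closure $\overline{\p(H)}$ of $\p(H)$ in $S$; then $C$ is a uniform subgroup of $S$, and its identity component $C_\circ$ contains the connected solvable normal subgroup $\p(H_\circ)$ of $\p(H)$. I want to argue $C_\circ$ itself is solvable so that Lemma \ref{lem:SmodC} applies. Since $\p(H)$ normalizes the connected solvable group $\p(H_\circ)$, the closure $C$ normalizes it as well, hence $C_\circ$ normalizes $\p(H_\circ)$; combined with the structure of $S \cong \tilde\SL_2(\RR)^\ell$ and the fact that the only connected subgroups of $\tilde\SL_2(\RR)$ with uniform normalizer are (conjugates of) $N$ and $AN$ — as used in the proof of Lemma \ref{lem:SL2_dec} — one sees projection-by-projection that $C_\circ$ is contained in a minimal parabolic subgroup $(AN)^\ell$ (up to conjugacy), and in particular is solvable.

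With $C_\circ$ solvable, Lemma \ref{lem:SmodC}(2) applies to the uniform subgroup $C \leq S$ carrying the finite $S$-invariant measure on $S/C$ obtained by further pushing forward the measure on $S/\p(H)$ along $S/\p(H) \to S/C$: it forces $C$ to be discrete. Since $\p(H) \subseteq C$ and $C$ is discrete, $\p(H)$ is discrete in $S$, which is the claim.

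The main obstacle I expect is the bookkeeping in the middle step: passing from ``$\p(H_\circ)$ solvable'' to ``$C_\circ$ solvable'' requires handling the closure carefully (a priori $C_\circ$ could be larger than $\p(H_\circ)$) and correctly translating the normalizer condition through the product decomposition $S \cong \tilde\SL_2(\RR)^\ell$ — one must rule out, for each simple factor, that the projection of $C_\circ$ is all of $\tilde\SL_2(\RR)$ or a conjugate of $\tilde K$, which is exactly where the uniformity of the normalizer (equivalently, the existence of the finite invariant measure) is used. Once that is in place, the conclusion is immediate from Lemma \ref{lem:SmodC}.
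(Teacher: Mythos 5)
Your route is the paper's own: form the closure $C$ of $\p(H)$ in $S$, note that it is uniform because $L/H$ is compact, push the finite invariant measure forward to $S/C$, and invoke Lemma \ref{lem:SmodC}(2) to get that $C$, hence $\p(H)$, is discrete. The paper's proof consists of exactly these moves. The one place where you go beyond the paper is your attempt to verify the hypothesis of Lemma \ref{lem:SmodC}, namely that $C_{\circ}$ is solvable, and that middle step, as you sketch it, has a genuine gap. The normalizer argument constrains only those simple factors of $S$ onto which $\p(H_{\circ})$ projects non-trivially: there, the closure of the projection of $\p(H_{\circ})$ is a non-trivial connected solvable subgroup with uniform normalizer, hence conjugate to $N$ or $AN$, and the projection of $C$ lands in its normalizer, which is solvable-by-center. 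But if $\p(H_{\circ})$ projects trivially to some factor --- in particular if $\p(H_{\circ})$ is trivial, which is allowed by the hypothesis ``$\p(H_{\circ})$ solvable'' and is precisely the situation in the paper's final application of this proposition, where $H_{\circ}\subset R$ --- the condition ``$C_{\circ}$ normalizes $\p(H_{\circ})$'' is vacuous and gives no constraint at all on the corresponding projection of $C_{\circ}$. The ``only $N$ and $AN$ have uniform normalizer'' fact applies to the subgroup being normalized, not to $C_{\circ}$ itself, so nothing in your argument rules out that a projection of $C_{\circ}$ is all of $\tilde\SL_2(\RR)$.

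Put differently: what the pushed-forward measure rules out, via Lemma \ref{lem:SL2_dec}(2) applied to the closures of the projections, is that the connected solvable normal subgroup $\overline{\p(H_{\circ})}$ of $C$ is non-trivial; to upgrade this to ``$C$ is discrete'' one needs the solvability of $C_{\circ}$ as an input (an Auslander-type statement about the identity component of the closure of the image of $H$ in the semisimple quotient), and your normalizer argument does not deliver it when $\p(H_{\circ})$ has trivial image in some factor. For comparison, the paper does not attempt this deduction: its proof records that $C$ contains the connected solvable subgroup $\p(H_{\circ})$ as a normal subgroup and then applies Lemma \ref{lem:SmodC} directly, so your instinct that the hypothesis on $C_{\circ}$ deserves an argument is sound; but the argument you propose only covers the case in which $\p(H_{\circ})$ has non-trivial image in every simple factor, and so does not close the case actually needed in the proof of Theorem \ref{thm:asph_invarvol}.
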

\begin{proof}
Since $H$ is a uniform subgroup of $L$, the closure $C$ of 
$\p(H)$ is a uniform subgroup in $S$. Note that 
$C$ contains the closed 
subgroup $\p(H_{\circ})$ as a normal subgroup.
Moreover, $S/C$ has a finite $S$-invariant measure.
So Lemma \ref{lem:SmodC} applies and shows that $C$ is
discrete. Hence, the subgroup $\p(H) \subset C$ is discrete.
\end{proof}

We shall also need: 

\begin{lem}\label{lem:algebraiclemma2}
Let $\frl$ be a Lie algebra with Levi decomposition
$\frl=\frs\ltimes\frr$, where $\frr$ is the solvable radical of
$\frl$ and $\frs$ a Levi subalgebra.
Furthermore, let $\frn\subset\frr$ denote the nilradical of $\frr$.
For an ideal $\frs_1$ in $\frs$, let
$\frb$ denote the ideal in $\frn$ generated by $[\frs_1,\frn]$.
Then $\frb$ is an ideal in $\frl$.
\end{lem}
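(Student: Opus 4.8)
\textbf{Proof plan for Lemma \ref{lem:algebraiclemma2}.}
The plan is to show directly that $\frb$ is stable under bracketing with each of the three pieces $\frs$, a Levi complement of $\frs_1$ inside $\frs$, and $\frr$, using in an essential way that $\frn$ is a \emph{characteristic} ideal of $\frr$ and that $\frs_1 \trianglelefteq \frs$. First I would record the generating description: $\frb = [\frs_1,\frn] + [\frn,[\frs_1,\frn]] + \cdots$, i.e.\ $\frb$ is the smallest $\frn$-submodule of $\frn$ containing $[\frs_1,\frn]$; since $\frn$ is nilpotent this stabilizes after finitely many steps. To see $\frb$ is an ideal of $\frl$ it suffices to check $[\frx,\frb]\subseteq\frb$ for $\frx \in \{\frs_1, \frs, \frr\}$, because these span $\frl$ and $\frb\subseteq\frn\subseteq\frr$.

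The routine parts are: (i) $[\frn,\frb]\subseteq\frb$ by construction; (ii) $[\frr,\frb]\subseteq\frb$, because $[\frr,\frn]\subseteq\frn$ and in fact $\ad(Y)$ for $Y\in\frr$ preserves $\frb$ — one checks this on the generating set $[\frs_1,\frn]$ via the Jacobi identity, $[Y,[\frs_1,\frn]] \subseteq [[Y,\frs_1],\frn] + [\frs_1,[Y,\frn]] \subseteq [\frs_1,\frn] + [\frs_1,\frn]\subseteq\frb$ using $[Y,\frs_1]\subseteq[\frr,\frs]\subseteq\frr$ and more precisely $[\frr,\frs]\subseteq\frn$ (the radical acts nilpotently on $\frl$ modulo... actually $[\frs,\frr]\subseteq\frn$ always), so the first term is again inside $[\frn,\frn]\subseteq\frn$ and one folds it into $\frb$ after re-expanding; then extend from generators to all of $\frb$ by induction on the $\frn$-module filtration. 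The key step is showing $\ad(X)\frb\subseteq\frb$ for a general $X\in\frs$. Write $\frs = \frs_1 \oplus \frs_2$ as a direct sum of ideals (possible since $\frs$ is semisimple and $\frs_1\trianglelefteq\frs$). For $X\in\frs_1$ this is immediate from the generating description. For $X\in\frs_2$ one uses $[\frs_2,\frs_1]=0$ and Jacobi: $[X,[\frs_1,\frn]]\subseteq[[X,\frs_1],\frn]+[\frs_1,[X,\frn]] = [\frs_1,[X,\frn]]\subseteq[\frs_1,\frn]\subseteq\frb$, since $[X,\frn]\subseteq\frn$ as $\frn$ is characteristic in $\frr$ (hence an ideal of $\frl$). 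Then propagate through the higher brackets $[\frn,[\frs_1,\frn]]$ etc.\ by the same Jacobi manipulation, at each stage landing back in an earlier term of the filtration plus $\frb$.

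The main obstacle — really the only subtlety — is the bookkeeping in this induction: when one applies Jacobi to $[X,[Z,w]]$ with $Z\in\frn$ and $w$ in a lower piece of the filtration, the term $[[X,Z],w]$ involves $[X,Z]\in\frn$, which need not itself lie in $\frb$, so one cannot simply quote the inductive hypothesis for that factor; instead one must observe that $[[X,Z],w]$ is a bracket of an element of $\frn$ with an element of $\frb$, hence lies in $[\frn,\frb]\subseteq\frb$ by step (i), while $[Z,[X,w]]$ is handled by the inductive hypothesis on $w$. Once this pattern is isolated, a clean induction on the length of iterated brackets defining the filtration of $\frb$ closes the argument. Putting (i), (ii) and the two cases $X\in\frs_1$, $X\in\frs_2$ together, $\ad(\frl)\frb\subseteq\frb$, so $\frb$ is an ideal of $\frl$.
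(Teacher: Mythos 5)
Your overall strategy is the same as the paper's: verify $\ad$-invariance of $\frb$ on the generating set $[\frs_1,\frn]$ via the Jacobi identity, then propagate through the filtration of the ideal of $\frn$ generated by $[\frs_1,\frn]$, absorbing the terms $[[X,Z],w]$ into $[\frn,\frb]\subseteq\frb$. Your treatment of $X\in\frs$ is fine (the paper handles all of $\frs$ at once using $[\frs,\frs_1]\subseteq\frs_1$ instead of splitting $\frs=\frs_1\oplus\frs_2$, but that is immaterial), and you correctly isolate the bookkeeping in the induction. The genuine gap is in step (ii), the invariance under $\frr$, at exactly the one non-routine point. For $Y\in\frr$, $S_1\in\frs_1$, $N\in\frn$ you expand $[Y,[S_1,N]]=[[Y,S_1],N]+[S_1,[Y,N]]$; the second term lies in $[\frs_1,\frn]\subseteq\frb$, but for the first term you only record $[Y,S_1]\in[\frr,\frs]\subseteq\frn$, which places $[[Y,S_1],N]$ in $[\frn,\frn]$ — and $[\frn,\frn]$ is \emph{not} contained in $\frb$ in general ($\frb$ is only the ideal generated by $[\frs_1,\frn]$, which can be small when $\frs_1$ acts trivially on part of $\frn$). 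Your proposed remedy, ``fold it into $\frb$ after re-expanding,'' is circular: re-expanding $[[Y,S_1],N]$ by Jacobi returns $[Y,[S_1,N]]-[S_1,[Y,N]]$, i.e.\ the very term you are trying to control.

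What is missing is the identity $[\frs_1,\frr]=[\frs_1,\frn]$, which is the first line of the paper's proof and the only place where semisimplicity of $\frs_1$ is genuinely used. Since $\frs_1$ is semisimple, $\frr$ decomposes as an $\frs_1$-module into the invariants plus $[\frs_1,\frr]$, and $[\frs_1,\frr]\subseteq\frn$ because each $\ad(S_1)|_{\frr}$ is a derivation of the solvable algebra $\frr$ and hence maps into its nilradical (Remark \ref{rem_der}); hence $[\frs_1,\frr]=[\frs_1,\frn]$. Writing $Y=C+N_1$ with $C$ an $\frs_1$-invariant and $N_1\in\frn$, one gets $[Y,S_1]=[N_1,S_1]\in[\frs_1,\frn]\subseteq\frb$, so the problematic term satisfies $[[Y,S_1],N]\in[\frb,\frn]\subseteq\frb$. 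With this ingredient inserted, your base case closes and the rest of your filtration induction goes through as you describe.
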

\begin{proof}
First, recall that $[\frs_1,\frr]=[\frs_1,\frn]$, since $\frs_1$ acts reductively on $\frr$ and it acts trivially on $\frr/\frn$  
(see Remark \ref{rem_der}).
Let $X=[S_1,N]$, where $S_1\in\frs_1$,
$N\in\frn$, and let $D\in\frr$.
Then there exists $N_1\in\frn$ such that $[D, S_1] = [N_1, S_1]$. Therefore, 
\begin{align*}
[D,X] &= [D,[S_1,N]] = -[N,[D,S_1]] - [S_1,[N,D]] \\
&= -\underbrace{[N,[N_1,S_1]]}_{\in\frb}-\underbrace{[S_1,[N,D]]}_{\in[\frs_1,\frn]\subset\frb} \; .
\end{align*}
Thus $[\frr,[\frs_1,\frn]]\subset\frb$. Taking into account that
$\frb$ is an ideal in $\frn$,
we deduce that  $[\frr,\frb]\subset\frb$.
For all $S\in\frs$, $[S,\frs_1]\subset\frs_1$.
Hence
\[
[S,[S_1,N]]
=
-[S_1,[N,S]]-[N,[S_1,S]]\in[\frs_1,\frn].
\]
This again implies $[\frs,\frb]\subset\frb$.
Therefore, $\frb$ is an ideal in $\frl$.
\end{proof}

For the proof of Theorem \ref{thm:asph_invarvol}, let us first assume that $\p(H_{\circ})$ is solvable.
Thus Proposition \ref{prop:pHisdiscrete} implies that $\p(H)$ is a uniform lattice in $S$. In particular, $H_{\circ} \leq R$ and $H \cap R$ is a uniform subgroup in $R$. 

In the general case, if $\p(H_{\circ})$ projects onto a simple
factor $S_1$ of $S$, we can remove the factor $S_{1}$ from $L$.
The remaining subgroup of $L$ still acts transitively on $M$.
Iterating this procedure, we arrive at a subgroup $L'$ of $L$,
such that $\p(H_{\circ} \cap L')$ is solvable and $L'$ acts
transitively on $M$. Note that $R$ is contained in $L'$ by construction. By the first part of the proof, we see that $H \cap R$ is a uniform subgroup in $R$.

Let $N$ be the nilradical of $R$. Since $H \cap R$ is uniform in
$R$, $H \cap N$ is a uniform subgroup in $N$.
This shows that $N \cap H_\circ$ is a normal subgroup of $N$
(as was already known to  Malcev \cite{malcev}).

Let $S_1$ be a Levi subgroup of $H_\circ$. As follows from the 
above construction, $S_1$ is (conjugate to) a factor of $S$.  

Since $H$ normalizes the lattice subgroup
$(H \cap N)/(H_\circ \cap N)$, which does not admit any connected
group of automorphisms, it follows that, for all $h\in H_{\circ}$,
\[
\Ad(h)|_{N/(H_\circ \cap N)} = \id \; .
\]
In particular, this applies to all $h \in S_1 \subset H_{\circ}$.
Therefore, $[\frs_1, \frn]$ is contained in $\frh \cap \frn$,  
where $\frs_1, \frn, \frh$ denote the Lie algebras of $S_1, N$,
and $H$, respectively.

Let $\frb$ be the ideal in $\frn$ generated by $[\frs_1, \frn]$.
Since $\frh \cap \frn$ is an ideal in $\frn$, evidently,
$\frb \subset \frh \cap \frn$.
By Lemma \ref{lem:algebraiclemma2}, $\frb$ is an ideal in the
Lie algebra $\frl$ of $L$.
Since $\frb\subset\frh$ and $L$ acts almost effectively, we must have
$\frb=\zsp$.
Let $\frr$ be the Lie algebra of $R$. Since $\frs_1$ acts reductively
on $\frr$ and it acts trivially on $\frr/\frn$, 
\[
[\frs_1,\frr]=[\frs_1,\frn] \subset \frb = \zsp \; .
\]
So the subgroup $S_1$ of $H_\circ$ centralizes $R$ and is
therefore also normal in $L$.
Again, since $L$ acts almost effectively, we must have
$S_1 = \one$. In conclusion, we 
have that $H_{\circ }$ is contained in $R$. In particular, 
$H_{\circ }$ is solvable and by Proposition \ref{prop:pHisdiscrete},
$\p(H)$ is discrete in $S$.
This shows Theorem \ref{thm:asph_invarvol}. 
\section{Isometric presentations}
\label{sec_rigidity}

Let $\mathcal{P}=(G,\g_G,\Gamma,\phi)$ be a presentation for a
compact pseudo-Rie\-mannian mani\-fold $M$ by a Lie group with bi-invariant metric, and let $x_{0} = \phi(e\,  \Gamma)$ be the base point.  We note that, via $\phi$, the group $G$ acts on $M$ by isometries. Then a change of base point in $M$ from $x_{0}$ to $a\cdot x_{0}$, $a\in G$, corresponds to an isometry of presentations 
for $M$:

\begin{lem}\label{lem:changebase}

Let $a\in G$ and $\Gamma^a=a\Gamma a^{-1}$.
Then there exist a presentation
$\mathcal{P}^a=(G,\g_G,\Gamma^a,\phi^a)$
for $M$ which is isometric to $\mathcal{P}$ and
satisfies $\phi^a(e \, \Gamma^a) = a \cdot x_{0}$.
\end{lem}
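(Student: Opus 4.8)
The plan is to transport the isometry $\phi$ along a canonical isometry between the coset spaces $G/\Gamma$ and $G/\Gamma^a$, and then to exhibit the required comparison isomorphism of presentations as conjugation by $a$.

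First I would consider the right translation $g \mapsto g a^{-1}$ on $G$. Since $\g_G$ is bi-invariant, this map is an isometry of $(G,\g_G)$, and from the identity $g\gamma a^{-1} = (ga^{-1})(a\gamma a^{-1})$ it intertwines the right $\Gamma$-action on $G$ with the right $\Gamma^a$-action; hence it descends to a diffeomorphism $\Psi_a : G/\Gamma \to G/\Gamma^a$, $g\Gamma \mapsto g a^{-1}\Gamma^a$, which is equivariant for the left $G$-actions. Because the metrics on $G/\Gamma$ and $G/\Gamma^a$ are precisely those making the covering projections from $(G,\g_G)$ local isometries, $\Psi_a$ is an isometry of pseudo-Riemannian manifolds.

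Next I would set $\phi^a = \phi \circ \Psi_a^{-1} : G/\Gamma^a \to M$. As a composite of isometries it is an isometry, so $\mathcal{P}^a = (G,\g_G,\Gamma^a,\phi^a)$ is a presentation of $M$ (and, since both $\phi$ and $\Psi_a$ are $G$-equivariant, so is $\phi^a$). For the base point: $\Psi_a(a\Gamma) = a a^{-1}\Gamma^a = e\Gamma^a$, so $\Psi_a^{-1}(e\Gamma^a) = a\Gamma$, and therefore, using that $G$ acts on $M$ via $\phi$, $\phi^a(e\Gamma^a) = \phi(a\Gamma) = a\cdot\phi(e\Gamma) = a\cdot x_{0}$, which is the required normalisation.

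It remains to check that $\mathcal{P}^a$ is isometric to $\mathcal{P}$. Here I would take the inner automorphism $c_a : G \to G$, $g \mapsto a g a^{-1}$: it is an isomorphism of Lie groups, it carries $\Gamma$ onto $\Gamma^a$ by the definition of $\Gamma^a$, and — again because $\g_G$ is bi-invariant — it is a composite of a left and a right translation, hence an isometry of $(G,\g_G)$. Thus $c_a$ satisfies both defining conditions of an isometry of presentations, giving $\Psi = c_a : \mathcal{P} \to \mathcal{P}^a$. (As a consistency check one may verify that the isometry of $M$ induced by $c_a$ is left translation by $a$, which is the geometric meaning of the statement.) I do not expect a genuine obstacle in this argument; the only point needing care is the bookkeeping of which translations descend to which quotient, together with invoking bi-invariance of $\g_G$ exactly where it is used, namely to know that right translations, and hence $c_a$, act by isometries on $G$ and therefore on the quotients.
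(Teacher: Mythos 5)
Your proof is correct and takes essentially the same route as the paper: in both cases the isometry of presentations is conjugation $g\mapsto aga^{-1}$ (an isometry of $(G,\g_G)$ by bi-invariance, carrying $\Gamma$ to $\Gamma^a$), and your $\phi^a=\phi\circ\Psi_a^{-1}$ is literally the same map as the paper's $\lambda_a\,\phi\,\bar{\Psi}_a^{-1}$, since both send $g\,\Gamma^a\mapsto\phi(ga\,\Gamma)$ --- you merely factor it through the descended right translation instead of through the isometry $\lambda_a$ of $M$.
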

\begin{proof}
Let $\lambda_{a}: M \to  M$, $x \mapsto a\cdot x$ be the
isometry of $M$ which belongs to $a$ with respect to
$\mathcal{P}$.
Consider the isomorphism
$\Psi_{a}:G\to G$, $g\mapsto aga^{-1}$.
Then clearly $\Psi_{a}(\Gamma)=\Gamma^a$, and since $\g_G$ is
bi-invariant, $\Psi_{a}:G\to G$ is an isometry for $\g_G$. 
Define $\phi^a = \lambda_{a} \phi \, \bar \Psi_{a}^{-1}: G/\, \Gamma^{a} \to M$. It follows that $\phi^a$ is an isometry with the required property, and $\Psi_{a}$ defines an isometry of presentations $\mathcal{P} \to \mathcal{P}^a$. 
\end{proof}


Let $\psi:M_1\to M_2$ be an isometry, $\psi(x_{1}) =x_{2}$, where
$x_{i} = \phi_i( e \, \Gamma_{i}) \in M_{i}$ are the base points.
Then there is an associated isomorphism of groups
\[
J_{\psi}:  \Iso(M_{1}) \to \Iso(M_{2})  \, ,\quad 
\sigma \mapsto \psi \sigma \psi^{-1}
\]
which maps $\Iso(M_{1})_{\circ}$ to $\Iso(M_{2})_{\circ}$.  
Since the simply connected groups $G_{i}$ act almost freely and by isometries on $M_{i}$, the natural maps 
\[
G_{i} \to \Iso(M_{i})_{\circ}
\]
have discrete kernels. Indeed, by Corollary \ref{mcor_isoM},
these maps are surjective, that is, they are covering homomorphisms. Let $$ \Psi: G_{1} \to G_{2}$$ be the unique lift of $J_{\psi}$  to an isomorphism of the universal covering groups $G_{i}$.  Then, clearly, $\Psi(\Gamma_{1}) = \Gamma_{2}$, and there is a map 
\[
\tilde{\Psi}: M_{1} \to M_{2}
\]
induced by $\Psi$. 
Moreover, for $g\in G_1$, we have
\begin{align*}
\tilde{\Psi}(g \cdot x_{1})
& =\Psi(g)\cdot x_{2} 
 =J_{\psi}(\lambda_g)(x_{2})
=\psi\lambda_g\psi^{-1}(x_{2})
=\psi\lambda_g\psi^{-1}(\psi(x_{1})) \\
&=\psi(g \cdot {x_{1}}).
\end{align*}
Hence, $\tilde{\Psi} = \psi$. 
In particular, since $\psi$ is an isometry, the isomorphism $\Psi: G_{1} \to G_{2}$
is an isometry of the pulled-back metrics $\g_{G_{i}}$.
Thus $\Psi$ defines an isometry of pre\-sen\-tations
$\mathcal{P}_{1} \to \mathcal{P}_{2}$, which induces $\psi$.
This proves the first part of Corollary \ref{mcor_rigidity}. 

Now let $\mathcal{P}_{i}$ be two presentations of $M$. After a change of base point in $M$ and
a corresponding isometric change of the presentation
$\mathcal{P}_{2}$ (as in Lemma \ref{lem:changebase}),
we can assume that $\mathcal{P}_{1}$ and $\mathcal{P}_{2}$
have the same base-point $x_{0}$. 
According to the first part of the proof, 
the identity of $M$, $\psi= \id_{M}$, lifts to an isometry $\mathcal{P}_{1} \to \mathcal{P}_{2}$.
This finishes the proof of  Corollary \ref{mcor_rigidity}.

%
%
%
%
%
%












\end{document}